\newtheorem{propo}{Proposition}
\newtheorem{lemma}[propo]{Lemma}
\newtheorem{remark}[propo]{Remark}
\newtheorem{theo}[propo]{Theorem}
\newtheorem{coro}[propo]{Corollary}
\newtheorem{ex}[propo]{Example}
\DeclareMathOperator{\sign}{sgn}
\DeclareMathOperator{\spn}{span}
\DeclareMathOperator{\conv}{conv}
\begin{document}
\title{Real eigenstructure of regular simplex tensors}
\author{Adam Czapli\'{n}ski\footnote{Universit\"at Siegen, Department Mathematik, Walter-Flex-Stra{\ss}e 3, 57068 Siegen, Germany. E-Mail: adam.czaplinski@uni-siegen.de},
Thorsten Raasch\footnote{Universit\"at Siegen, Department Mathematik, Walter-Flex-Stra{\ss}e 3, 57068 Siegen, Germany. E-Mail: raasch@mathematik.uni-siegen.de} and Jonathan Steinberg\footnote{Universit\"at Siegen, Department Physik, Walter-Flex-Stra{\ss}e 3, 57068 Siegen, Germany. E-Mail: steinberg@physik.uni-siegen.de}}
\maketitle

\begin{abstract}
We are concerned with the eigenstructure of supersymmetric tensors. Like in the matrix case, normalized tensor eigenvectors are fixed points of the tensor power iteration map. However, unless the given tensor is orthogonally decomposable, some of these fixed points may be repelling and therefore be undetectable by any numerical scheme. In this paper, we consider the case of regular simplex tensors whose symmetric decomposition is induced by an overcomplete, equiangular set of $n+1$ vectors from $\mathds R^n$. We discuss the full real eigenstructure of such tensors, including the robustness analysis of all normalized eigenvectors. As it turns out, regular simplex tensors exhibit robust as well as non-robust eigenvectors which, moreover, only partly coincide with the generators from the symmetric tensor decomposition.
\end{abstract}

\section{Introduction}
In the last decades, the mathematical analysis and the efficient numerical treatment of tensors, i.e., of multivariate data arrays with a typically large number of modes, has received much attention (see \cite{BBK18,SC14} and references therein). The scientific interest in tensor analysis has been driven by manifold applications, ranging from high-dimensional computational chemistry and physics \cite{CO00,GO03}, neuroscience \cite{GMS21}, algebraic statistics \cite{PS05,ST09} and computer vision \cite{GGCM21} to the algorithmic knowledge retrieval from large datasets.

Both theoretically and practically, a multimodal dataset can only be handled efficiently after imposing a certain structural representation which typically also encodes many of its geometric properties, like symmetries or other correlations between the tensor entries. In the case of supersymmetric tensors $\mathcal T\in(\mathds R^n)^d$, a popular storage format is the symmetric decomposition
\begin{equation}\label{eq:symmdecompintro}
\mathcal T=\sum_{k=1}^r \lambda_k\mathbf v_k^{\otimes d},
\end{equation}
being a linear combination of the $d$-fold outer products of certain $n$-dimensional, normalized vectors $\mathbf v_1,\ldots,\mathbf v_r$, with real weights $\lambda_1,\ldots,\lambda_r$.

The eigenvectors of a supersymmetric tensor $\mathcal T$ are of particular interest in many applications \cite{QCC18}. Inspired by the matrix case ($d=2$), it is a natural question whether there is a relationship between the normalized eigenvectors of a symmetric tensor $\mathcal T$ and the generating vectors $\mathbf v_k$ of its symmetric decomposition \eqref{eq:symmdecompintro}. Unfortunately, both sets of vectors usually only coincide if one imposes additional constraints on the vectors $\mathbf v_k$, like orthogonality. The resulting class of orthogonally decomposable (odeco) tensors is structurally rich (cf. \cite{RO16}), including the fact that all eigenvectors $\mathbf v_k$ of an odeco tensor are attractive fixed points of the tensor power iteration map which is used to numerically solve the tensor eigenvalue problem. Unfortunately, the variety of odeco tensors is of very small dimension, limiting its use in the analysis of generic datasets.

Recently, see \cite{ORS15}, significant progress has been made in the analysis of those symmetric tensors $\mathcal T$ whose generating vectors $\mathbf v_k$ constitute an overcomplete set of vectors which is yet sufficiently close to an orthonormal basis, e.g., a tight frame \cite{W18}. This set of frame decomposable (fradeco) tensors is significantly larger than the odeco class. However, the eigenvectors of a fradeco tensor usually deviate from the underlying frame elements, and it is a straightforward question to ask under which circumstances the eigenvectors of a fradeco tensor are numerically recoverable by the tensor power iteration.

Our work is partly motivated by the recent paper \cite{MRU21}. In loc. cit. it was shown that if $\mathbf v_k$ is an eigenvector with non-zero eigenvalue and if the order $d$ of the tensor is sufficiently large, this very eigenvector is robust. Moreover, some sufficient conditions were given which imply that $\mathbf v_k$ is indeed an eigenvector. However, this does not reveal the full eigenstructure of $\mathcal{T}$. As already pointed out in \cite{MRU21}, a fradeco tensor $\mathcal{T}$ might have further eigenvectors different from any of the $\mathbf v_k$. Even worse, some of these eigenvectors may not be robust and therefore not detectable by any numerical scheme.

In this paper, we will focus on the special case of regular simplex tensors whose symmetric decomposition \eqref{eq:symmdecompintro} uses equal weights $\lambda_k=1$ and is induced by an overcomplete, equiangular set of $n+1$ vectors $\mathbf v_k$ from $\mathds R^n$. Based on a reformulation of the tensor eigenvector equation as an algebraic system of equations in the barycentric coordinates of the eigenvector with respect to the frame elements $\mathbf v_k$, we will develop a full analysis of the eigenstructure of a regular simplex tensor with local dimension $n\ge 2$ and an arbitrary number $d\ge 2$ of modes. In order to assess the performance of the tensor power iteration, we will then also study the robustness of all normalized eigenvectors in the cases $n=2$ and $n=3$.

As our analysis shows, apart from some low-dimensional special cases, a regular simplex tensor does have eigenvectors which only partly coincide with the generators of its symmetric tensor decomposition, thereby falsifying Conjecture 4.7 of \cite{MRU21}. If the number of modes $d$ is odd, there are eigenvectors with eigenvalue zero, which correlates with the redundancy of the underlying frame. If $d$ is even, a regular simplex tensor has non-robust eigenvectors. 

Let us sketch the structure of the paper. We begin with Section \ref{prelandnot} in which we collect some basic notions and theorems. 
The next Section \ref{eigenpa} is the technical heart of the paper.
In the generic case $n\ge 2$ and $d\ge 2$, we first translate the eigenpair condition into an equivalent system of algebraic equations in the barycentric coordinates of an eigenvector with respect to the underlying frame. By exploiting the properties of the involved nonlinearities, we can then enumerate all possible normalized eigenpairs. Afterwards, we will discuss the special cases $n=2$ and $n=3$ in more detail.
In Section \ref{robustanalysis} we study the robustness of all normalized eigenvectors with respect to the tensor power fixed point iteration $\varphi$. In the special case $n=2$, we develop sharp estimates for the spectral radius of the Jacobian of $\varphi$ at each normalized eigenvector. Moreover, in the case $n=3$, we provide numerical experiments which cover the robustness of simplex tensor eigenvectors for a variety of mode numbers $d$.

\section{Preliminaries and notation}\label{prelandnot}
\subsection{Matrices and tensors}
A real-valued tensor of order $d\in\mathds N$ and with local dimensions $n_1,\ldots,n_d\in\mathds N$ is a $d$-variate data field $\mathcal T\in\mathds R^{n_1\times\cdots\times n_d}$. If the number of modes $d$ is equal to $1$ or to $2$, $\mathcal T$ is a column vector or a matrix, respectively, and will then be written in boldface notation, e.g. $\mathcal T=\mathbf v\in\mathds R^{n_1}$ or $\mathcal T=\mathbf A\in\mathds R^{n_1\times n_2}$. In the sequel, we will use the special vectors $\mathbf 1_n:=(1,\ldots,1)^\top\in\mathds R^n$, and $\mathbf e_k$ shall denote the $k$-th unit vector in $\mathds R^m$, if $m$ is clear from the context.

We will denote the $(i_1,\ldots,i_d)$-th entry of a tensor $\mathcal T$ by $\mathcal T_{i_1,\ldots,i_d}$, where $1\le i_j\le n_j$, $1\le j\le d$. If all mode dimensions $n_j$ are equal to $n\in\mathds N$, $\mathcal T\in\mathds R^{n\times\cdots\times n}$ is called cubic. Finally, a cubic tensor $\mathcal T\in\mathds R^{n\times\cdots\times n}$ is called (super)symmetric if for all permutations $\sigma$ of $\{1,\ldots,d\}$,
\begin{equation*}
\mathcal T_{i_1,\ldots,i_d}=\mathcal T_{i_{\sigma(1)},\ldots,i_{\sigma(d)}}.
\end{equation*}
The set of all symmetric tensors of order $d$ and local dimension $n$ shall be denoted by $S^d(\mathds R^n)$.

In the following, let the vector space $\mathds R^{n_1\times\cdots\times n_d}$ of all real-valued, $d$-mode tensors be endowed with the Frobenius inner product $\langle\cdot,\cdot\rangle$ and the corresponding norm $\|\cdot\|$, 
\begin{equation*}
\langle\mathcal S,\mathcal T\rangle
:=
\sum_{1\le i_j\le n_j}\mathcal S_{i_1,\ldots,i_d}\mathcal T_{i_1,\ldots,i_d},\quad
\|\mathcal T\|:=\sqrt{\langle\mathcal T,\mathcal T\rangle}.
\end{equation*}

\subsection{Symmetric tensor decomposition}
For a given vector $\mathbf v=(v_1,\ldots,v_n)^\top\in\mathds R^n$, let the $d$-fold outer power of $\mathbf v$ be defined as
\begin{equation*}
\mathbf v^{\otimes d}:=\mathbf v\otimes\cdots\otimes\mathbf v\in\mathds R^{n\times\cdots\times n},\quad
(\mathbf v\otimes\cdots\otimes\mathbf v)_{i_1,\ldots,i_d}:=v_{i_1}\cdots v_{i_d}.
\end{equation*}
Then $\mathbf v^{\otimes d}\in S^d(\mathds R^n)$ is called a symmetric rank-1 tensor. It is well-known that each symmetric tensor $\mathcal T\in S^d(\mathds R^n)$ can be decomposed into a finite sum
\begin{equation}\label{eq:symmdecomp}
\mathcal T=\sum_{k=1}^r \lambda_k\mathbf v_k^{\otimes d}
\end{equation}
where $\mathbf v_1,\ldots,\mathbf v_k\in\mathds R^n$ with $\|\mathbf v_k\|=1$ and $\lambda_k\in\mathds R$, $1\le k\le r$. The smallest number $r\in\mathds N$ for which such a decomposition exists is called the symmetric rank of $\mathcal T$. If $d$ is odd, we may assume that all weights $\lambda_k$ in \eqref{eq:symmdecomp} are positive.

If $\mathcal T\in S^d(\mathds R^n)$ has a decomposition \eqref{eq:symmdecomp} such that $\{\mathbf v_1,\ldots,\mathbf v_r\}\subset\mathds R^n$ is an orthonormal set, then $\mathcal T$ is called \emph{orthogonally decomposable} or, in short, an \emph{odeco tensor}. However, the set of all odeco tensors is relatively small since the symmetric rank of an odeco tensor cannot exceed the local dimension $n$. 

\subsection{Frames and simplex frames}
A family of $r\ge n$ vectors $\{\mathbf v_1,\ldots,\mathbf v_r\}\subset\mathds R^n$ is called a \emph{frame} for $\mathds R^n$ if there exist constants $B\ge A>0$ such that
\begin{equation*}
A\|\mathbf v\|^2\le\sum_{k=1}^r\big|\langle\mathbf v,\mathbf v_k\rangle\big|^2\le B\|\mathbf v\|^2,\quad\text{for all }\mathbf v\in\mathds R^n.
\end{equation*}
A frame $\{\mathbf v_1,\ldots,\mathbf v_r\}$ with equal frame constants $A=B$ is called \emph{tight frame}, and a tight frame $\{\mathbf v_1,\ldots,\mathbf v_r\}$ is called a \emph{unit-norm tight frame} if additionally $\|\mathbf v_k\|=1$ for all $1\le k\le r$. It is well-known that a set $\{\mathbf v_1,\ldots,\mathbf v_r\}$ is a unit-norm tight frame for $\mathds R^n$ if and only if
\begin{equation*}
\mathbf V\mathbf V^\top=A\mathbf I,\quad
\mathbf V:=(\mathbf v_1\cdots\mathbf v_r)\in\mathds R^{n\times r}.
\end{equation*}
Typical examples of unit-norm tight frames in each $\mathds R^n$ are given by orthonormal bases $\{\mathbf v_1,\ldots,\mathbf v_n\}\subset\mathds R^n$, with $r=n$ and $A=1$, and by so-called \emph{simplex frames} $\{\mathbf v_1,\ldots,\mathbf v_{n+1}\}\subset\mathds R^n$, where $r=n+1$, $A=\frac{n+1}{n}$, and $\mathbf v_k\in\mathds R^n$ is given by the orthogonal projection of the $k$-th unit vector $\mathbf e_k\in\mathds R^{n+1}$ onto the orthogonal complement of $\mathds 1_{n+1}=(1,\ldots,1)^\top\in\mathds R^{n+1}$, and subsequent renormalization,
\begin{equation}\label{eq:simplexframeconstruction}
\mathbf v_k
=
\begin{cases}
\sqrt{1+\frac{1}{n}}\mathbf e_k-\frac{1}{n^{3/2}}\big(\sqrt{n+1}-1\big)\mathds 1_n,&1\le k\le n,\\
-\frac{1}{\sqrt{n}}\mathds 1_n,&k=n+1.
\end{cases}
\end{equation}
We have that
\begin{equation}\label{eq:simplexframeoperator}
\mathbf V\mathbf V^\top=\frac{n+1}{n}\mathbf I,
\end{equation}
and the Gramian matrix of all simplex frame elements is given by
\begin{equation}\label{eq:simplexframegramian}
\mathbf V^\top\mathbf V=\big(\langle\mathbf v_k,\mathbf v_j\rangle\big)_{1\le j,k\le n+1}
=
\begin{pmatrix}
1&-\frac{1}{n}&\cdots&-\frac{1}{n}\\
-\frac{1}{n}&1&\ddots&\vdots\\
\vdots&\ddots&\ddots&-\frac{1}{n}\\
-\frac{1}{n}&\cdots&-\frac{1}{n}&1
\end{pmatrix},
\end{equation}
and the nullspace of $\mathbf V^\top\mathbf V$ and $\mathbf V$ is spanned by $\mathds 1_{n+1}$.

\subsection{Tensor eigenvalues}
For a given symmetric tensor $\mathcal T\in S^d(\mathds R^n)$, a vector $\mathbf v\in\mathds R^n\setminus\{\mathbf 0\}$ is called a (real) \emph{eigenvector} of $\mathcal T$ with \emph{eigenvalue} $\mu\in\mathds R$ if
\begin{equation}\label{eq:eigenvector}
\mathcal T\cdot \mathbf v^{d-1}=\mu\mathbf v.
\end{equation}
Here $\mathcal T\cdot\mathbf v^{d-1}$ denotes the partial contraction of $\mathcal T$ by $\mathbf v$ along all but one of the $d$ modes,
\begin{equation*}
(\mathcal T\cdot\mathbf v^{d-1})_j
:=
\sum_{i_1,\ldots,i_{d-1}=1}^n \mathcal T_{i_1,\ldots,i_{d-1},j}v_{i_1}\cdots v_{i_{d-1}}.
\end{equation*}
Due to the symmetry of $\mathcal T$, it is irrelevant which particular modes are used in the $(d-1)$-fold sum. A tuple $(\mathbf v,\mu)$ consisting of an eigenvector $\mathbf v\in\mathds R^n\setminus\{0\}$ and an associated eigenvalue $\mu\in\mathds R$ is called an \emph{eigenpair} of $\mathcal T$. If $\|\mathbf v\|=1$, an eigenpair $(\mathbf v,\mu)$ is called a \emph{normalized eigenpair}. Normalized eigenpairs can be understood as critical points of the tensor energy functional
\begin{equation*}
J(\mathbf v):=\langle\mathcal T,\mathbf v^{\otimes d}\rangle,\quad\mathbf v\in\mathds R^n
\end{equation*}
under the unit norm constraint $\|\mathbf v\|=1$.

The left-hand side of \eqref{eq:eigenvector} being $(d-1)$-homogeneous, we can use that each eigenpair $(\mathbf v,\mu)$ of $\mathcal T$ induces the eigenpairs $(t\mathbf v,t^{d-2}\mu)$, $t\ne 0$. In particular, if $(\mathbf v,\mu)$ is an eigenpair of $\mathcal T$, then
\begin{equation}\label{eq:normalizedeigenpair}
\Big(\frac{\mathbf v}{\|\mathbf v\|},\frac{\mu}{\|\mathbf v\|^{d-2}}\Big)
\end{equation}
is a normalized eigenpair of $\mathcal T$.

\subsection{Tensor power iteration}
By normalizing both sides of \eqref{eq:eigenvector}, we observe that a necessary condition for a normalized vector $\mathbf v\in\mathds R^n$ to be an eigenvector of $\mathcal T\in S^d(\mathds R^n)$ with eigenvalue $\mu>0$ is that $\mathbf v$ is a fixed point of the mapping
\begin{equation}\label{eq:phi0}
\varphi:\mathds R^n\setminus\{\mathbf 0\}\to\mathds R^n\setminus\{\mathbf 0\},\quad
\varphi(\mathbf v):=\frac{\mathcal T\cdot\mathbf v^{d-1}}{\|\mathcal T\cdot\mathbf v^{d-1}\|}.
\end{equation}
More generally, if $(\mathbf v,\mu)$ is a normalized eigenpair of $\mathcal T$ with eigenvalue $\mu\ne 0$, we have
\begin{equation*}
s\varphi(\mathbf v)=\mathbf v,\quad
s=\sign(\mu)=\frac{\mu}{|\mu|}.
\end{equation*}
The associated canonical fixed point iteration
\begin{equation}\label{eq:tpm}
\mathbf v^{(j+1)}:=\varphi(\mathbf v^{(j)}),\quad j=0,1,\ldots
\end{equation}
is called \emph{tensor power iteration} (TPI) \cite{AN14}. In the matrix case $\mathcal T \in S^{2}(\mathbb{R}^{n})$, it is well known that the iteration (\ref{eq:tpm}) converges for any starting point $\mathbf 0 \neq \mathbf{v}^{(0)} \in \mathbb{R}^{n} $  to an eigenvector corresponding to the largest eigenvalue of $\mathcal T$. For general $d \geq 2$, there exists a distinguished class of eigenpairs with respect to their behaviour under this iteration. Let $ \mathcal T \in S^{d}(\mathbb{R}^{n})$ and $\mathbf{v} \in \mathbb{R}^{n}$ a unit vector. We call $\mathbf{v}$ a robust eigenvector of $\mathcal T$ if there exists an open neighbourhood of $\mathbf{v}$ such that the iterates of (\ref{eq:tpm}) starting with any $\mathbf{w}$ from this neighbourhood converge to $\mathbf{v}$. Clearly, non-robust eigenvectors $\mathbf{v}$ cannot be computed by using (TPI) unless the starting point equals $\mathbf{v}$. The robustness of a fixed point of \eqref{eq:phi0} can be quantified by the spectral radius of the Jacobian of $\varphi$ evaluated in that particular fixed point. For a detailed analysis of the robustness of the map \eqref{eq:phi0}, see Section \ref{robustanalysis} for details.

\newpage

\section{Eigenpairs of regular simplex tensors}\label{eigenpa}
We will now perform an exhaustive search for eigenpairs of the simplex tensor
\begin{equation}\label{eq:simplextensor}
\mathcal T:=\sum_{k=1}^{n+1}\mathbf v_k^{\otimes d},
\end{equation}
where $n,d\ge 2$ and $\{\mathbf v_1,\ldots,\mathbf v_{n+1}\}\subset\mathds R^n$ is given by \eqref{eq:simplexframeconstruction}. Our principal idea will be to develop necessary and sufficient algebraic conditions on the barycentric coordinates of a given normalized vector $\mathbf v$ with respect to the generators $\mathbf v_k$ for $\mathbf v$ to be an eigenvector of $\mathcal T$.

\subsection{Generic theory}
We will start our analysis by looking at the generic case $d,n\ge 2$. First of all, by using the linear independence of each $n$-element subset of the simplex frame $\{\mathbf v_1,\ldots,\mathbf v_{n+1}\}$, we can easily deduce the following system of equations for the coordinates of an eigenvector $\mathbf v\in\mathds R^n\setminus\{0\}$ of $\mathcal T$ with respect to the basis $\{\mathbf v_1,\ldots,\mathbf v_n\}$ of $\mathds R^n$.
\begin{lemma}\label{lemma:eigenvectorcoordinates}
$(\mathbf v,\mu)$ is an eigenpair of $\mathcal T$ if and only if $\mathbf v=\sum_{k=1}^n\alpha_k\mathbf v_k$ for some $\alpha_1,\ldots,\alpha_n\in\mathds R$ with $\sum_{k=1}^n|\alpha_k|>0$ and
\begin{equation}\label{eq:eigenvectorcoord1}
\mu\alpha_k
=
\Big(\alpha_k-\frac{1}{n}\sum_{\substack{j=1\\ j\ne k}}^n\alpha_j\Big)^{d-1}-\Big({-}\frac{1}{n}\sum_{j=1}^n\alpha_j\Big)^{d-1},\quad 1\le k\le n.
\end{equation}
\end{lemma}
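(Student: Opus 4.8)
The plan is to expand the contraction $\mathcal T\cdot\mathbf v^{d-1}$ using the symmetric decomposition \eqref{eq:simplextensor}, rewrite everything in terms of the coordinates $\alpha_k$ of $\mathbf v$ with respect to the basis $\{\mathbf v_1,\dots,\mathbf v_n\}$, and then use the linear independence of an $n$-element subset of the frame to read off the coordinate equations. First I would note that since $\{\mathbf v_1,\dots,\mathbf v_n\}$ is a basis of $\mathds R^n$ (any $n$-element subset of the simplex frame is linearly independent, as the Gramian \eqref{eq:simplexframegramian} has full rank on such subsets), every nonzero $\mathbf v$ can uniquely be written as $\mathbf v=\sum_{k=1}^n\alpha_k\mathbf v_k$ with $\sum_k|\alpha_k|>0$, and it remains to translate \eqref{eq:eigenvector} into conditions on the $\alpha_k$.

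Next I would compute, for each frame element $\mathbf v_j$, the scalar $\langle\mathbf v,\mathbf v_j\rangle=\sum_{k=1}^n\alpha_k\langle\mathbf v_k,\mathbf v_j\rangle$ using the Gramian entries from \eqref{eq:simplexframegramian}: for $1\le j\le n$ this gives $\langle\mathbf v,\mathbf v_j\rangle=\alpha_j-\frac1n\sum_{k\ne j}\alpha_k$, and for $j=n+1$ it gives $\langle\mathbf v,\mathbf v_{n+1}\rangle=-\frac1n\sum_{k=1}^n\alpha_k$, which are precisely the two types of expressions appearing in \eqref{eq:eigenvectorcoord1}. Since $\mathbf v_k^{\otimes d}\cdot\mathbf v^{d-1}=\langle\mathbf v_k,\mathbf v\rangle^{d-1}\mathbf v_k$, linearity of the contraction yields
\begin{equation*}
\mathcal T\cdot\mathbf v^{d-1}=\sum_{j=1}^{n+1}\langle\mathbf v_j,\mathbf v\rangle^{d-1}\mathbf v_j.
\end{equation*}
Then $(\mathbf v,\mu)$ is an eigenpair iff $\sum_{j=1}^{n+1}\langle\mathbf v_j,\mathbf v\rangle^{d-1}\mathbf v_j=\mu\sum_{k=1}^n\alpha_k\mathbf v_k$, i.e.
\begin{equation*}
\sum_{j=1}^{n}\Big(\langle\mathbf v_j,\mathbf v\rangle^{d-1}-\mu\alpha_j\Big)\mathbf v_j
+\langle\mathbf v_{n+1},\mathbf v\rangle^{d-1}\mathbf v_{n+1}=\mathbf 0.
\end{equation*}

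To finish, I would eliminate $\mathbf v_{n+1}$ using its representation in the basis $\{\mathbf v_1,\dots,\mathbf v_n\}$. Because the nullspace of $\mathbf V$ is spanned by $\mathds 1_{n+1}$, we have $\sum_{k=1}^{n+1}\mathbf v_k=\mathbf 0$, hence $\mathbf v_{n+1}=-\sum_{k=1}^{n}\mathbf v_k$. Substituting this and invoking the linear independence of $\{\mathbf v_1,\dots,\mathbf v_n\}$, the vanishing of the coefficient of each $\mathbf v_k$ gives
\begin{equation*}
\langle\mathbf v_k,\mathbf v\rangle^{d-1}-\mu\alpha_k-\langle\mathbf v_{n+1},\mathbf v\rangle^{d-1}=0,\quad 1\le k\le n,
\end{equation*}
which upon inserting the two scalar products computed above is exactly \eqref{eq:eigenvectorcoord1}. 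The converse direction is immediate by reversing these steps. The only mildly delicate point is the correct bookkeeping of the $-\frac1n$ off-diagonal entries in the Gramian, together with the clean use of the relation $\sum_k\mathbf v_k=\mathbf 0$ to absorb the extra $(n+1)$-st frame element; once those are in place the equivalence is a direct computation.
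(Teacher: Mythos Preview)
Your proposal is correct and follows essentially the same approach as the paper's own proof: expand $\mathcal T\cdot\mathbf v^{d-1}=\sum_{j=1}^{n+1}\langle\mathbf v_j,\mathbf v\rangle^{d-1}\mathbf v_j$ via the symmetric decomposition, compute the inner products from the Gramian \eqref{eq:simplexframegramian}, eliminate $\mathbf v_{n+1}$ using $\sum_{k=1}^{n+1}\mathbf v_k=\mathbf 0$, and read off the coordinate equations by linear independence of $\{\mathbf v_1,\dots,\mathbf v_n\}$. The only difference is that you spell out the inner products $\langle\mathbf v,\mathbf v_j\rangle$ separately before assembling the contraction, whereas the paper does this in a single display; the logical content is identical.
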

\begin{proof}
$\{\mathbf v_1,\ldots,\mathbf v_n\}$ is a basis for $\mathds R^n$, so that each eigenvector $\mathbf v\in\mathds R^n\setminus\{\mathbf 0\}$ has a unique representation $\mathbf v=\sum_{k=1}^n\alpha_k\mathbf v_k$ with $\sum_{k=1}^n|\alpha_k|>0$. By inserting this representation into the eigenvector equation \eqref{eq:eigenvector}, and by using the normalization $\|\mathbf v_k\|=1$, \eqref{eq:simplexframegramian} and $\sum_{k=1}^{n+1}\mathbf v_k=\mathbf 0$, we obtain that
\begin{align*}
\mu\sum_{k=1}^n\alpha_k\mathbf v_k
&=
\sum_{k=1}^{n+1}\Big\langle\mathbf v_k,\sum_{j=1}^n\alpha_j\mathbf v_j\Big\rangle^{d-1}\mathbf v_k\\
&=
\sum_{k=1}^n\Big(\alpha_k-\frac{1}{n}\sum_{1\le j\ne k\le n}\alpha_j\Big)^{d-1}\mathbf v_k
+\Big({-}\frac{1}{n}\sum_{j=1}^n\alpha_j\Big)^{d-1}\mathbf v_{n+1}\\
&=
\sum_{k=1}^n\left(\Big(\alpha_k-\frac{1}{n}\sum_{\substack{j=1\\ j\ne k}}^n\alpha_j\Big)^{d-1}-\Big({-}\frac{1}{n}\sum_{j=1}^n\alpha_j\Big)^{d-1}\right)\mathbf v_k,
\end{align*}
which yields \eqref{eq:eigenvectorcoord1} after using the linear independence of $\{\mathbf v_1,\ldots,\mathbf v_n\}$.
\end{proof}

By using that each elementary tensor $\mathbf v_k^{\otimes d}$ is weighted equally within $\mathcal T$, we can deduce the following cyclic symmetry between all eigenpairs.
\begin{lemma}\label{lemma:cyclic}
Let $(\mathbf v,\mu)$ be an eigenpair of $\mathcal T$ with $\mathbf v=\sum_{k=1}^n\alpha_k\mathbf v_k$ for certain $\alpha_k\in\mathds R$. Then for each permutation $\sigma$ of $\{1,\ldots,n+1\}$, also the vectors
\begin{equation*}
\sum_{k=1}^n\alpha_k\mathbf v_{\sigma(k)}
\end{equation*}
are eigenvectors of $\mathcal T$ with the same eigenvalue $\mu$.
\end{lemma}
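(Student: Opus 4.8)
The plan is to realize each index permutation $\sigma$ of $\{1,\dots,n+1\}$ as an \emph{orthogonal} symmetry of the tensor $\mathcal T$, and then to use that orthogonal changes of variables send eigenpairs to eigenpairs. The starting observation is that the Gramian \eqref{eq:simplexframegramian} is invariant under simultaneous permutation of its rows and columns: since it equals $\frac{n+1}{n}\mathbf I-\frac1n\mathds 1_{n+1}\mathds 1_{n+1}^\top$ and both summands are permutation-invariant, we get $\langle\mathbf v_{\sigma(j)},\mathbf v_{\sigma(k)}\rangle=\langle\mathbf v_j,\mathbf v_k\rangle$ for all $j,k$ and every $\sigma$.

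From this coincidence of Gramians I would produce an orthogonal matrix $\mathbf Q_\sigma\in\mathds R^{n\times n}$ with $\mathbf Q_\sigma\mathbf v_k=\mathbf v_{\sigma(k)}$ for all $1\le k\le n+1$. Since $\{\mathbf v_1,\dots,\mathbf v_n\}$ is a basis of $\mathds R^n$, there is a unique linear map $\mathbf Q_\sigma$ with $\mathbf Q_\sigma\mathbf v_k=\mathbf v_{\sigma(k)}$ for $1\le k\le n$, and by the previous step it preserves inner products on this basis, hence is orthogonal. To check that it also sends $\mathbf v_{n+1}$ to $\mathbf v_{\sigma(n+1)}$, write $\mathbf v_{n+1}=\sum_{j=1}^n c_j\mathbf v_j$; the coefficient vector $\mathbf c=(c_j)$ is the unique solution of the linear system with coefficient matrix $\big(\langle\mathbf v_i,\mathbf v_j\rangle\big)_{i,j\le n}$ and right-hand side $\big(\langle\mathbf v_i,\mathbf v_{n+1}\rangle\big)_{i\le n}$. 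Replacing every index $i$ by $\sigma(i)$ and every index $j$ by $\sigma(j)$ leaves this system unchanged by the permutation-invariance of the Gramian, so $\mathbf v_{\sigma(n+1)}=\sum_{j=1}^n c_j\mathbf v_{\sigma(j)}$ and therefore $\mathbf Q_\sigma\mathbf v_{n+1}=\sum_j c_j\mathbf Q_\sigma\mathbf v_j=\sum_j c_j\mathbf v_{\sigma(j)}=\mathbf v_{\sigma(n+1)}$. (Alternatively, $\mathbf Q_\sigma$ is simply the restriction to $\mathds 1_{n+1}^\perp$ of the coordinate-permutation matrix on $\mathds R^{n+1}$, through the explicit construction \eqref{eq:simplexframeconstruction}.)

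Now I would exploit the equal weights. Because $\mathbf Q_\sigma$ merely permutes the generators, $\sum_{k=1}^{n+1}(\mathbf Q_\sigma\mathbf v_k)^{\otimes d}=\sum_{k=1}^{n+1}\mathbf v_{\sigma(k)}^{\otimes d}=\sum_{k=1}^{n+1}\mathbf v_k^{\otimes d}=\mathcal T$. On the other hand, for any orthogonal $\mathbf Q$ and any $\mathcal S=\sum_i\mathbf u_i^{\otimes d}$ one has $\mathcal S\cdot\mathbf w^{d-1}=\sum_i\langle\mathbf u_i,\mathbf w\rangle^{d-1}\mathbf u_i$, and hence $\big(\sum_i(\mathbf Q\mathbf u_i)^{\otimes d}\big)\cdot(\mathbf Q\mathbf v)^{d-1}=\sum_i\langle\mathbf Q\mathbf u_i,\mathbf Q\mathbf v\rangle^{d-1}\mathbf Q\mathbf u_i=\mathbf Q\big(\mathcal S\cdot\mathbf v^{d-1}\big)$; so if $\mathcal S\cdot\mathbf v^{d-1}=\mu\mathbf v$ then $\big(\sum_i(\mathbf Q\mathbf u_i)^{\otimes d}\big)\cdot(\mathbf Q\mathbf v)^{d-1}=\mu\mathbf Q\mathbf v$. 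Applying this with $\mathcal S=\mathcal T$, $\mathbf u_k=\mathbf v_k$ and $\mathbf Q=\mathbf Q_\sigma$, and using $\sum_k(\mathbf Q_\sigma\mathbf v_k)^{\otimes d}=\mathcal T$, shows that $(\mathbf Q_\sigma\mathbf v,\mu)$ is again an eigenpair of $\mathcal T$. Finally $\mathbf Q_\sigma\mathbf v=\mathbf Q_\sigma\sum_{k=1}^n\alpha_k\mathbf v_k=\sum_{k=1}^n\alpha_k\mathbf v_{\sigma(k)}$, which is the asserted eigenvector.

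The only genuinely delicate point is the second step: manufacturing the orthogonal map $\mathbf Q_\sigma$ from the mere coincidence of the Gramians. This hinges on the invertibility of the $n\times n$ sub-Gramian of any $n$ of the $n+1$ frame vectors, which holds because each such $n$-element subset of the simplex frame is linearly independent; everything else is routine. (One could instead argue purely combinatorially from the algebraic system \eqref{eq:eigenvectorcoord1}, which is symmetric under permutations of $\alpha_1,\dots,\alpha_n$ fixing the auxiliary index $n+1$, and then treat the transposition $(n,n+1)$ separately using $\mathbf v_{n+1}=-\sum_{k=1}^n\mathbf v_k$; but the orthogonal-symmetry argument is cleaner and makes the geometric content transparent.)
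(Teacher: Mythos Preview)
Your proof is correct and takes a genuinely different route from the paper's. The paper argues purely combinatorially from the coordinate characterization of Lemma~\ref{lemma:eigenvectorcoordinates}: it splits into the cases $\sigma(n+1)=n+1$ and $\sigma(n+1)\ne n+1$, and in the latter case explicitly computes the new coordinates $\beta_k$ of $\sum_k\alpha_k\mathbf v_{\sigma(k)}$ with respect to the basis $\{\mathbf v_1,\dots,\mathbf v_n\}$ and verifies \eqref{eq:eigenvectorcoord1} for the $\beta_k$ by direct substitution. This is exactly the alternative you sketch in your final parenthetical remark. Your argument instead realizes each permutation as an orthogonal symmetry $\mathbf Q_\sigma$ of $\mathcal T$ and then invokes the general covariance $\mathcal T\cdot(\mathbf Q\mathbf v)^{d-1}=\mathbf Q(\mathcal T\cdot\mathbf v^{d-1})$ for orthogonal symmetries. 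The paper's approach stays entirely within the algebraic machinery already set up and needs no auxiliary construction, at the cost of a somewhat lengthy case-by-case verification. Your approach is shorter and makes the geometric content transparent (the symmetric group $S_{n+1}$ acts orthogonally on $\mathds R^n$ as the isometry group of the regular simplex, and $\mathcal T$ is invariant under this action), and it generalizes immediately to any symmetric tensor whose decomposition is invariant under an orthogonal group action. One small simplification: since $\sum_{k=1}^{n+1}\mathbf v_k=\mathbf 0$, the verification that $\mathbf Q_\sigma\mathbf v_{n+1}=\mathbf v_{\sigma(n+1)}$ is immediate without discussing the sub-Gramian system, as $\mathbf Q_\sigma\mathbf v_{n+1}=-\sum_{k=1}^n\mathbf v_{\sigma(k)}=\mathbf v_{\sigma(n+1)}$.
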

\begin{proof}
$(\mathbf v,\mu)$ is an eigenpair with $\mathbf v=\sum_{k=1}^n\alpha_k\mathbf v_k$, so that \eqref{eq:eigenvectorcoord1} holds true by Lemma \ref{lemma:eigenvectorcoordinates}. Let $\sigma$ be a permutation of $\{1,\ldots,n+1\}$. If $p:=\sigma(n+1)=n+1$, we have $\{1,\ldots,n\}=\{\sigma(1),\ldots,\sigma(n)\}$, so that \eqref{eq:eigenvectorcoord1} holds true for all $\alpha_{\sigma^{-1}(k)}$, $1\le k\le n$, i.e, Lemma \ref{lemma:eigenvectorcoordinates} yields that
\begin{equation*}
\sum_{k=1}^n\alpha_k\mathbf v_{\sigma(k)}=\sum_{k=1}^n\alpha_{\sigma^{-1}(k)}\mathbf v_k
\end{equation*}
is an eigenvector of $\mathcal T$ with eigenvalue $\mu$. If $p=\sigma(n+1)\in\{1,\ldots n\}$, we have $q:=\sigma^{-1}(n+1)\in\{1,\ldots,n\}$. By using that $\sum_{k=1}^{n+1}\mathbf v_k=\mathbf 0$ and because of the equivalence
\begin{equation}\label{eq:cyclichelp}
1\le\sigma^{-1}(k)\le n\;\wedge\;\sigma^{-1}(k)\ne q\quad\Leftrightarrow\quad 1\le k\le n\;\wedge\; k\ne p,
\end{equation}
we can write
\begin{align*}
\sum_{k=1}^n\alpha_k\mathbf v_{\sigma(k)}
&=
\alpha_q\mathbf v_{n+1}
+
\sum_{\substack{k=1\\ k\ne q}}^n\alpha_k\mathbf v_{\sigma(k)}
=
{-}\alpha_q\mathbf v_p
+
\sum_{\substack{k=1\\ k\ne p}}^n(\alpha_{\sigma^{-1}(k)}-\alpha_q)\mathbf v_k.
\end{align*}
Therefore, it remains to prove that \eqref{eq:eigenvectorcoord1} holds true for
\begin{equation*}
\beta_k:=\begin{cases}
\alpha_{\sigma^{-1}(k)}-\alpha_q,&1\le k\ne p\le n,\\
-\alpha_q,&k=p,
\end{cases}
\end{equation*}
because then the claim follows by an application of Lemma \ref{lemma:eigenvectorcoordinates}. If $k=p$, we compute that by means of \eqref{eq:cyclichelp} and \eqref{eq:eigenvectorcoord1},
\begin{align*}
\lefteqn{\Big(\beta_p-\frac{1}{n}\sum_{\substack{j=1\\ j\ne p}}^n\beta_j\Big)^{d-1}
-\Big({-}\frac{1}{n}\sum_{j=1}^n\beta_j\Big)^{d-1}}\\
&=
\Big({-}\alpha_q-\frac{1}{n}\sum_{\substack{j=1\\ j\ne p}}^n(\alpha_{\sigma^{-1}(j)}-\alpha_q)\Big)^{d-1}
-\Big(\frac{1}{n}\alpha_q-\frac{1}{n}\sum_{\substack{j=1\\ j\ne p}}^n(\alpha_{\sigma^{-1}(j)}-\alpha_q)\Big)^{d-1}\\
&=
\Big({-}\frac{1}{n}\alpha_q-\frac{1}{n}\sum_{\substack{j=1\\ j\ne p}}^n\alpha_{\sigma^{-1}(j)}\Big)^{d-1}
-\Big(\alpha_q-\frac{1}{n}\sum_{\substack{j=1\\ j\ne p}}\alpha_{\sigma^{-1}(j)}\Big)^{d-1}\\
&=
\Big({-}\frac{1}{n}\sum_{j=1}^n\alpha_j\Big)^{d-1}
-
\Big(\alpha_q-\frac{1}{n}\sum_{\substack{j=1\\ j\ne q}}^n\alpha_j\Big)^{d-1}\\
&=
-\alpha_q\\
&=\beta_p.
\end{align*}
If $1\le k\le n$ and $k\ne p$, we can argue in a similar way, again using \eqref{eq:cyclichelp}:
\begin{align*}
\lefteqn{\Big(\beta_k-\frac{1}{n}\sum_{\substack{j=1\\ j\ne k}}^n\beta_j\Big)^{d-1}-\Big({-}\frac{1}{n}\sum_{j=1}^n\beta_j\Big)^{d-1}}\\
&=
\Big(\alpha_{\sigma^{-1}(k)}-\alpha_q+\frac{1}{n}\alpha_q-\frac{1}{n}\sum_{\substack{j=1\\ j\notin\{k,p\}}}^n(\alpha_{\sigma^{-1}(j)}-\alpha_q)\Big)^{d-1}\\
&\phantom{=}
-\Big(\frac{1}{n}\alpha_q-\frac{1}{n}\sum_{\substack{j=1\\ j\ne p}}^n(\alpha_{\sigma^{-1}(j)}-\alpha_q)\Big)^{d-1}\\
&=
\Big(\alpha_{\sigma^{-1}(k)}-\frac{1}{n}\alpha_q-\frac{1}{n}\sum_{\substack{j=1\\ j\notin\{k,p\}}}^n\alpha_{\sigma^{-1}(j)}\Big)^{d-1}
-\Big(\alpha_q-\frac{1}{n}\sum_{\substack{j=1\\ j\ne p}}^n\alpha_{\sigma^{-1}(j)}\Big)^{d-1}\\
&=
\Big(\alpha_{\sigma^{-1}(k)}-\frac{1}{n}\sum_{\substack{j=1\\ j\ne \sigma^{-1}(k)}}^n\alpha_j\Big)^{d-1}
-
\Big(\alpha_q-\frac{1}{n}\sum_{\substack{j=1\\ j\ne q}}^n\alpha_j\Big)^{d-1}\\
&=
\alpha_{\sigma^{-1}(k)}-\alpha_q\\
&=\beta_k.
\end{align*}
\end{proof}

In view of the fact that $\mathds R^n$ can be decomposed into the conical hulls
\begin{equation*}
\Big\{\sum_{k=1}^n\alpha_k\mathbf v_{\sigma(k)}:\alpha_k\ge 0\Big\},\quad
\sigma:\{1,\ldots,n+1\}\to\{1,\ldots,n+1\}\text{ permutation},
\end{equation*}
Lemma \ref{lemma:cyclic} tells us that it is sufficient to compute all eigenpairs $(\mathbf v,\mu)$ with eigenvectors from the set
\begin{equation*}
\Big\{\sum_{k=1}^n\alpha_k\mathbf v_k:\alpha_k\ge 0,\sum_{j=1}^n|\alpha_j|>0\Big\}
\end{equation*}
of all nontrivial conical combinations from the linearly independent set $\{\mathbf v_1,\ldots,\mathbf v_n\}$.
What is more, by the $(d-1)$-homogeneity of the right-hand side of \eqref{eq:eigenvectorcoord1}, it is sufficient to consider all convex combinations
\begin{equation}\label{eq:candidates}
\mathbf v=\sum_{k=1}^n s_k\mathbf v_k,\quad 0\le s_k\le 1,\quad\sum_{k=1}^n s_k=1
\end{equation}
from $\{\mathbf v_1,\ldots,\mathbf v_n\}$ as eigenvector candidates. By inserting this very family of vectors into Lemma \ref{lemma:eigenvectorcoordinates}, we obtain the following eigenpair conditions.

\begin{lemma}\label{lemma:candidates}
Let $0\le s_k\le 1$ for $1\le k\le n-1$, and $s_n:=1-\sum_{k=1}^{n-1}s_k$. Then $(\sum_{k=1}^n s_k\mathbf v_k,\mu)$ is an eigenpair of $\mathcal T$ if and only if
\begin{equation}\label{eq:eigenvectorcoord2}
\mu s_k=\frac{1}{n^{d-1}}\Big(\big((n+1)s_k-1\big)^{d-1}-(-1)^{d-1}\Big),
\quad
1\le k\le n.
\end{equation}
\end{lemma}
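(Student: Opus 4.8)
The plan is to obtain Lemma~\ref{lemma:candidates} as a direct specialization of Lemma~\ref{lemma:eigenvectorcoordinates}, with no new ideas required beyond a careful substitution. I would set $\alpha_k := s_k$ for $1\le k\le n$, where $s_n = 1-\sum_{k=1}^{n-1}s_k$, so that by construction $\sum_{k=1}^n\alpha_k = 1$. First I would verify the nontriviality hypothesis $\sum_{k=1}^n|\alpha_k|>0$ of Lemma~\ref{lemma:eigenvectorcoordinates}: since $|s_k|\ge s_k$ for every $k$, we get $\sum_{k=1}^n|s_k|\ge\sum_{k=1}^n s_k = 1>0$, so the hypothesis holds automatically. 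Consequently Lemma~\ref{lemma:eigenvectorcoordinates} applies, and $\bigl(\sum_{k=1}^n s_k\mathbf v_k,\mu\bigr)$ is an eigenpair of $\mathcal T$ if and only if \eqref{eq:eigenvectorcoord1} holds with $\alpha_k$ replaced by $s_k$.

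Next I would simplify the right-hand side of \eqref{eq:eigenvectorcoord1} using the single relation $\sum_{j=1}^n s_j = 1$. The second summand becomes $k$-independent: $-\frac1n\sum_{j=1}^n s_j = -\frac1n$, so it equals $\bigl(-\frac1n\bigr)^{d-1} = \frac{(-1)^{d-1}}{n^{d-1}}$. For the first summand, $\sum_{j\ne k}s_j = 1-s_k$, hence
\begin{equation*}
s_k-\frac1n\sum_{\substack{j=1\\ j\ne k}}^n s_j
= s_k-\frac{1-s_k}{n}
= \frac{(n+1)s_k-1}{n},
\end{equation*}
so that the first summand equals $\frac{\bigl((n+1)s_k-1\bigr)^{d-1}}{n^{d-1}}$. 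Substituting both back into \eqref{eq:eigenvectorcoord1} yields exactly \eqref{eq:eigenvectorcoord2} for each $1\le k\le n$, and since every step is an equivalence, \eqref{eq:eigenvectorcoord2} also rearranges back to \eqref{eq:eigenvectorcoord1}; this proves the claimed equivalence.

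I do not expect any real obstacle: the lemma is a bookkeeping reduction, and the only conceptually relevant point is that imposing $\sum_{k=1}^n s_k=1$ is precisely what collapses the coupled nonlinear system \eqref{eq:eigenvectorcoord1} into the decoupled scalar equations \eqref{eq:eigenvectorcoord2}, in which each equation involves only $s_k$ and $\mu$. I would merely take care to track the sign $(-1)^{d-1}$ and the factor $n^{-(d-1)}$, and to note explicitly that although the statement only requires $0\le s_k\le 1$ for $k\le n-1$ (leaving $s_n$ possibly negative), no positivity of the $s_k$ is used in the computation — the equality $\sum_{k=1}^n s_k = 1$ alone suffices.
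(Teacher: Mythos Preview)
Your proposal is correct and follows exactly the paper's approach: set $\alpha_k=s_k$ in Lemma~\ref{lemma:eigenvectorcoordinates} and simplify both summands of \eqref{eq:eigenvectorcoord1} using $\sum_{k=1}^n s_k=1$. The paper's proof is just a one-line version of what you wrote; your extra care in checking the nontriviality hypothesis and tracking the powers of $n$ and the sign $(-1)^{d-1}$ is fine but not strictly necessary.
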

\begin{proof}
The claim follows by inserting the coordinates $\alpha_k=s_k$ into \eqref{eq:eigenvectorcoord1}, and by exploiting that $\sum_{k=1}^n s_k=1$. 
\end{proof}

By means of the function
\begin{equation}\label{eq:g}
g(s):=\big((n+1)s-1\big)^{d-1}-(-1)^{d-1},\quad s\in\mathds R,\quad d\ge 2,
\end{equation}
we can rewrite the eigencondition \eqref{eq:eigenvectorcoord2} in a compact way as
\begin{equation}\label{eq:eigenvectorcoord3}
\mu n^{d-1} s_k=g(s_k),\quad 1\le k\le n.
\end{equation}
The eigenvalue $\mu$ of an eigenpair $(\sum_{k=1}^n s_k\mathbf v_k,\mu)$, $s_k\ge 0$, $\sum_{k=1}^n s_k=1$, can hence be computed by
\begin{equation}\label{eq:mu3}
\mu=
\frac{1}{n^{d-1}}\sum_{k=1}^n g(s_k).
\end{equation}
However, we still have to check which solutions $s_1,\ldots,s_{n-1}$ the remaining system of eigenvalue conditions
\begin{equation}\label{eq:critical}
\left\{
\begin{aligned}
s_k\sum_{j=1}^n g(s_j)&=g(s_k),\quad 1\le k\le n-1,\\
s_n&=1-\sum_{j=1}^{n-1}s_j,\\
s_k&\ge 0,\quad 1\le k\le n,\\
\end{aligned}
\right.
\end{equation}
does actually have.

\begin{ex}
If $n=2$, \eqref{eq:critical} reads as the single equation in $s=s_1$
\begin{equation}\label{eq:criticaln2}
s\big(g(s)+g(1-s)\big)=g(s),\quad 0\le s\le 1.
\end{equation}
If $n=3$, \eqref{eq:critical} reads as the coupled system of equations in $s=s_1$ and $t=s_2$
\begin{equation}\label{eq:criticaln3}
\left\{
\begin{aligned}
s\big(g(s)+g(t)+g(1-s-t)\big)&=g(s),\\
t\big(g(s)+g(t)+g(1-s-t)\big)&=g(t),
\end{aligned}
\quad
s,t\ge 0,\quad s+t\le 1.
\right.
\end{equation}
\end{ex}

In order to deduce the solution set of \eqref{eq:critical}, let us first prove some fundamental properties of the auxiliary function $g$.
\begin{lemma}\label{lemma:g}
Let $n,d\ge 2$, and let $g$ be defined as in \eqref{eq:g}.
\begin{enumerate}[(i)]
\item We have $g(0)=0$, $g(\frac{1}{n+1})=(-1)^d$, $g(\frac{2}{n+1})=1+(-1)^d$, and $g(1)=n^{d-1}+(-1)^d>0$.
\item If $d\ge 3$, we have $g'(\frac{1}{n+1})=0$.
\item If $d$ is even, $g$ is strictly increasing and we have $g(s)>0$ for all $s>0$.
\item If $d$ is odd, $g$ is strictly convex with a unique local minimum at $s=\frac{1}{n+1}$, and we have $g(\frac{2}{n+1})=0$, so that $g(s)<0$ for $0<s<\frac{2}{n+1}$ and $g(s)>0$ for $\frac{2}{n+1}<s\le 1$. 
\item The polynomial $p:s\mapsto\frac{g(s)}{s}$ is well-defined. If $d=2$, $p(s)=n+1$ is constant. If $d$ is odd, $p$ is strictly increasing on $[0,\infty)$. If $d\ge 4$ is even, there exists a point $s^*\in [\frac{1}{n},\frac{2}{n+1})$ such that $p$ is strictly decreasing on $[0,s^*]$ and strictly increasing on $[s^*,\infty)$. We have $s^*=\frac{1}{n}$ if and only if $(n,d)=(2,4)$.
\item If $d$ is odd, the $m$-variate polynomial
\begin{equation*}
\mathds R^m\owns (s_1,\ldots,s_m)\mapsto g\Big(1-\sum_{k=1}^m s_k\Big)+\sum_{k=1}^m g(s_k)
\end{equation*}
is strictly convex, with unique minimum at $\mathbf s^*:=(\frac{1}{m+1},\ldots,\frac{1}{m+1})$ and value $(m+1)g(\frac{1}{m+1})$, $\mathbf s^*$ lying in the interior of the $m$-dimensional unit simplex
\begin{equation}\label{eq:Deltam}
\Delta_m:=\big\{(s_1,\ldots,s_m):s_k\ge 0,\sum_{j=1}^m s_j\le 1\big\}.
\end{equation}
\end{enumerate}
\end{lemma}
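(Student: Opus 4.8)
The plan is to prove items (i)--(vi) by elementary calculus, one variable at a time (and, for (vi), in $m$ separated variables), organising everything by the parity of $d$, since the sign behaviour of $g$, $g'$ and $g''$ is governed by the parities of the exponents $d-1$, $d-2$ and $d-3$. Throughout I would work with the closed forms $g(s)=((n+1)s-1)^{d-1}+(-1)^{d}$, $g'(s)=(d-1)(n+1)((n+1)s-1)^{d-2}$ and, for $d\ge 3$, $g''(s)=(d-1)(d-2)(n+1)^{2}((n+1)s-1)^{d-3}$. Item (i) is then pure substitution of $s\in\{0,\tfrac{1}{n+1},\tfrac{2}{n+1},1\}$, and (ii) holds because for $d\ge 3$ the exponent $d-2$ is positive, so $((n+1)s-1)^{d-2}$ vanishes at $s=\tfrac{1}{n+1}$.

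For (iii) and (iv), I would use that $t\mapsto t^{d-1}$ is strictly increasing on $\mathds R$ when $d$ is even ($d-1$ odd) and strictly convex on $\mathds R$ when $d$ is odd ($d-1$ even and $\ge 2$). Since $s\mapsto (n+1)s-1$ is a non-constant affine map and adding a constant preserves both properties, $g$ is strictly increasing when $d$ is even, which together with $g(0)=0$ yields (iii). When $d$ is odd, $g$ is strictly convex; moreover $d-2$ is odd, so $g'$ vanishes only at $s=\tfrac{1}{n+1}$ and changes sign there from negative to positive, making $\tfrac{1}{n+1}$ the unique minimiser; since here $g(0)=g(\tfrac{2}{n+1})=0$, strict convexity forces $g<0$ on $(0,\tfrac{2}{n+1})$ and $g>0$ on $(\tfrac{2}{n+1},\infty)$, which completes (iv).

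Item (v) is the core. Since $g$ is a polynomial with $g(0)=0$, it is divisible by $s$, so $p=g/s$ is a polynomial; for $d=2$ one has $g(s)=(n+1)s$, hence $p\equiv n+1$. For $d\ge 3$ I would introduce $h(s):=sg'(s)-g(s)$, so that $p'(s)=h(s)/s^{2}$ for $s\ne 0$ and hence $\sign p'(s)=\sign h(s)$ on $(0,\infty)$; note $h(0)=0$ and $h'(s)=sg''(s)$. If $d$ is odd, then $d-3$ is even, so $g''\ge 0$ on $\mathds R$; thus $h'\ge 0$ on $[0,\infty)$ with only isolated zeros, so $h$ is strictly increasing there and $h(0)=0$ forces $h>0$ on $(0,\infty)$, i.e. $p$ is strictly increasing on $[0,\infty)$. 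If $d\ge 4$ is even, then $d-3$ is odd, so $g''<0$ on $(-\infty,\tfrac{1}{n+1})$ and $g''>0$ on $(\tfrac{1}{n+1},\infty)$; therefore $h$ is strictly decreasing on $[0,\tfrac{1}{n+1}]$ and strictly increasing on $[\tfrac{1}{n+1},\infty)$, so $h<0$ on $(0,\tfrac{1}{n+1}]$ and $h$ has a single zero $s^{*}\in(\tfrac{1}{n+1},\infty)$ with $h<0$ on $(0,s^{*})$ and $h>0$ on $(s^{*},\infty)$; this produces precisely the decreasing/increasing split of $p$ at $s^{*}$. To locate $s^{*}$ I would evaluate $h$ at the two proposed endpoints: $h(\tfrac{2}{n+1})=2(d-2)>0$ gives $s^{*}<\tfrac{2}{n+1}$, whereas $h(\tfrac{1}{n})=n^{-(d-1)}((d-1)(n+1)-1-n^{d-1})$ gives $s^{*}\ge\tfrac{1}{n}$ exactly when $(d-1)(n+1)\le n^{d-1}+1$, with equality corresponding to $s^{*}=\tfrac{1}{n}$. \textbf{The main obstacle} is to establish this inequality, with its equality case, for all even $d\ge 4$ and all $n\ge 2$: here I would note that, for fixed $d$, the map $n\mapsto n^{d-1}+1-(d-1)(n+1)$ has derivative $(d-1)(n^{d-2}-1)\ge 0$ for $n\ge 1$ and is therefore nondecreasing, so it suffices to check the value at $n=2$, namely $2^{d-1}-3d+4$; this quantity equals $0$ at $d=4$ and increases by $2^{d-1}-3>0$ when $d$ increases by one (for $d\ge 3$), hence is $\ge 0$ for $d\ge 4$ and $>0$ for $d\ge 5$. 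Consequently $s^{*}\ge\tfrac{1}{n}$ always, with $s^{*}=\tfrac{1}{n}$ if and only if $(n,d)=(2,4)$.

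For (vi), with $d$ odd and $F(s_{1},\dots,s_{m}):=g(1-\sum_{k}s_{k})+\sum_{k}g(s_{k})$: the map $\sum_{k}g(s_{k})$ is strictly convex on $\mathds R^{m}$ because $g$ is strictly convex by (iv) and the $m$ summands depend on separate coordinates, while $g(1-\sum_{k}s_{k})$ is convex as a composition of the convex $g$ with an affine map; a sum of a strictly convex and a convex function is strictly convex, so $F$ is strictly convex. Moreover $d-1$ is even, so $g(s)\to+\infty$ as $|s|\to\infty$, whence $F$ is coercive and attains a unique minimum, which is its only critical point. From $\partial_{s_{j}}F=g'(s_{j})-g'(1-\sum_{k}s_{k})=0$ and the injectivity of $g'$ (valid since $d-2$ is odd, so $t\mapsto t^{d-2}$ is a bijection of $\mathds R$) one obtains $s_{j}=1-\sum_{k}s_{k}$ for every $j$; hence all $s_{j}$ are equal, and $s_{j}=1-ms_{j}$ gives $s_{j}=\tfrac{1}{m+1}$. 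Substituting back yields the minimum value $(m+1)g(\tfrac{1}{m+1})$, and since $\tfrac{1}{m+1}>0$ and $\sum_{k}\tfrac{1}{m+1}=\tfrac{m}{m+1}<1$, the minimiser lies in the interior of $\Delta_{m}$, which finishes (vi).
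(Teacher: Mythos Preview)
Your proof is correct and follows essentially the same approach as the paper: the same function $h(s)=sg'(s)-g(s)$ with $h'(s)=sg''(s)$ drives part (v), and the convexity/critical-point argument in (vi) matches the paper's. The only minor variation is in the endpoint analysis of (v): to show $(d-1)(n+1)\le n^{d-1}+1$ with equality iff $(n,d)=(2,4)$, you fix $d$ and vary $n$ via the derivative $(d-1)(n^{d-2}-1)$, whereas the paper fixes $n$ and shows the ratio $c_{n,d}=\tfrac{(d-1)(n+1)-1}{n^{d-1}}$ is strictly decreasing in $d$; both arguments are equally elementary.
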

\begin{proof}
\begin{enumerate}[(i)]
\item
The claim directly follows from \eqref{eq:g}.
\item
If $d\ge 3$, we have $g'(s)=(n+1)(d-1)((n+1)s-1)^{d-2}$, so $g'(\frac{1}{n+1})=0$.
\item
If $d$ is even, we see that
\begin{equation*}
g(s)
=\big((n+1)s-1)^{d-1}+1,\quad s\in\mathds R,
\end{equation*}
is a composition of the strictly increasing functions $s\mapsto (n+1)s-1$ and $t\mapsto t^{d-1}+1$. Therefore, $s>0$ implies that $g(s)>g(0)=0$.
\item 
If $d$ is odd, $g$ is stricly convex as a sum of the strictly convex function $s\mapsto ((n+1)s-1)^{d-1}$ and a constant. In view of $g'(0)=(n+1)(d-1)(-1)^d<0$ and of $g(1)>0$, see (i), the convexity of $g$ implies that there exists exactly one further zero of $g$ in the open interval $(0,1)$, namely $s=\frac{2}{n+1}$, because the oddity of $d$ and the identity
\begin{equation*}
a^k-b^k=(a-b)\sum_{j=0}^k a^jb^{k-1-j}
\end{equation*}
entail that
\begin{equation*}
g(s)
=
\big((n+1)s-1\big)^{d-1}-1
=
\big((n+1)s-2\big)\sum_{j=0}^{d-2}\big((n+1)s-1\big)^j.
\end{equation*}
By the continuity and strict convexity of $g$, it follows that $g(s)<0$ for $0<s<\frac{2}{n+1}$ and $g(s)>0$ for $\frac{2}{n+1}<s\le 1$.
\item 
In view of $g(0)=0$, see (i), $p(s):=\frac{g(s)}{s}$ defines a polynomial of degree $d-2$, and $p(s)=n+1$ if $d=2$. If $d\ge 3$, we compute that for $s>0$,
\begin{align*}
p'(s)
&=
\frac{g'(s)s-g(s)}{s^2}\\
&=
\frac{(d-1)(n+1)s\big((n+1)s-1\big)^{d-2}+(-1)^{d-1}-\big((n+1)s-1\big)^{d-1}}{s^2}\\
&=
\frac{(d-2)\big((n+1)s-1\big)^{d-1}+(d-1)\big((n+1)s-1\big)^{d-2}+(-1)^{d-1}}{s^2}.
\end{align*}
The derivative of the numerator $g'(s)s-g(s)$ reads as
\begin{align*}
\frac{\mathrm d}{\mathrm ds}\big(g'(s)s-g(s)\big)
&=
g''(s)s
=
(d-1)(d-2)(n+1)^2\big((n+1)s-1\big)^{d-3}s,
\end{align*}
having a single zero at $s=0$ and a $(d-3)$-fold zero at $s=\frac{1}{n+1}$. Hence, if $d=2k+1$ is odd, $k\ge 1$, $g'(s)s-g(s)$ is positive if $s>0$, so $p$ is strictly increasing on $[0,\infty)$. If $d=2k$ is even, $k\ge 2$, $g''(s)s$ is negative on $(0,\frac{1}{n+1}]$ and positive on $(\frac{1}{n+1},\infty)$. 
Therefore, in view of
\begin{equation*}
g'\Big(\frac{1}{n+1}\Big)\frac{1}{n+1}-g\Big(\frac{1}{n+1}\Big)=-1
\end{equation*}
and
\begin{equation*}
g'\Big(\frac{2}{n+1}\Big)\frac{2}{n+1}
-g\Big(\frac{2}{n+1}\Big)=2d-4>0,
\end{equation*}
there exists a $s^*\in(\frac{1}{n+1},\frac{2}{n+1})$ such that $p$ is strictly decreasing on $[0,s^*]$ and strictly increasing on $[s^*,\infty)$. Moreover,
\begin{equation*}
g'\Big(\frac{1}{n}\Big)\frac{1}{n}-g\Big(\frac{1}{n}\Big)
=\frac{(d-1)(n+1)-1}{n^{d-1}}-1
\end{equation*}
is nonpositive, and negative if and only if $(n,d)=(2,4)$. This can be seen as follows: Setting
\begin{equation*}
c_{n,d}:=\frac{(d-1)(n+1)-1}{n^{d-1}},\quad n\ge 2,\quad d\ge 4,
\end{equation*}
we observe that
\begin{equation*}
c_{n,4}=\frac{3n+2}{n^3}=\frac{3+\frac{2}{n}}{n^2}
\le\frac{4}{n^2}
\le 1,
\end{equation*}
with equality if and only if $n=2$, and
\begin{align*}
\frac{c_{n,d+1}}{c_{n,d}}
&=
\frac{d(n+1)-1}{n((d-1)(n+1)-1)}
\le
\frac{dn+d-1}{dn+d-1+n}
<1,\quad n\ge 2,\quad d\ge 4.
\end{align*}
Therefore, if $n\ge 2$ and $d=2k\ge 4$ is even, we have $\frac{1}{n}\le s^*$, with equality if and only if $(n,d)=(2,4)$.
\item The $m$-variate polynomial
\begin{equation*}
f(\mathbf s):=g\Big(1-\sum_{k=1}^m s_k\Big)+\sum_{k=1}^m g(s_k),\quad
\mathbf s=(s_1,\ldots,s_m),
\end{equation*}
is convex as a sum of $m+1$ convex functions. $f$ is strictly convex because if $\mathbf s,\mathbf s'\in\mathds R^m$ with $\mathbf s\ne\mathbf s'$, we have $s_k\ne s_k'$ for at least one $1\le k\le m$, so that for each $0<\lambda<1$, the strict convexity of $g$ implies that
\begin{align*}
\lefteqn{
f\big(\lambda\mathbf s+(1-\lambda)\mathbf s')
}\\
&=
g\Big(1-\sum_{k=1}^m\big(\lambda s_k+(1-\lambda)s_k'\big)\Big)
+\sum_{k=1}^m g\big(\lambda s_k+(1-\lambda)s_k'\big)\\
&=
g\Big(\lambda\Big(1-\sum_{k=1}^m s_k\Big)+(1-\lambda)\Big(1-\sum_{k=1}^m s_k'\Big)\Big)
+
\sum_{k=1}^m g\big(\lambda s_k+(1-\lambda)s_k'\big)\\
&<
\lambda g\Big(1-\sum_{k=1}^m s_k\Big)
+(1-\lambda)g\Big(1-\sum_{k=1}^m s_k'\Big)
+\sum_{k=1}^m\big(\lambda g(s_k)+(1-\lambda)g(s_k')\big)\\
&=
\lambda f(\mathbf s)+(1-\lambda)f(\mathbf s').
\end{align*}
$f$ is bounded from below because $g$ is, and the minimality condition
\begin{equation*}
\mathbf 0=\nabla f(\mathbf s^*)=\Big(g'(s_k^*)-g'\Big(1-\sum_{j=1}^m s_j^*\Big)\Big)_{1\le k\le m}
\end{equation*}
together with the injectivity of $g'$ imply that $s_k^*=\frac{1}{m+1}$ for all $1\le k\le m$, and hence
$f(\mathbf s^*)=(m+1)g(\frac{1}{m+1})$. 
\end{enumerate}
\end{proof}

By means of Lemma \eqref{lemma:g}, we are now able to enumerate all solutions of the system \eqref{eq:critical}, i.e., all zeros $\mathbf s:=(s_1,\ldots,s_{n-1})$ of the vector function
\begin{equation}\label{eq:h}
\mathbf h(\mathbf s):=
\Big(s_k\Big(g\Big(1-\sum_{j=1}^{n-1}s_j\Big)+\sum_{j=1}^{n-1}g(s_j)\Big)-g(s_k)\Big)_{1\le k\le n-1}
\end{equation}
in the $(n-1)$-dimensional unit simplex $\Delta_{n-1}=\conv\{\mathbf 0,\mathbf e_1,\ldots,\mathbf e_{n-1}\}$ from \eqref{eq:Deltam}.

\begin{propo}\label{propo:h}
Let $d\ge 2$, and let $h$ be defined as in \eqref{eq:h}.
\begin{enumerate}[(i)]
\item $\mathbf h$ vanishes at least at those $\mathbf s\in\Delta_{n-1}$ such that with $s_n:=1-\sum_{k=1}^{n-1} s_k$, there exists a nonempty subset $K\subseteq\{1,\ldots n\}$ and
\begin{equation}\label{eq:hzeros}
s_k=\begin{cases}
\frac{1}{|K|},&k\in K,\\
0,&k\in\{1,\ldots,n\}\setminus K.
\end{cases}
\end{equation}
\item If $d=2$, $\mathbf h$ is identically zero.
\item If $d$ is odd, there are no further zeros of $\mathbf h$ in $\Delta_{n-1}$ than those from (i).
\item If $d\ge 4$ is even, $\mathbf h$ vanishes at $\mathbf s\in\Delta_{n-1}$ if and only if with $s^*\in [\frac{1}{n},\frac{2}{n+1})$ from Lemma \ref{lemma:g}(v) and $s_n:=1-\sum_{k=1}^{n-1}s_k$, there exist disjoint subsets $K_1\subset\{1,\ldots,n\}$ and $K_2\subset\{1,\ldots,n\}$, at least one of these being nonempty, such that either
\begin{equation}\label{eq:evendeigencond1}
K_1=\emptyset,\quad s_k=\begin{cases}
\frac{1}{|K_2|}>s^*,&k\in K_2,\\
0,&k\in\{1,\ldots,n\}\setminus K_2,
\end{cases}
\end{equation}
or
\begin{equation}\label{eq:evendeigencond2}
K_2=\emptyset,\quad s_k=\begin{cases}
\frac{1}{|K_1|}\le s^*,&k\in K_1,\\
0,&k\in\{1,\ldots,n\}\setminus K_1,
\end{cases}
\end{equation}
or
\begin{equation}\label{eq:evendeigencond3}
K_1,K_2\ne\emptyset,\quad s_k=\begin{cases}
s_{k_1},&k\in K_1,\\
s_{k_2},&k\in K_2,\\
0,&k\in\{1,\ldots,n\}\setminus(K_1\cup K_2),
\end{cases}
\end{equation}
where $s_{k_1}\in (0,s^*)$ is a zero of the polynomial
\begin{equation}\label{eq:evendr}
r(s):=\frac{g(s)}{s}-\frac{|K_2|g\big(\frac{1-|K_1|s}{|K_2|}\big)}{1-|K_1|s}
\end{equation}
and
\begin{equation}\label{eq:evendeigencond3b}
s_{k_2}=\frac{1-|K_1|s_{k_1}}{|K_2|}
\end{equation}
is contained in $(s^*,1]$.
\end{enumerate}
\end{propo}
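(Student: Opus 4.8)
The plan is to recast the condition $\mathbf h(\mathbf s)=\mathbf 0$ in terms of the polynomial $p(s)=g(s)/s$ from Lemma~\ref{lemma:g}(v). For $\mathbf s\in\Delta_{n-1}$ put $s_n:=1-\sum_{k=1}^{n-1}s_k\ge 0$ and $S:=\sum_{j=1}^n g(s_j)$, so that the $k$-th component of $\mathbf h$ is $s_kS-g(s_k)$, $1\le k\le n-1$. Summing these $n-1$ expressions and using $\sum_{k=1}^{n-1}s_k=1-s_n$ shows that their vanishing forces $s_nS=g(s_n)$ as well; hence
\begin{equation*}
\mathbf h(\mathbf s)=\mathbf 0\quad\Longleftrightarrow\quad s_kS=g(s_k)\ \text{ for all }1\le k\le n .
\end{equation*}
Since $g(0)=0$ by Lemma~\ref{lemma:g}(i), the equations attached to the vanishing coordinates are vacuous, while on the nonempty index set $K:=\{k:s_k>0\}$ we may divide by $s_k$ to obtain the equivalent condition $p(s_k)=S$ for all $k\in K$, with $S=\sum_{j\in K}g(s_j)=\sum_{j\in K}s_jp(s_j)$. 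Thus $\mathbf h(\mathbf s)=\mathbf 0$ precisely when all positive coordinates of $\mathbf s$ lie in one level set of $p$.

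With this reformulation parts (i)--(iii) are short. For (i): if $s_k=1/|K|$ for $k\in K$ then $p(s_k)=p(1/|K|)$ for each such $k$ and $S=|K|g(1/|K|)=p(1/|K|)$, so the system holds, and $\mathbf s\in\Delta_{n-1}$. For (ii): Lemma~\ref{lemma:g}(v) gives $p\equiv n+1$ when $d=2$, hence $p(s_k)=n+1$ for all $k$ and $S=(n+1)\sum_{j\in K}s_j=n+1$, so $\mathbf h$ vanishes on all of $\Delta_{n-1}$. For (iii): for odd $d$, Lemma~\ref{lemma:g}(v) makes $p$ strictly increasing, hence injective, on $[0,\infty)$, so $p(s_k)=S$ for all $k\in K$ forces the positive coordinates to coincide, giving exactly the family in (i).

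The heart of the statement is (iv). For even $d\ge4$, Lemma~\ref{lemma:g}(v) says $p$ is strictly decreasing on $[0,s^*]$ and strictly increasing on $[s^*,\infty)$ with $s^*\in[\frac{1}{n},\frac{2}{n+1})$; in particular $s^*$ is the unique global minimizer of $p$ on $[0,\infty)$, and every level set $\{s>0:p(s)=S\}$ meets each monotonicity branch at most once, hence has at most two elements. So the positive coordinates of a zero $\mathbf s$ of $\mathbf h$ take at most two distinct values. If they take one value $v$, then $v=1/|K|$, and we record this as \eqref{eq:evendeigencond1} with $K_2=K$ when $1/|K|>s^*$ and as \eqref{eq:evendeigencond2} with $K_1=K$ when $1/|K|\le s^*$. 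If they take two distinct values $a<b$, injectivity on each branch forbids both lying on the same branch, and since $p$ attains its minimum only at $s^*$ neither can equal $s^*$; hence $a\in(0,s^*)$ and $b\in(s^*,\infty)$, with $b\le\sum_{j\in K}s_j=1$. Setting $K_1:=\{k:s_k=a\}$ and $K_2:=\{k:s_k=b\}$, the normalization reads $|K_1|a+|K_2|b=1$, i.e., \eqref{eq:evendeigencond3b}, while $p(a)=p(b)$, after inserting $b=(1-|K_1|a)/|K_2|$, is exactly $r(a)=0$ for $r$ from \eqref{eq:evendr}; this is \eqref{eq:evendeigencond3}. Conversely, \eqref{eq:evendeigencond1} and \eqref{eq:evendeigencond2} are special cases of (i), and for \eqref{eq:evendeigencond3} the relation $r(s_{k_1})=0$ gives $p(s_{k_1})=p(s_{k_2})$, whence
\begin{equation*}
S=|K_1|g(s_{k_1})+|K_2|g(s_{k_2})=p(s_{k_1})\bigl(|K_1|s_{k_1}+|K_2|s_{k_2}\bigr)=p(s_{k_1}),
\end{equation*}
so $p(s_k)=S$ on $K=K_1\cup K_2$; moreover $\mathbf s\in\Delta_{n-1}$ since its coordinates are nonnegative with $\sum_{k=1}^{n-1}s_k\le1$.

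I expect the two-value analysis in (iv) to be the main obstacle: it needs the sharp qualitative description of $p$ from Lemma~\ref{lemma:g}(v) both to rule out a third distinct positive value and to decide which branch each value sits on, so that the ranges $s_{k_1}\in(0,s^*)$ and $s_{k_2}\in(s^*,1]$ are forced rather than assumed, together with the algebraic step that rewrites the level equality $p(a)=p(b)$ as a root of the one-variable polynomial $r$ in the prescribed range. The remaining work --- checking that the configurations listed in (i) and (iv) genuinely lie in $\Delta_{n-1}$ and conversely solve the system --- is routine once the reformulation of the first paragraph is available.
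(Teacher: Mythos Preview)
Your proof is correct and follows essentially the same route as the paper: both reduce $\mathbf h(\mathbf s)=\mathbf 0$ to the condition that all positive coordinates $s_k$ lie in a single level set of $p(s)=g(s)/s$, and then use the monotonicity structure of $p$ from Lemma~\ref{lemma:g}(v) to classify these level sets. Your treatment of~(iii) is in fact slightly more direct than the paper's, which appeals to the convexity lower bound of Lemma~\ref{lemma:g}(vi) to force $s_k\ge 1/|K|$, whereas you obtain equality of the positive coordinates immediately from the injectivity of $p$.
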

\begin{proof}
\begin{enumerate}[(i)]
\item Let $K\subseteq\{1,\ldots n\}$ be nonempty, and let $s_k\in[0,1]$ be given by \eqref{eq:hzeros}. Then we have
\begin{equation*}
s_k\ge 0,\quad 
\sum_{k=1}^n s_k=1,\quad
\sum_{k=1}^{n-1}s_k=1-s_n\le 1,
\end{equation*}
i.e., $\mathbf s:=(s_1,\ldots,s_{n-1})\in\Delta_{n-1}$. By using that $g(0)=0$, see Lemma \ref{lemma:g}(i), we compute that for all $1\le k\le n$, regardless of whether $k\in K$ or $k\notin K$,
\begin{align*}
s_k\sum_{j=1}^n g(s_j)
=
s_k\sum_{j\in K}g(s_j)
=
s_k|K|g\Big(\frac{1}{|K|}\Big)
=
g(s_k),
\end{align*}
so that $\mathbf h(\mathbf s)=\mathbf 0$.
\item
If $d=2$, we have $g(s)=(n+1)s$. For each $\mathbf s=(s_1,\ldots,s_{n-1})\in\Delta_{n-1}$, we obtain that with $s_n:=1-\sum_{j=1}^{n-1}s_k$,
\begin{equation*}
s_k\sum_{j=1}^n g(s_j)-g(s_k)
=
s_k(n+1)\sum_{j=1}^n s_j-(n+1)s_k
=
0,
\end{equation*}
so that $\mathbf h$ vanishes identically. 
\item Suppose that $d$ is odd and that $\mathbf s\in\Delta_{n-1}$ is a zero of $\mathbf h$. Then with $s_n:=1-\sum_{k=1}^{n-1}s_k$, the index set $K:=\big\{1\le k\le n:s_k\ne 0\}$ is nonempty, and $g(0)=0$ implies that
\begin{equation*}
\frac{g(s_k)}{s_k}=\sum_{j=1}^n g(s_j)=\sum_{j\in K}g(s_j),\quad k\in K.
\end{equation*}
The left-hand side of this equation is strictly increasing in $s_k$, and the right-hand side is strictly convex in $(s_j)_{j\in K}$ with lower bound $|K|g(\frac{1}{|K|})$, see Lemma \ref{lemma:g}(v)/(vi). Therefore, we obtain the lower bound
\begin{equation*}
s_k\ge\frac{1}{|K|},\quad k\in K,
\end{equation*}
which, in view of $\sum_{k\in K}s_k=1$, is only achievable if $s_k$ is of the form \eqref{eq:hzeros}.
\item If $d\ge 4$ is even and $\mathbf s\in\Delta_{n-1}$ is a zero of $\mathbf h$, as in (iii) we see that with $s_n:=1-\sum_{k=1}^{n-1}s_k$ and $K:=\{1\le k\le n:s_k\ne 0\}\ne\emptyset$,
\begin{equation*}
\frac{g(s_k)}{s_k}=\sum_{j=1}^n g(s_j)=\sum_{j\in K}g(s_j),\quad k\in K.
\end{equation*}
Lemma \ref{lemma:g}(v) tells us that for some $s^*\in [\frac{1}{n},\frac{2}{n+1})$, the polynomial $p(s):=\frac{g(s)}{s}$ of degree $d-2$ is strictly decreasing on $[0,s^*]$ and strictly increasing on $[s^*,\infty)$. Let us split $K$ into
\begin{equation*}
K=K_1\cup K_2,\quad
K_1:=\{k\in K:s_k\le s^*\},\quad K_2:=\{k\in K:s_k>s^*\}.
\end{equation*}
At least one of the subsets $K_1$ and $K_2$ is nonempty, because $K$ is. We consider the three possible special cases separately.
\begin{itemize}
\item If $K_1$ is empty, we obtain that by the injectivity of $p$ on $(s^*,\infty)$, there exists $k_2\in K_2$ such that $s_k=s_{k_2}$ for all $k\in K_2$. We obtain that
\begin{equation*}
\frac{g(s_{k_2})}{s_{k_2}}=\sum_{j\in K}g(s_j)=|K_2|g(s_{k_2})
\end{equation*}
and hence
\begin{equation*}
s_k=\frac{1}{|K_2|},\quad k\in K_2,
\end{equation*}
after dividing both sides by $g(s_{k_2})>0$, which is situation \eqref{eq:evendeigencond1}.
\item If $K_2$ is empty, we can argue in an analogous way: the injectivity of $p$ on $[0,s^*]$ implies the existence of some $k_1\in K_1\ne\emptyset$ with $s_k=s_{k_1}$ for all $k\in K_1$ and thus
\begin{equation*}
s_k=\frac{1}{|K_1|},\quad k\in K_1,
\end{equation*}
which is situation \eqref{eq:evendeigencond2}.
\item Finally, assume that both $K_1$ and $K_2$ are nonempty. As in the previous special cases, the injectivity of $p$ on $[0,s^*]$ and on $(s^*,\infty)$ implies the existence of certain $k_1\in K_1$ and $k_2\in K_2$ with $s_k=s_{k_1}$ for all $k\in K_1$ and $s_k=s_{k_2}$ for all $k\in K_2$, such that
\begin{equation*}
\frac{g(s_{k_1})}{s_{k_1}}
=
\frac{g(s_{k_2})}{s_{k_2}}
=
|K_1|g(s_{k_1})+|K_2|g(s_{k_2}).
\end{equation*}
We have $s_{k_1}\ne s^*$ because of
\begin{equation*}
\frac{g(s_{k_1})}{s_{k_1}}=\frac{g(s_{k_2})}{s_{k_2}}>\frac{g(s^*)}{s^*}.
\end{equation*}
By using that $|K_1|s_{k_1}+|K_2|s_{k_2}=\sum_{j=1}^n s_j=1$, we observe that $s_{k_1}\in (0,s^*)$ is a zero of the even-degree polynomial
\begin{equation*}
r(s):=p(s)-p\Big(\frac{1-|K_1|s}{|K_2|}\Big)=\frac{g(s)}{s}-\frac{|K_2|g\big(\frac{1-|K_1|s}{|K_2|}\big)}{1-|K_1|s}
\end{equation*}
from \eqref{eq:evendr}, and we have
\begin{equation*}
s_{k_2}=\frac{1-|K_1|s_{k_1}}{|K_2|},
\end{equation*}
showing \eqref{eq:evendeigencond3} and \eqref{eq:evendeigencond3b}.
\end{itemize}
Conversely, assume that arbitrary disjoint subsets $K_1,K_2\subset\{1,\ldots,n\}$ are given, with $K_1\ne\emptyset$ or $K_2\ne\emptyset$. If $K_1=\emptyset$ or if $K_2=\emptyset$, setting $s_k\in[0,1]$ as in \eqref{eq:evendeigencond1} or in \eqref{eq:evendeigencond2} and following the proof of part (i) with $K$ replaced by $K_1$ or by $K_2$, respectively, we see that $\mathbf h(\mathbf s)=\mathbf 0$. If $K_1$ and $K_2$ are nonempty, and if $s_{k_1}\in(0,s^*)$ is an arbitrary zero of the polynomial $r$ from \eqref{eq:evendr}, such that $s_{k_2}$ from \eqref{eq:evendeigencond3b} is contained in $(s^*,1]$, we see that $s_k$ from \eqref{eq:evendeigencond3} fulfills
\begin{equation*}
s_k\ge 0,\quad 
\sum_{k=1}^n s_k=|K_1|s_{k_1}+|K_2|s_{k_2}=1,\quad
\sum_{k=1}^{n-1}s_k=1-s_n\le 1,
\end{equation*}
i.e., $\mathbf s:=(s_1,\ldots,s_{n-1})\in\Delta_{n-1}$. Moreover, regardless of whether $k\in K_1$, $k\in K_2$ or $k\notin K_1\cup K_2$, we have
\begin{align*}
s_k\sum_{j=1}^n g(s_j)
=
s_k\big(|K_1|g(s_{k_1})+|K_2|g(s_{k_2})\big)
=
g(s_k),
\end{align*}
so that $\mathbf h(\mathbf s)=\mathbf 0$.
\end{enumerate}
\end{proof}

\begin{remark}\label{rem:dodd}
In case that $d\ge 3$ is odd, the zeros of $\mathbf h$ given by \eqref{eq:hzeros} are precisely the midpoints of the unit simplex $\Delta_{n-1}$ and of all its lower-dimensional facets, see also Figure \ref{fig:Simpodd}. In particular, if $n=2$, we obtain that $h$ vanishes exactly at
\begin{equation*}
s_1\in\{0,{\textstyle\frac{1}{2}},1\}.
\end{equation*}
If $n=3$, $\mathbf h$ vanishes exactly at
\begin{equation*}
(s_1,s_2)\in\big\{(0,0),(1,0),(0,1),
({\textstyle\frac{1}{2}},0),
(0,{\textstyle\frac{1}{2}}),
({\textstyle\frac{1}{2}},{\textstyle\frac{1}{2}}),
({\textstyle\frac{1}{3}},{\textstyle\frac{1}{3}})\big\}.
\end{equation*}
\end{remark}

\begin{figure}[h]
\centering
\subfigure[Zeros of $\mathbf h$ if $n=2$ and $d$ odd]{
 \includegraphics[width=.4\linewidth]{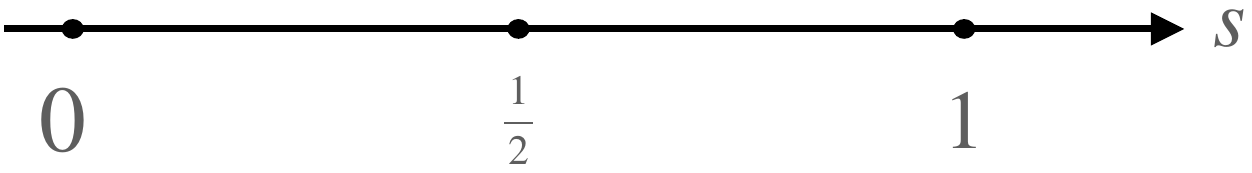}
}
\subfigure[Zeros of $\mathbf h$ if $n=3$ and $d$ odd]{
 \includegraphics[width=.4\linewidth]{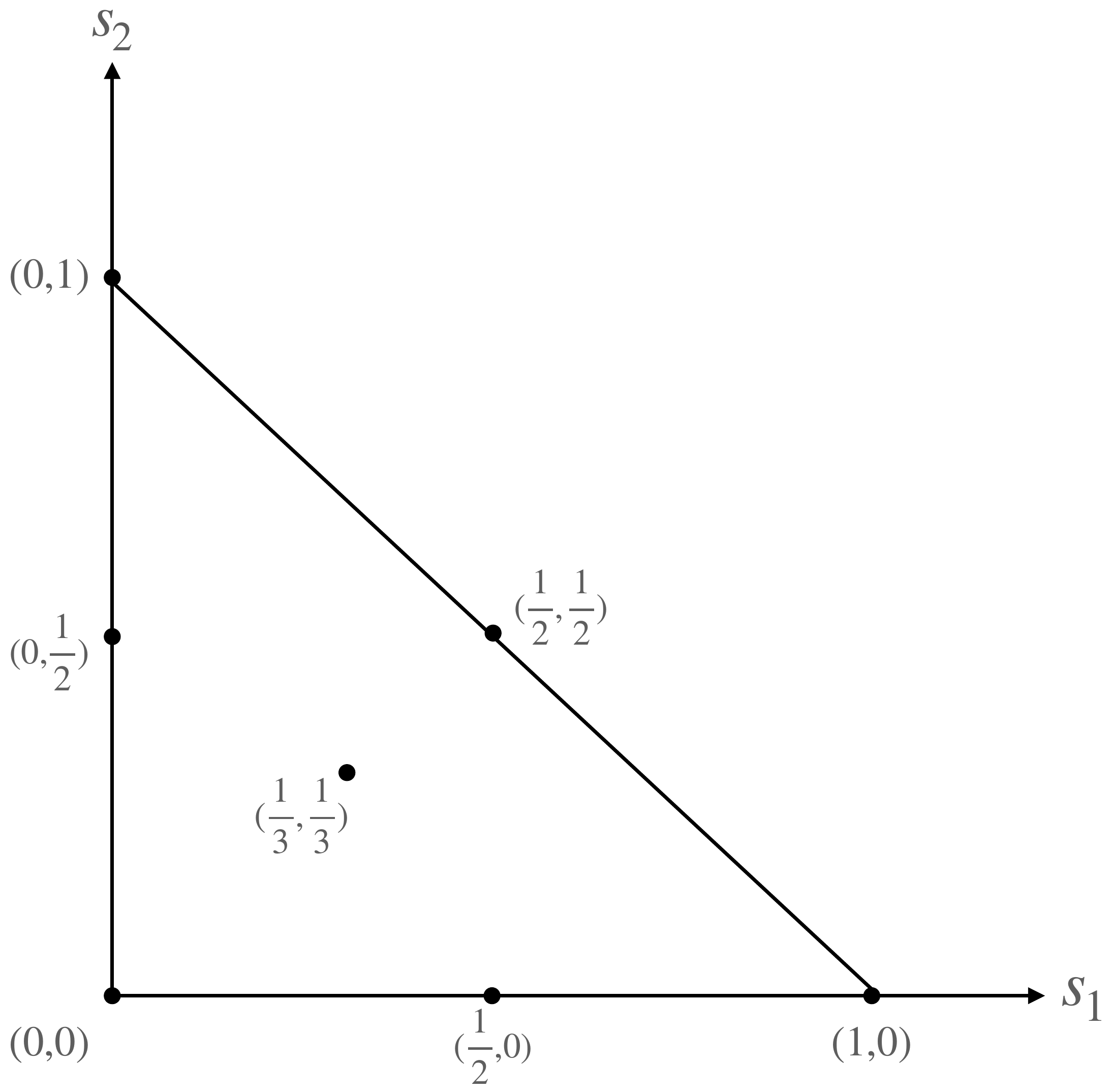}
}
\caption{Geometric interpretation of the zero set of $\mathbf{h}$ for $d$ odd}
\label{fig:Simpodd}
\end{figure}

\begin{remark}
If $d\ge 4$ is even, the situations \eqref{eq:evendeigencond1}, \eqref{eq:evendeigencond2} or \eqref{eq:evendeigencond3} can only occur if the respective conditions $\frac{1}{|K_2|}> s^*$, $\frac{1}{|K_1|}\le s^*$ or $s_{k_1}<s^*<s_{k_2}$ are fulfilled.
\end{remark}

\begin{ex}\label{ex:deven}
If $n=3$ and $d=4$, we compute that
\begin{equation*}
p(s)=\frac{g(s)}{s}
=\frac{(4s-1)^3+1}{s}
=64s^2-48s+12
\end{equation*}
is strictly convex with a unique global minimum at $s^*=\frac{3}{8}<\frac{1}{2}$. Therefore, situation \eqref{eq:evendeigencond1} occurs if and only if $K_1=\emptyset$ and $|K_2|\in\{1,2\}$ and thus
\begin{equation*}
K_2\in\big\{
\{1\},\{2\},\{3\},
\{1,2\},\{1,3\},\{2,3\}
\big\},
\end{equation*}
which corresponds to the subset of zeros
\begin{equation*}
\big\{(1,0),(0,1),(0,0),
({\textstyle\frac{1}{2},\frac{1}{2}}),
({\textstyle\frac{1}{2},0}),
({\textstyle 0,\frac{1}{2}})
\big\}
\end{equation*}
of
\begin{align*}
\mathbf h(\mathbf s)
&=
\begin{pmatrix}
s_1\big(g(s_1)+g(s_2)+g(1-s_1-s_2)\big)-g(s_1)\\
s_2\big(g(s_1)+g(s_2)+g(1-s_1-s_2)\big)-g(s_2)
\end{pmatrix}\\
&=
\begin{pmatrix}
-192s_1^3s_2-192s_1^2s_2^2+32s_1^3+288s_1^2s_2+96s_1s_2^2-48s_1^2-96s_1s_2+16s_1\\
-192s_1s_2^3-192s_1^2s_2^2+32s_2^3+288s_1s_2^2+96s_1^2s_2-48s_2^2-96s_1s_2+16s_2
\end{pmatrix}.
\end{align*}
The constraint $\frac{1}{|K_1|}\le s^*=\frac{3}{8}$ in situation \eqref{eq:evendeigencond2} can only be fulfilled if $K_1=\{1,2,3\}$ and $K_2=\emptyset$, which corresponds to the zero $({\textstyle\frac{1}{3},\frac{1}{3}})$ of $\mathbf h$. Finally, as concerns situation \eqref{eq:evendeigencond3}, the only three possible configurations are $(|K_1|,|K_2|)\in\{(1,1),(1,2),(2,1)\}$. If $|K_1|=|K_2|=1$, the polynomial $r$ from \eqref{eq:evendr} reads as
\begin{equation*}
r(s)=\frac{g(s)}{s}-\frac{g(1-s)}{1-s}=32s-16,
\end{equation*}
having the unique zero $s=\frac{1}{2}>s^*$, so that this case cannot occur. If $|K_1|=1$ and $|K_2|=2$, we obtain
\begin{equation*}
r(s)=\frac{g(s)}{s}-\frac{2g(\frac{1-s}{2})}{1-s}=48s^2-40s+8,
\end{equation*}
having the zeros $\frac{1}{3}<s^*$ and $\frac{1}{2}>s^*$. But since $s:=\frac{1}{3}$ would correspond to $\frac{1-s}{2}=\frac{1}{3}$ which is not strictly larger than $s^*$, this case cannot occur either. If $|K_1|=2$ and $|K_2|=1$, we obtain
\begin{equation*}
r(s)=\frac{g(s)}{s}-\frac{g(1-2s)}{1-2s}=-192s^2+112s-16,
\end{equation*}
having the zeros $\frac{1}{3}$ and $\frac{1}{4}$, both less than $s^*$. The first zero $s:=\frac{1}{3}$ of $r$ would correspond to $1-2s=\frac{1}{3}$, which is not strictly larger than $s^*$, which again is not allowed. The second zero $s_{k_1}:=\frac{1}{4}$ of $r$ yields the corresponding argument $s_{k_2}:=1-2s_{k_1}=\frac{1}{2}$ and hence induces the remaining zeros
\begin{equation*}
\big\{
({\textstyle \frac{1}{4},\frac{1}{4}}),
({\textstyle \frac{1}{4},\frac{1}{2}}),
({\textstyle \frac{1}{2},\frac{1}{4}})
\big\}
\end{equation*}
of $\mathbf h$, see also Figure \ref{fig:Simpeven}.
\end{ex}

\begin{figure}[h]
\centering
\subfigure[Zeros of $\mathbf h$ if $n=2$ and $d=4$ ]{
 \includegraphics[width=.4\linewidth]{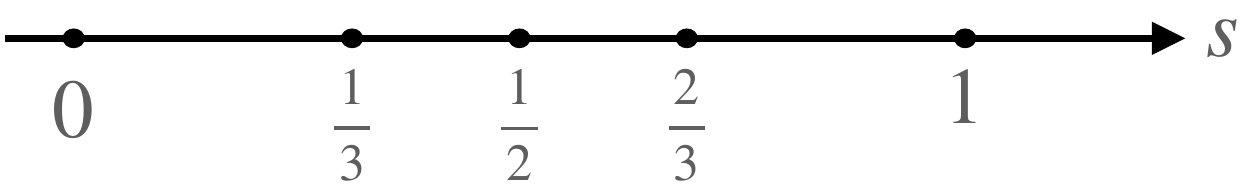}
}
\subfigure[Zeros of $\mathbf h$ if $n=3$ and $d=4$ ]{
 \includegraphics[width=.4\linewidth]{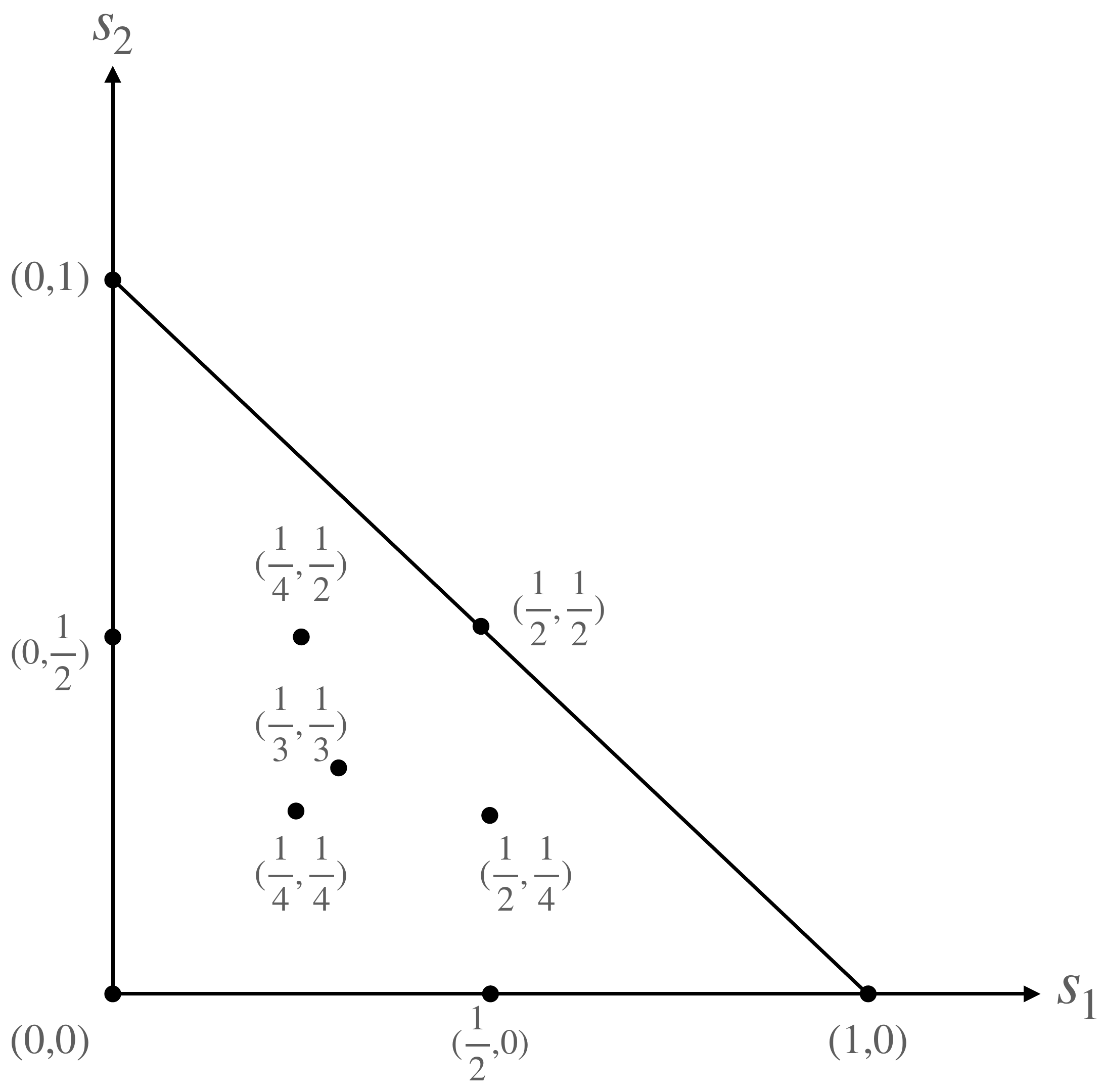}
}
\caption{Geometric interpretation of the zero set of $\mathbf{h}$ for $d=4$}
\label{fig:Simpeven}
\end{figure}

We are now in the position to enumerate all normalized eigenpairs of $\mathcal T$ in the generic case $n,d\ge 2$.
\begin{theo}\label{theo:eigenpairs}
Let $n,d\ge 2$, and let $\mathcal T=\sum_{k=1}^{n+1} \mathbf v_k^{\otimes d}$. 
\begin{enumerate}[(i)]
\item In the matrix case $d=2$, each $\mathbf v\in\mathds R^n$ with $\|\mathbf v\|=1$ is an eigenvector of $\mathcal T$, with positive eigenvalue
\begin{equation}\label{eq:eigenvaluematrixcase}
\mu=1+\frac{1}{n}.
\end{equation}
\item If $d\ge 3$ is odd, $\mathbf v\in\mathds R^n$ with $\|\mathbf v\|=1$ is an eigenvector of $\mathcal T$ if and only if there exists a nonempty subset $K\subset\{1,\ldots,n+1\}$ of cardinality at most $n$, such that
\begin{equation}\label{eq:eigenvectoroddd}
\mathbf v=\frac{\sum\limits_{k\in K}\mathbf v_k}{\Big\|\sum\limits_{k\in K}\mathbf v_k\Big\|}
=
\frac{\sum\limits_{k\in K}\mathbf v_k}{\sqrt{\frac{|K|(n+1-|K|)}{n}}},
\end{equation}
and the corresponding eigenvalue is given by
\begin{equation}\label{eq:eigenvalueoddd}
\mu=\frac{\big(n+1-|K|\big)^{d-1}-|K|^{d-1}}{n^{d/2}(|K|(n+1-|K|))^{d/2-1}}.
\end{equation}
The eigenvalue $0$ corresponds to the case $2|K|=n+1$. Therefore, if $n$ is even, all eigenvalues are different from $0$.
\item If $d\ge 4$ is even, $\mathbf v\in\mathds R^n$ with $\|\mathbf v\|=1$ is an eigenvector of $\mathcal T$ if and only if one of the following two conditions is met: Either there exists a nonempty subset $K\subset\{1,\ldots,n+1\}$ of cardinality at most $n$, such that $\mathbf v$ has the form \eqref{eq:eigenvectoroddd}, with positive eigenvalue
\begin{equation}\label{eq:eigenvalueevend1}
\mu=\frac{\big(n+1-|K|\big)^{d-1}+|K|^{d-1}}{n^{d/2}(|K|(n+1-|K|))^{d/2-1}}
\end{equation}
or there exist nonempty, disjoint subsets $K_1\subset\{1,\ldots,n+1\}$ and $K_2\subset\{1,\ldots,n+1\}$, each of cardinality at most $n$, such that with $s^*\in [\frac{1}{n},\frac{2}{n+1})$ from Lemma \ref{lemma:g}(v) and $0<s_{k_1}\le s^*<s_{k_2}\le 1$ with $|K_1|s_{k_1}+|K_2|s_{k_2}=1$ and $\frac{g(s_{k_1})}{s_{k_1}}=\frac{g(s_{k_2})}{s_{k_2}}$, we have
\begin{equation}\label{eq:eigenvectorevend2}
\mathbf v=\frac{s_{k_1}\sum\limits_{k\in K_1}\mathbf v_k+s_{k_2}\sum\limits_{k\in K_2}\mathbf v_k}{\Big\|s_{k_1}\sum\limits_{k\in K_1}\mathbf v_k+s_{k_2}\sum\limits_{k\in K_2}\mathbf v_k\Big\|}
=
\frac{s_{k_1}\sum\limits_{k\in K_1}\mathbf v_k+s_{k_2}\sum\limits_{k\in K_2}\mathbf v_k}{\sqrt{
\frac{(n+1)s_{k_1}^2|K_1|+(n+1)s_{k_2}^2|K_2|-1}{n}}}
\end{equation}
with positive eigenvalue
\begin{equation}\label{eq:eigenvalueevend2}
\mu=\frac{|K_1|((n+1)s_{k_1}-1)^d+|K_2|((n+1)s_{k_2}-1)^d+n+1-|K_1|-|K_2|}{n^{d/2}((n+1)s_{k_1}^2|K_1|+(n+1)s_{k_2}^2|K_2|-1)^{d/2}}.
\end{equation}
\end{enumerate}
\end{theo}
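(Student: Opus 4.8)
The plan is to assemble the complete eigenvector list from the machinery already built up and then to do the norm and eigenvalue bookkeeping. Every nonzero vector lies in one of the conical hulls $\{\sum_{k=1}^n\alpha_k\mathbf v_{\sigma(k)}:\alpha_k\ge 0\}$, so by Lemma~\ref{lemma:cyclic} and the $(d-1)$-homogeneity of the right-hand side of \eqref{eq:eigenvectorcoord1} it suffices to find all eigenvectors of the form $\mathbf w=\sum_{k=1}^n s_k\mathbf v_k$ with $s_k\ge 0$, $\sum_{k=1}^n s_k=1$; by Lemma~\ref{lemma:candidates} these are exactly the convex combinations whose coordinate vector $\mathbf s=(s_1,\ldots,s_{n-1})$ is a zero of $\mathbf h$ in $\Delta_{n-1}$, and Proposition~\ref{propo:h} lists all such $\mathbf s$. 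Each such $\mathbf w$ yields a normalized eigenpair $(\mathbf w/\|\mathbf w\|,\mu_{\mathbf w}/\|\mathbf w\|^{d-2})$ via \eqref{eq:normalizedeigenpair}, and running through all permutations $\sigma$ of $\{1,\ldots,n+1\}$ in Lemma~\ref{lemma:cyclic} turns the index set $K\subseteq\{1,\ldots,n\}$ (respectively the disjoint pair $K_1,K_2$) appearing in Proposition~\ref{propo:h} into an arbitrary nonempty $K\subseteq\{1,\ldots,n+1\}$ of cardinality at most $n$ (respectively an arbitrary disjoint pair), which produces the subsets in the statement.

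The second ingredient is a short computation that delivers norms and eigenvalues uniformly. Setting $s_{n+1}:=0$, the Gramian \eqref{eq:simplexframegramian} together with $\sum_{k=1}^n s_k=1$ gives $\langle\mathbf v_j,\mathbf w\rangle=\frac{(n+1)s_j-1}{n}$ for all $1\le j\le n+1$, whence, using \eqref{eq:g}, $\sum_{k=1}^{n+1}\mathbf v_k=\mathbf 0$ and the zero condition \eqref{eq:critical},
\begin{equation*}
\mathcal T\cdot\mathbf w^{d-1}=\frac{1}{n^{d-1}}\sum_{j=1}^{n+1}\big((n+1)s_j-1\big)^{d-1}\mathbf v_j=\frac{1}{n^{d-1}}\sum_{j=1}^{n+1}g(s_j)\mathbf v_j=\Big(\frac{1}{n^{d-1}}\sum_{j=1}^n g(s_j)\Big)\mathbf w,
\end{equation*}
recovering \eqref{eq:mu3}; similarly $\|\mathbf w\|^2=\frac{(n+1)\sum_{j=1}^n s_j^2-1}{n}$. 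For part (i), Proposition~\ref{propo:h}(ii) says $\mathbf h\equiv\mathbf 0$, so by the reduction above every unit vector is an eigenvector, with $\mu=\frac{1}{n}\sum_j g(s_j)=\frac{n+1}{n}$; equivalently $\mathcal T=\mathbf V\mathbf V^\top=\frac{n+1}{n}\mathbf I$ by \eqref{eq:simplexframeoperator}. For part (ii) one feeds the ``equal-weight'' zeros $s_k=\frac{1}{|K|}$ of Proposition~\ref{propo:h}(i)/(iii) into the two displays above and divides by $\|\mathbf w\|^{d-2}$; since $d-1$ is even, the term $\langle\mathbf v_{n+1},\mathbf w\rangle^{d-1}$ enters with a minus sign after using $\sum_k\mathbf v_k=\mathbf 0$, producing $\mu=\frac{(n+1-|K|)^{d-1}-|K|^{d-1}}{n^{d/2}(|K|(n+1-|K|))^{d/2-1}}$ and hence \eqref{eq:eigenvectoroddd}, \eqref{eq:eigenvalueoddd}; the vanishing of $\mu$ is immediate from $(n+1-|K|)^{d-1}=|K|^{d-1}\Leftrightarrow 2|K|=n+1$. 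For part (iii) the same substitution applied to the ``equal-weight'' zeros of Proposition~\ref{propo:h}(iv) gives \eqref{eq:eigenvalueevend1} (now with a plus sign since $d-1$ is odd, and positive because $(n+1-|K|)^{d-1}+|K|^{d-1}>0$ for $1\le|K|\le n$), while the ``two-weight'' zeros \eqref{eq:evendeigencond3} give \eqref{eq:eigenvectorevend2}.

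The step I expect to cost the most work is bringing the two-weight eigenvalue into the closed form \eqref{eq:eigenvalueevend2}. The raw expression from the displays above is $\mu=G\big/\big(n^{d/2}(n\|\mathbf w\|^2)^{d/2-1}\big)$ with $G:=|K_1|g(s_{k_1})+|K_2|g(s_{k_2})$ and $n\|\mathbf w\|^2=(n+1)(|K_1|s_{k_1}^2+|K_2|s_{k_2}^2)-1$. To match \eqref{eq:eigenvalueevend2} one multiplies numerator and denominator by $n\|\mathbf w\|^2$ and rewrites the numerator using the defining relations of the pair $(s_{k_1},s_{k_2})$, namely $\big((n+1)s_{k_i}-1\big)^{d-1}=Gs_{k_i}-1$ (from $g(s_{k_i})=Gs_{k_i}$, valid since $G=\frac{g(s_{k_1})}{s_{k_1}}=\frac{g(s_{k_2})}{s_{k_2}}$) together with $|K_1|s_{k_1}+|K_2|s_{k_2}=1$; a direct expansion of $G\cdot n\|\mathbf w\|^2$ then collapses to $|K_1|\big((n+1)s_{k_1}-1\big)^d+|K_2|\big((n+1)s_{k_2}-1\big)^d+n+1-|K_1|-|K_2|$. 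Positivity of $\mu$ follows from $G=p(s_{k_1})>p(s^*)>0$ (Lemma~\ref{lemma:g}(iii),(v)). A minor point requiring care is the orbit bookkeeping under Lemma~\ref{lemma:cyclic}: one should check that the cardinalities $1\le|K|\le n$ are all realized, and that degenerate two-weight configurations with $|K_1|+|K_2|=n+1$ merely reproduce eigenvectors already on the list \eqref{eq:eigenvectoroddd} (via $\sum_{k\in K_2}\mathbf v_k=-\sum_{k\in K_1}\mathbf v_k$), so they cause no inconsistency.
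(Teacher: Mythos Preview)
Your proposal is correct and follows essentially the same route as the paper: reduce to convex combinations via Lemma~\ref{lemma:cyclic} and homogeneity, read off the admissible coordinate vectors from Proposition~\ref{propo:h}, and then compute norms and eigenvalues. The only cosmetic differences are that the paper computes $\langle\mathcal T\cdot\mathbf z^{d-1},\mathbf z\rangle$ separately for each eigenvector family with $\mathbf z$ the unnormalized sum, whereas you front-load the uniform identities $\langle\mathbf v_j,\mathbf w\rangle=\frac{(n+1)s_j-1}{n}$ and $\|\mathbf w\|^2=\frac{(n+1)\sum_j s_j^2-1}{n}$ and, for the two-weight case, reach \eqref{eq:eigenvalueevend2} via the relation $((n+1)s_{k_i}-1)^{d-1}=Gs_{k_i}-1$ rather than by direct expansion; both are straightforward and your algebra checks out.
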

\begin{proof}
\begin{enumerate}[(i)]
\item If $d=2$, \eqref{eq:simplexframeoperator} tells us that $\mathcal T=(1+\frac{1}{n})\mathbf I$.
\item If $d\ge 3$ is odd, the first identity in \eqref{eq:eigenvectoroddd} follows by an application of Lemma \ref{lemma:cyclic} and Proposition \ref{propo:h}(iii). The second identity in \eqref{eq:eigenvectoroddd} can be verified by using \eqref{eq:simplexframegramian}, which yields
\begin{equation*}
\Big\|\sum_{k\in K}\mathbf v_k\Big\|^2
=
\sum_{k\in K}\sum_{j\in K}\langle\mathbf v_k,\mathbf v_j\rangle
=
\sum_{k\in K}\Big(1-\frac{|K|-1}{n}\Big)
=
\frac{|K|(n+1-|K|)}{n}.
\end{equation*}
As to the corresponding eigenvalue $\mu$ of $\mathbf v=\frac{\mathbf z}{\|\mathbf z\|}$, where $\mathbf z:=\sum_{k\in K}\mathbf v_k$, we can use that by the oddity of $d$,
\begin{align*}
\langle\mathcal T\cdot\mathbf z^{d-1},\mathbf z\rangle
&=
\sum_{k\in K}\Big\langle\sum_{j\in K}\mathbf v_j,\mathbf v_k\Big\rangle^d
+\sum_{\substack{1\le k\le n+1\\ k\notin K}}\Big\langle\sum_{j\in K}\mathbf v_j,\mathbf v_k\Big\rangle^d\\
&=
\sum_{k\in K}\Big(1-\frac{|K|-1}{n}\Big)^d+\sum_{\substack{1\le k\le n+1\\ k\notin K}}\Big({-}\frac{|K|}{n}\Big)^d\\
&=
\frac{|K|(n+1-|K|)}{n^d}\Big(\big(n+1-|K|\big)^{d-1}-|K|^{d-1}\Big),
\end{align*}
which yields that the eigenvalue of $\mathbf z$ is
\begin{equation*}
\frac{\langle\mathcal T\cdot\mathbf z^{d-1},\mathbf z\rangle}{\|\mathbf z\|^2}
=
\frac{\big(n+1-|K|\big)^{d-1}-|K|^{d-1}}{n^{d-1}},
\end{equation*}
from which we can deduce \eqref{eq:eigenvalueoddd} by an application of \eqref{eq:normalizedeigenpair}.
\item If $d\ge 4$ is even, by invoking Lemma \ref{lemma:cyclic}, the first family of normalized eigenvectors given by \eqref{eq:eigenvectoroddd} corresponds to the situations \eqref{eq:evendeigencond1} and \eqref{eq:evendeigencond2} from Proposition \ref{propo:h}. By using that $d$ is even, similar to the reasoning in (ii), we can compute that with $\mathbf z:=\sum_{k\in K}\mathbf v_k$, we have
\begin{equation*}
\langle\mathcal T\cdot\mathbf z^{d-1},\mathbf z\rangle=\frac{|K|(n+1-|K|)}{n^d}\Big(\big(n+1-|K|\big)^{d-1}+|K|^{d-1}\Big),
\end{equation*}
which yields \eqref{eq:eigenvalueevend1} after normalization. The second family of normalized eigenvectors given by \eqref{eq:eigenvectorevend2} corresponds to situation \eqref{eq:evendeigencond3}, and the second identity in \eqref{eq:eigenvectorevend2} follows via a similar argument as in (ii), by setting
\begin{equation*}
\mathbf z:=s_{k_1}\sum_{k\in K_1}\mathbf v_k+s_{k_2}\sum_{k\in K_2}\mathbf v_k
\end{equation*}
and by using \eqref{eq:simplexframegramian}, $K_1\cap K_2=\emptyset$ and $s_{k_1}|K_1|+s_{k_2}|K_2|=1$, that
\begin{align*}
\|\mathbf z\|^2
&=
s_{k_1}^2\Big\|\sum_{k\in K_1}\mathbf v_k\Big\|^2
+2s_{k_1}s_{k_2}\Big\langle\sum_{k\in K_1}\mathbf v_k,\sum_{j\in K_2}\mathbf v_j\Big\rangle+s_{k_2}^2\Big\|\sum_{k\in K_2}\mathbf v_k\Big\|^2\\
&=
s_{k_1}^2\frac{|K_1|(n+1-|K_1|)}{n}
-2s_{k_1}s_{k_2}\frac{|K_1||K_2|}{n}
+s_{k_2}^2\frac{|K_2|(n+1-|K_2|)}{n}\\
&=
\frac{1}{n}\Big((n+1)s_{k_1}^2|K_1|+(n+1)s_{k_2}^2|K_2|-1\Big).
\end{align*}
As concerns the corresponding eigenvalue $\mu$ of $\mathbf v=\frac{\mathbf z}{\|\mathbf z\|}$, we combine that $s_{k_1}|K_1|+s_{k_2}|K_2|=1$, $K_1\cap K_2=\emptyset$ and that $d$ is even, which yields the positive number
\begin{align*}
\lefteqn{\langle\mathcal T\cdot\mathbf z^{d-1},\mathbf z\rangle
=
\sum_{k\in K_1}\langle\mathbf z,\mathbf v_k\rangle^d
+
\sum_{k\in K_2}\langle\mathbf z,\mathbf v_k\rangle^d
+
\sum_{\substack{1\le k\le n+1\\ k\notin K_1\cup K_2}}\langle\mathbf z,\mathbf v_k\rangle^d}\\
&=
\sum_{k\in K_1}\Big(s_{k_1}\Big(1-\frac{|K_1|-1}{n}\Big)-s_{k_2}\frac{|K_2|}{n}\Big)^d\\
&\phantom{=}
+\sum_{k\in K_2}\Big(s_{k_2}\Big(1-\frac{|K_2|-1}{n}\Big)-s_{k_1}\frac{|K_1|}{n}\Big)^d\\
&\phantom{=}
+\sum_{\substack{1\le k\le n+1\\ k\notin K_1\cup K_2}}\Big({-}\frac{s_{k_1}|K_1|+s_{k_2}|K_2|}{n}\Big)^d\\
&=\frac{|K_1|((n+1)s_{k_1}-1)^d+|K_2|((n+1)s_{k_2}-1)^d+n+1-|K_1|-|K_2|}{n^d},
\end{align*}
and thus \eqref{eq:eigenvalueevend2} after normalization.
\end{enumerate}
\end{proof}
\subsection{Special case $n=2$}
In case that the local dimension $n$ is equal to $2$, the results from the previous generic analysis concretise as follows. The single barycentric coordinate $s\in[0,1]$ of an eigenvector $\mathbf v=s\mathbf v_1+(1-s)\mathbf v_2$ has to solve the nonlinear equation \eqref{eq:criticaln2}. We will therefore enumerate all real zeros of the expression
\begin{equation}\label{eq:hn2}
h(s):=(1-s)g(s)-sg(1-s)
=g(s)-s\big(g(s)+g(1-s)\big),\quad s\in\mathds R,
\end{equation}
in the following proposition.
\begin{propo}\label{propo:hn2}
Let $d\ge 2$. Then $h$ from \eqref{eq:hn2} is antisymmetric with respect to $s=\frac{1}{2}$, i.e.,
\begin{equation}\label{eq:hn2antisymmetry}
h(s)=-h(1-s),\quad s\in\mathds R.
\end{equation}
Moreover, depending on the parity of $d$, $h$ has the following properties.
\begin{enumerate}[(i)]
\item If $d$ is even, $h$ vanishes at least at $s\in\{0,\frac{1}{3},\frac{1}{2},\frac{2}{3},1\}$. In particular, $h$ is the zero polynomial if $d\in\{2,4\}$, and for each even $d\ge 2$, we can factorize $h(s)$ into
\begin{equation}\label{eq:hn2factorization}
h(s)=-9s(s-1)(2s-1)(3s-1)(3s-2)\sum_{\substack{p,q\ge 0\\ p+q\le d/2-3}}(3s-1)^{2p}(3s-2)^{2q},
\end{equation}
so that if $d\ge 6$ is even, $h$ does not have further real zeros than $\{0,\frac{1}{3},\frac{1}{2},\frac{2}{3},1\}$.
\item If $d$ is odd, $h$ vanishes at $s\in\{0,\frac{1}{2},1\}$, and we have $h(s)<0$ for $0<s<\frac{1}{2}$ and $h(s)>0$ for $\frac{1}{2}<s<1$.
\end{enumerate}
\end{propo}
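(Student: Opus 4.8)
The plan is to establish the three parts in the order they are stated: first the antisymmetry \eqref{eq:hn2antisymmetry}, then part (i), then part (ii). The antisymmetry is immediate: substituting $s\mapsto1-s$ in the first expression for $h$ in \eqref{eq:hn2} and regrouping gives $h(1-s)=s\,g(1-s)-(1-s)g(s)=-h(s)$. In particular $h(\tfrac12)=0$, and since $g(0)=0$ by Lemma \ref{lemma:g}(i) we get $h(0)=g(0)=0$ and hence $h(1)=-h(0)=0$. If $d$ is even, Lemma \ref{lemma:g}(i) also gives $g(\tfrac13)=1$ and $g(\tfrac23)=2$, so $h(\tfrac13)=\tfrac23 g(\tfrac13)-\tfrac13 g(\tfrac23)=0$ and, by antisymmetry, $h(\tfrac23)=0$ as well; thus $h$ vanishes at $\{0,\tfrac13,\tfrac12,\tfrac23,1\}$. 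Since $g$ has degree $d-1$, the polynomial $h$ has degree at most $d$, so for $d\in\{2,4\}$ those five distinct zeros force $h\equiv0$.

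The heart of the matter --- and the only step that needs real work --- is the factorisation \eqref{eq:hn2factorization} for even $d=2m$. Here I would change variables to $a:=3s-1$, $b:=2-3s$, which satisfy $a+b=1$, $s=\tfrac{a+1}{3}$, $1-s=\tfrac{b+1}{3}$ and $g(s)=a^{d-1}+1$, $g(1-s)=b^{d-1}+1$; a short computation then reduces $h$ to the one-variable polynomial
\begin{equation*}
h(s)=\tfrac13\bigl((b+1)a^{d-1}-(a+1)b^{d-1}-(a-b)\bigr)=:\tfrac13\,P_m(a),\qquad b=1-a .
\end{equation*}
Splitting off $a^{2m+1}=a^2\cdot a^{2m-1}$, $b^{2m+1}=b^2\cdot b^{2m-1}$ and using $a+b=1$ (so $a^2+b^2=1-2ab$ and $(a+1)(b+1)=ab+2$), one verifies the linear recursion
\begin{equation*}
P_{m+1}=(a^2+b^2)P_m-a^2b^2P_{m-1}-ab(a+1)(b+1)(a-b),\qquad m\ge2,
\end{equation*}
together with $P_1=P_2=0$. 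On the other hand the partial sums $Q_k:=\sum_{p+q\le k}a^{2p}b^{2q}$ (with $Q_{-1}=Q_{-2}=0$) satisfy $Q_k-(a^2+b^2)Q_{k-1}+a^2b^2Q_{k-2}=1$ for all $k\ge0$, which follows by telescoping the three-term recursion $T_k=(a^2+b^2)T_{k-1}-a^2b^2T_{k-2}$ of the homogeneous pieces $T_k:=Q_k-Q_{k-1}=\sum_{p+q=k}a^{2p}b^{2q}$. Hence $-ab(a+1)(b+1)(a-b)\,Q_{m-3}$ obeys the same recursion and the same initial data as $P_m$, so $P_m=-ab(a+1)(b+1)(a-b)\,Q_{m-3}$; re-substituting $a=3s-1$, $b=2-3s$ and using $ab(a+1)(b+1)(a-b)=27\,s(s-1)(2s-1)(3s-1)(3s-2)$ and $a^{2p}b^{2q}=(3s-1)^{2p}(3s-2)^{2q}$ gives exactly \eqref{eq:hn2factorization} (for $d\in\{2,4\}$ the sum is empty, recovering $h\equiv0$). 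Finally, for even $d\ge6$ the sum in \eqref{eq:hn2factorization} has constant term $1$ and is a sum of squares, hence $\ge1>0$ for every real $s$; so the real zeros of $h$ are precisely those of $s(s-1)(2s-1)(3s-1)(3s-2)$, namely $\{0,\tfrac13,\tfrac12,\tfrac23,1\}$.

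For part (ii), let $d$ be odd. The zeros $0,\tfrac12,1$ are already covered, and by \eqref{eq:hn2antisymmetry} it suffices to show $h(s)<0$ on $(0,\tfrac12)$. By Lemma \ref{lemma:g}(iv), $g$ is strictly convex with minimum at $\tfrac13$, $g(0)=0$, $g<0$ on $(0,\tfrac23)$ and $g>0$ on $(\tfrac23,1]$. On $(0,\tfrac13]$ one has $1-s\in[\tfrac23,1)$, hence $g(s)<0$ and $g(1-s)\ge0$, so both summands of $h(s)=(1-s)g(s)-s\,g(1-s)$ are $\le0$ with the first strictly negative, giving $h(s)<0$. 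On $(\tfrac13,\tfrac12)$ one has $\tfrac13<s<1-s<\tfrac23$, so $g(s),g(1-s)<0$, and since $g$ is strictly increasing (hence $\lvert g\rvert$ strictly decreasing) on $[\tfrac13,\tfrac23]$ we get $0<\lvert g(1-s)\rvert<\lvert g(s)\rvert$; combined with $0<s<1-s$ this yields
\begin{equation*}
h(s)=s\,\lvert g(1-s)\rvert-(1-s)\,\lvert g(s)\rvert<(1-s)\lvert g(1-s)\rvert-(1-s)\lvert g(s)\rvert<0 .
\end{equation*}
The sign on $(\tfrac12,1)$ then follows from $h(s)=-h(1-s)$. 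The only delicate point is the even-$d$ factorisation: once the substitution $a=3s-1$ turns $h$ into $P_m$ and one notices the shared two-term recursion of $P_m$ and of the partial sums $Q_k$, an induction on $m$ finishes it; everything else --- the antisymmetry, the evaluations at the five special points, and the sign analysis for odd $d$ --- is elementary.
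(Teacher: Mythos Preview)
Your proof is correct. The antisymmetry, the evaluation at the five special points, and the $d\in\{2,4\}$ degree argument match the paper. Where you genuinely diverge is in the two substantive parts.

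For the even-$d$ factorisation, the paper proceeds by a direct induction on $k=d/2$: it simply computes the difference $h_{k+1}(s)-h_k(s)$, factors it by hand as
\[
-9s(s-1)(2s-1)(3s-1)(3s-2)\sum_{p=0}^{k-2}(3s-1)^{2p}(3s-2)^{2(k-2-p)},
\]
and adds this ``new shell'' to the inductive hypothesis. Your route via $a=3s-1$, $b=1-a$, the inhomogeneous second-order recursion for $P_m$, and the matching recursion $Q_k-(a^2+b^2)Q_{k-1}+a^2b^2Q_{k-2}=1$ for the partial sums is more structural: it explains \emph{why} the triangular sum $\sum_{p+q\le m-3}a^{2p}b^{2q}$ appears (as the particular solution of the same recursion with the same zero initial data). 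The paper's computation is shorter; yours identifies the mechanism and would generalise more readily.

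For odd $d$, the paper combines two global inequalities: convexity gives $g(s)+g(1-s)\ge 2g(\tfrac12)$, while Lemma~\ref{lemma:g}(v) (monotonicity of $g(s)/s$) gives $g(s)/s\le 2g(\tfrac12)$ on $(0,\tfrac12]$, and together these yield $h(s)\le 0$ with equality only at $\tfrac12$. Your argument is instead a piecewise sign analysis on $(0,\tfrac13]$ and $(\tfrac13,\tfrac12)$ using only the sign pattern of $g$ from Lemma~\ref{lemma:g}(iv) and the monotonicity of $g$ on $[\tfrac13,\tfrac23]$; it avoids Lemma~\ref{lemma:g}(v) entirely and is arguably more elementary.
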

\begin{proof}
$h$ obviously fulfills \eqref{eq:hn2antisymmetry}, which implies that $h(\frac{1}{2})=0$. Moreover, for each $d\ge 2$ we have $\varphi(0)=0$ by Lemma \ref{lemma:g}(i), which yields $h(0)={-}h(1)=0$.
\begin{enumerate}[(i)]
\item Let $d\ge 2$ be even. Then Lemma \ref{lemma:g}(i) implies that $g(\frac{2}{3})=2g(\frac{1}{3})$, so that $h(\frac{1}{3})=-h(\frac{2}{3})=0$. Therefore, $h$ vanishes at least at $s\in\{0,\frac{1}{3},\frac{1}{2},\frac{2}{3},1\}$. If $d\in\{2,4\}$, $h$ therefore has to vanish identically by the fundamental theorem of algebra. We will then prove \eqref{eq:hn2factorization} by induction over even dimensions $d\ge 2$. To this end, let us define
\begin{equation*}
h_k(s):=(1-s)\big((3s-1)^{2k-1}+1\big)-s\big((2-3s)^{2k-1}+1\big),\quad k=1,2,\ldots
\end{equation*}
which is equal to $h(s)$ from \eqref{eq:hn2} for $d=2k$. We already know from the previous reasoning that $h_1$ and $h_2$ vanish identically. Assume now that for some $k\ge 2$, the factorization
\begin{equation*}
h_k(s)=-9s(s-1)(2s-1)(3s-1)(3s-2)\sum_{\substack{p,q\ge 0\\ p+q\le k-3}}(3s-1)^{2p}(3s-2)^{2q}
\end{equation*}
holds true. Then we can compute that
\begin{align*}
h_{k+1}-h_k(s)
&=
(1-s)\big((3s-1)^{2k+1}-(3s-1)^{2k-1}\big)-s\big((2-3s)^{2k+1}-(2-3s)^{2k-1}\big)\\
&=
(1-s)(3s-1)^{2k-1}\big((3s-1)^2-1\big)-s(2-3s)^{2k-1}\big((2-3s)^2-1\big)\\
&=
-3s(s-1)(3s-1)(3s-2)\big((3s-1)^{2(k-1)}-(3s-2)^{2(k-1)}\big)\\
&=
-9s(s-1)(3s-1)(3s-2)(2s-1)\sum_{p=0}^{k-2}(3s-1)^{2p}(3s-2)^{2(k-2-p)},
\end{align*}
which yields the desired factorization of $h_{k+1}$, thereby proving \eqref{eq:hn2factorization} for all even $d\ge 2$.
If $d\ge 6$ is even, the trailing sum in \eqref{eq:hn2factorization} is nonempty and positive, so that $h$ does not have other real zeros than $\{0,\frac{1}{3},\frac{1}{2},\frac{2}{3},1\}$.
\item If $d=2k+1$ is odd, $k\ge 1$, Lemma \ref{lemma:g}(iv) tells us that $g$ is strictly convex. Therefore, 
\begin{equation*}
g(s)+g(1-s)\ge 2g\big({\textstyle\frac{1}{2}s+\frac{1}{2}(1-s)}\big)
=2g\big({\textstyle\frac{1}{2}}\big)=2^{1-2k}-2,\quad s\in\mathds R,
\end{equation*}
with equality if and only if $s=\frac{1}{2}$. On the other hand, by Lemma \ref{lemma:g}(v), we know that $s\mapsto\frac{g(s)}{s}$ is strictly increasing on $(0,\infty)$, so that
\begin{equation*}
\frac{g(s)}{s}
\le\frac{g(\frac{1}{2})}{\frac{1}{2}}
=2g({\textstyle\frac{1}{2}})=2^{1-2k}-2,\quad 0<s\le\frac{1}{2}.
\end{equation*} 
By combining both estimates, we obtain that
\begin{equation*}
g(s)+g(1-s)\ge\frac{g(s)}{s},\quad 0<s\le\frac{1}{2},
\end{equation*}
with equality only if $s=\frac{1}{2}$, thereby proving the claim because of the antisymmetry of $h$ with respect to $\frac{1}{2}$.
\end{enumerate}
\end{proof}

We are now in the position to enumerate all normalized eigenpairs of $\mathcal T$ in the case $n=2$.
\begin{theo}\label{theo:eigenpairsn2}
Let $d\ge 2$ and $n=2$, and let $\mathcal T=\sum_{k=1}^3 \mathbf v_k^{\otimes d}$. 
\begin{enumerate}[(i)]
\item If $d\in\{2,4\}$, each $\mathbf v\in\mathds R^2\setminus\{0\}$ with $\|\mathbf v\|_2=1$ is an eigenvector of $\mathcal T$, with positive eigenvalue
\begin{equation}\label{eq:eveneigenpairsn21}
\mu=\begin{cases}
\frac{3}{2},&d=2,\\
\frac{9}{8},&d=4.
\end{cases}
\end{equation}
\item If $d\ge 6$ is even, there are exactly $12$ normalized eigenpairs $(\mathbf v,\mu)$ of $\mathcal T$, given by 
\begin{equation}\label{eq:eveneigenpairsn22}
\big\{(\pm \mathbf v_k,1+2^{1-d}):1\le k\le 3\big\}\cup\big\{(\mathbf v_k+2\mathbf v_j)/\sqrt{3},3^{d/2}2^{1-d}):1\le k\ne j\le 3\big\},
\end{equation}
and the corresponding eigenvalues are positive.
\item If $d\ge 3$ is odd, there are exactly $6$ normalized eigenpairs $(\mathbf v,\mu)$ of $\mathcal T$, given by
\begin{equation}\label{eq:oddeigenpairsn2}
\big\{(\pm \mathbf v_k,\pm(1-2^{1-d})):1\le k\le 3\big\}.
\end{equation}
\end{enumerate}
\end{theo}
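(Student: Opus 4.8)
The plan is to specialise the generic analysis of Section~\ref{eigenpa} to $n=2$, where the algebraic system reduces to the single condition \eqref{eq:criticaln2}. Recall from the discussion preceding Lemma~\ref{lemma:candidates} that $\mathds R^2$ is the union of the three conical hulls $\{\alpha_1\mathbf v_{\sigma(1)}+\alpha_2\mathbf v_{\sigma(2)}:\alpha_1,\alpha_2\ge 0\}$ taken over the permutations $\sigma$ of $\{1,2,3\}$. Hence, by Lemma~\ref{lemma:cyclic} together with the $(d-1)$-homogeneity of the eigenvector equation (cf.\ \eqref{eq:normalizedeigenpair}), every normalized eigenvector of $\mathcal T$ has the form $\mathbf z/\|\mathbf z\|$ with $\mathbf z=s\mathbf v_{\sigma(1)}+(1-s)\mathbf v_{\sigma(2)}$ for some permutation $\sigma$ of $\{1,2,3\}$ and some $s\in[0,1]$ with $h(s)=0$, $h$ as in \eqref{eq:hn2}; conversely, by Lemma~\ref{lemma:candidates} each such $s$ yields an eigenvector of $\mathcal T$. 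Thus the proof amounts to reading the zeros of $h$ off Proposition~\ref{propo:hn2}, spreading them by the six permutations $\sigma$, removing coincidences, and computing the normalized eigenvalues.

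For part (i), Proposition~\ref{propo:hn2}(i) gives $h\equiv 0$ when $d\in\{2,4\}$, so every $s\in[0,1]$, hence every unit vector, is an eigenvector. For $d=2$ the eigenvalue is $\tfrac32$ because $\mathcal T=(1+\tfrac1n)\mathbf I=\tfrac32\mathbf I$ by \eqref{eq:simplexframeoperator}. For $d=4$ I would insert into \eqref{eq:mu3} and \eqref{eq:normalizedeigenpair}, using $\|\mathbf z\|^2=s^2-s(1-s)+(1-s)^2=3s^2-3s+1$ (as $\langle\mathbf v_1,\mathbf v_2\rangle=-\tfrac12$) and the identity $g(s)+g(1-s)=9(3s^2-3s+1)$; the common factor cancels and leaves $\mu=\tfrac98$.

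For part (ii), $d\ge 6$ even, Proposition~\ref{propo:hn2}(i) pins the zeros of $h$ to $s\in\{0,\tfrac13,\tfrac12,\tfrac23,1\}$, corresponding to $\mathbf z\in\{\mathbf v_1,\mathbf v_2,\tfrac12(\mathbf v_1+\mathbf v_2),\tfrac13(\mathbf v_1+2\mathbf v_2),\tfrac13(2\mathbf v_1+\mathbf v_2)\}$. Normalizing with the help of $\sum_k\mathbf v_k=\mathbf 0$ and \eqref{eq:simplexframegramian} (so $\mathbf v_1+\mathbf v_2=-\mathbf v_3$, $\|\mathbf v_1+\mathbf v_2\|=1$, $\|\mathbf v_1+2\mathbf v_2\|^2=3$), and applying Lemma~\ref{lemma:cyclic} over all six permutations, the endpoints $s\in\{0,1\}$ and the midpoint $s=\tfrac12$ together produce $\{\pm\mathbf v_1,\pm\mathbf v_2,\pm\mathbf v_3\}$, while $s\in\{\tfrac13,\tfrac23\}$ produces $(\mathbf v_k+2\mathbf v_j)/\sqrt3=(\mathbf v_j-\mathbf v_\ell)/\sqrt3$ for the six ordered pairs $k\ne j$, with $\ell$ the remaining index. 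One checks these twelve vectors are pairwise distinct: the latter six are orthogonal to the corresponding $\mathbf v_k$, hence no multiple of any frame vector, and no two of them coincide. The eigenvalues come by direct contraction: as $d-1$ is odd and $\langle\mathbf v_k,\mathbf v_j\rangle=-\tfrac12$ for $j\ne k$, one gets $\mathcal T\cdot\mathbf v_k^{d-1}=(1+2^{1-d})\mathbf v_k$; and since $\langle\mathbf v_j-\mathbf v_\ell,\mathbf v_j\rangle=\tfrac32$, $\langle\mathbf v_j-\mathbf v_\ell,\mathbf v_\ell\rangle=-\tfrac32$, $\langle\mathbf v_j-\mathbf v_\ell,\mathbf v_k\rangle=0$, one gets $\mathcal T\cdot(\mathbf v_j-\mathbf v_\ell)^{d-1}=(\tfrac32)^{d-1}(\mathbf v_j-\mathbf v_\ell)$, whence after normalization via \eqref{eq:normalizedeigenpair} the eigenvalue $3^{d/2}2^{1-d}$; both are positive, giving exactly the twelve pairs claimed.

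For part (iii), $d\ge 3$ odd, Proposition~\ref{propo:hn2}(ii) yields only the zeros $s\in\{0,\tfrac12,1\}$, so spreading them by the permutations produces exactly $\{\pm\mathbf v_1,\pm\mathbf v_2,\pm\mathbf v_3\}$. Now $d-1$ is even, so $(-\tfrac12)^{d-1}=2^{1-d}$ and $\mathcal T\cdot\mathbf v_k^{d-1}=(1-2^{1-d})\mathbf v_k$, while $\mathcal T\cdot(-\mathbf v_k)^{d-1}=\mathcal T\cdot\mathbf v_k^{d-1}$ shows that $-\mathbf v_k$ is an eigenvector with eigenvalue $-(1-2^{1-d})$; this gives exactly the six pairs $(\pm\mathbf v_k,\pm(1-2^{1-d}))$. (One may alternatively recover the eigenvalue formulae of parts (ii) and (iii) by specialising Theorem~\ref{theo:eigenpairs} to $n=2$.) The step I expect to be the main obstacle is the combinatorial bookkeeping in part (ii): verifying that applying the six permutations of $\{1,2,3\}$ to the five barycentric zeros of $h$ collapses to precisely twelve distinct normalized eigenvectors, and that restricting attention to a single conical sector overlooks none of them; the remaining steps are routine computations.
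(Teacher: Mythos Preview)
Your proposal is correct and follows essentially the same route as the paper: reduce via Lemma~\ref{lemma:cyclic} and Lemma~\ref{lemma:candidates} to the zeros of $h$ on $[0,1]$, read those off Proposition~\ref{propo:hn2}, spread by permutations of $\{1,2,3\}$, and normalize. The only differences are cosmetic: for the $d=4$ eigenvalue the paper does a direct contraction $\mathcal T\cdot\mathbf v^3$ whereas you invoke the identity $g(s)+g(1-s)=9(3s^2-3s+1)$, and for the second family in~(ii) the paper computes the eigenvalue through \eqref{eq:mu3}/\eqref{eq:normalizedeigenpair} while you contract directly against $\mathbf v_j-\mathbf v_\ell$; both variants are straightforward and yield the same values.
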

\begin{proof}
Proposition \ref{propo:hn2} yields all eigenvectors from the convex hull $\{s\mathbf v_1+(1-s)\mathbf v_2:0\le s\le 1\}$ of $\mathbf v_1$ and $\mathbf v_2$, which can then be mapped to normalized eigenvectors within the conical hull $\{s\mathbf v_1+t\mathbf v_2:s,t\ge 0\}$ by means of \eqref{eq:normalizedeigenpair}. All eigenvectors in the other three sectors of $\mathds R^2$ are then found by rotation, i.e., by cyclic shifts of $\mathbf v_1,\mathbf v_2,\mathbf v_3$.

\begin{enumerate}[(i)]
\item If $d\in\{2,4\}$, Proposition \ref{propo:hn2}(i) shows that each $\mathbf v\in\mathds R^2\setminus\{0\}$ is an eigenvector. If $d=2$, we can represent each $\mathbf v\in\mathds R^2\setminus\{0\}$ with $\|\mathbf v\|_2=1$ as $\mathbf v=\alpha \mathbf v_1+\beta \mathbf v_2$ with $\alpha^2-\alpha\beta+\beta^2=1$. By using that $\langle \mathbf v_j,\mathbf v_k\rangle$ is equal to $1$ if $j=k$ and equal to ${-}\frac{1}{2}$ if $j\ne k$, and by using $\mathbf v_1+\mathbf v_2+\mathbf v_3=\mathbf 0$, we can compute
\begin{align*}
\mathcal T\cdot\mathbf v
=
\sum_{k=1}^3\langle \mathbf v,\mathbf v_k\rangle \mathbf v_k
&=
\Big(\alpha-\frac{\beta}{2}\Big)\mathbf v_1
+\Big(\beta-\frac{\alpha}{2}\Big)\mathbf v_2
+\Big({-}\frac{\alpha}{2}-\frac{\beta}{2}\Big)\mathbf v_3\\
&=
\frac{3}{2}\alpha \mathbf v_1+\frac{3}{2}\beta \mathbf v_2\\
&=
\frac{3}{2}\mathbf v.
\end{align*}
If $d=4$, a similar computation yields
\begin{align*}
\mathcal T\cdot\mathbf v^3
=
\sum_{k=1}^3\langle \mathbf v,\mathbf v_k\rangle^3 \mathbf v_k
&=
\Big(\alpha-\frac{\beta}{2}\Big)^3 \mathbf v_1
+\Big(\beta-\frac{\alpha}{2}\Big)^3 \mathbf v_2
+\Big({-}\frac{\alpha}{2}-\frac{\beta}{2}\Big)^3 \mathbf v_3\\
&=
\frac{9}{8}(\alpha^3-\alpha^2\beta+\alpha\beta^2)\mathbf v_1
+
\frac{9}{8}(\beta^3-\beta^2\alpha+\beta\alpha^2)\mathbf v_2\\
&=
\frac{9}{8}\mathbf v.
\end{align*}
\item If $d\ge 6$ is even, Proposition \ref{propo:hn2}(i) tells us that all normalized eigenvectors from the conical hull of $\mathbf v_1$ and $\mathbf v_2$ are given by
\begin{equation*}
\mathbf v_1,\quad
\frac{\mathbf v_1+2\mathbf v_2}{\sqrt{3}},\quad
\mathbf v_1+\mathbf v_2=-\mathbf v_3,\quad
\frac{2\mathbf v_1+\mathbf v_2}{\sqrt{3}},\quad
\mathbf v_2.
\end{equation*}
The corresponding eigenvalues can be computed by using \eqref{eq:mu3} and \eqref{eq:normalizedeigenpair}, they read as
\begin{equation*}
1+2^{1-d},\quad
3^{d/2}2^{1-d},\quad
1+2^{1-d},\quad
3^{d/2}2^{1-d},\quad
1+2^{1-d},
\end{equation*}
respectively.
\item If $d\ge 3$ is odd, we proceed in a similar way as in (ii). By Proposition \ref{propo:hn2}(ii), the normalized eigenvectors from the conical hull of $\mathbf v_1$ and $v_2$ are given by
\begin{equation*}
\mathbf v_1,\quad
\mathbf v_1+\mathbf v_2=-\mathbf v_3,\quad
\mathbf v_2,
\end{equation*}
and the corresponding eigenvalues
\begin{equation*}
1-2^{1-d},\quad
{-}1+2^{1-d},\quad
1-2^{1-d},
\end{equation*}
respectively, can be inferred from  \eqref{eq:mu3} and \eqref{eq:normalizedeigenpair}.
\end{enumerate}
\end{proof}

\begin{coro}
If $n=2$ and  $d\ge 3$ is odd, $\mathcal T=\sum_{k=1}^3 \mathbf v_k^{\otimes d}$ is not odeco.
\end{coro}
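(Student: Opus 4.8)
The plan is to argue by contradiction, combining the complete classification of normalized eigenpairs in Theorem~\ref{theo:eigenpairsn2}(iii) with two elementary structural facts about odeco tensors: the generators of an orthogonal decomposition are themselves normalized eigenvectors, and an orthonormal system in $\mathds R^2$ has at most two elements.

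First I would record that $\mathcal T\neq\mathbf 0$; this is immediate, e.g.\ since $\langle\mathcal T,\mathbf v_1^{\otimes d}\rangle=\sum_{k=1}^3\langle\mathbf v_k,\mathbf v_1\rangle^d=1-2^{1-d}\neq 0$, or simply because $\mathcal T$ already possesses a nonzero eigenvalue by Theorem~\ref{theo:eigenpairsn2}(iii). Then I would assume, for contradiction, that $\mathcal T=\sum_{k=1}^r\lambda_k\mathbf w_k^{\otimes d}$ is an orthogonal decomposition, so that $\{\mathbf w_1,\dots,\mathbf w_r\}$ is an orthonormal system in $\mathds R^2$; since such a system has at most two elements and $\mathcal T\neq\mathbf 0$, this forces $r\in\{1,2\}$. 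The key observation, verified by a one-line computation from orthonormality (namely $\mathcal T\cdot\mathbf w_k^{d-1}=\sum_j\lambda_j\langle\mathbf w_j,\mathbf w_k\rangle^{d-1}\mathbf w_j=\lambda_k\mathbf w_k$), is that every generator $\mathbf w_k$ is itself a normalized eigenvector of $\mathcal T$, with eigenvalue $\lambda_k$.

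The rest splits into two cases. If $r=1$, then $\mathcal T=\lambda_1\mathbf w_1^{\otimes d}$, and I would pick any unit vector $\mathbf u\perp\mathbf w_1$ (which exists since $n=2$): then $\mathcal T\cdot\mathbf u^{d-1}=\lambda_1\langle\mathbf w_1,\mathbf u\rangle^{d-1}\mathbf w_1=\mathbf 0$, so $(\mathbf u,0)$ is a normalized eigenpair with eigenvalue $0$, contradicting Theorem~\ref{theo:eigenpairsn2}(iii), where every normalized eigenvalue equals $\pm(1-2^{1-d})\neq 0$. If $r=2$, then $\mathbf w_1\perp\mathbf w_2$, and by Theorem~\ref{theo:eigenpairsn2}(iii) both $\mathbf w_1,\mathbf w_2$ must lie in $\{\pm\mathbf v_1,\pm\mathbf v_2,\pm\mathbf v_3\}$; but the Gramian \eqref{eq:simplexframegramian} with $n=2$ gives $\langle\mathbf v_j,\mathbf v_k\rangle=-\tfrac12$ for $j\neq k$, so any two distinct members of $\{\pm\mathbf v_1,\pm\mathbf v_2,\pm\mathbf v_3\}$ have inner product in $\{-1,-\tfrac12,\tfrac12\}$, never $0$, and hence $\mathbf w_1,\mathbf w_2$ cannot be orthogonal. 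Either way one reaches a contradiction, so $\mathcal T$ is not odeco.

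I do not anticipate a genuine obstacle; the argument is short once Theorem~\ref{theo:eigenpairsn2}(iii) is available. The only mild subtlety is the bookkeeping in the case distinction, in particular noting that an orthogonal decomposition of a tensor on $\mathds R^2$ can have at most two terms, and that the generators are eigenvectors irrespective of the signs and magnitudes of the weights $\lambda_k$ (so, for instance, a vanishing weight would itself already contradict the eigenvalue list).
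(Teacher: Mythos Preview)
Your proof is correct and follows essentially the same approach as the paper: the generators of an orthogonal decomposition are normalized eigenvectors, but by Theorem~\ref{theo:eigenpairsn2}(iii) the only normalized eigenvectors are $\pm\mathbf v_k$, and no two of these are orthogonal. The paper's proof is a single sentence to this effect; your version is more detailed, in particular by treating the rank-one case $r=1$ explicitly via the zero-eigenvalue argument, which the paper leaves implicit.
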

\begin{proof}
If $d\ge 3$ is odd, there is no orthogonal set $\{\mathbf w_1,\mathbf w_2\}\subset\mathds R^2$ of eigenvectors, so that $\mathcal T$ cannot be orthogonally decomposable.
\end{proof}

\subsection{Special case $n=3$}
If $n=3$, the eigenstructure of the simplex tensor $\mathcal T=\sum_{k=1}^4\mathbf v_k^{\otimes d}$ is slightly more complicated as in the previous case $n=2$, but still amenable to a concrete analysis. From Proposition \ref{propo:h} and Theorem \ref{theo:eigenpairs}, we can deduce the following theorem.
\begin{theo}\label{theo:eigenpairsn3}
Let $d\ge 2$ and $n=3$, and let $\mathcal T=\sum_{k=1}^4 \mathbf v_k^{\otimes d}$. 
\begin{enumerate}[(i)]
\item If $d=2$, each $\mathbf v\in\mathds R^3$ with $\|\mathbf v\|=1$ is an eigenvector of $\mathcal T$, with eigenvalue
\begin{equation}\label{eq:eigenvaluematrixcasen3}
\mu=\frac{4}{3}.
\end{equation}
\item If $d\ge 3$ is odd, $\mathbf v\in\mathds R^3$ with $\|\mathbf v\|=1$ is an eigenvector of $\mathcal T$ if and only if there exists a nonempty subset $K\subset\{1,\ldots,4\}$ of cardinality at most $3$, such that
\begin{equation}\label{eq:eigenvectorodddn3}
\mathbf v=\frac{\sum\limits_{k\in K}\mathbf v_k}{\Big\|\sum\limits_{k\in K}\mathbf v_k\Big\|}
=
\frac{\sum\limits_{k\in K}\mathbf v_k}{\sqrt{\frac{|K|(4-|K|)}{3}}},
\end{equation}
and the corresponding eigenvalue is given by
\begin{equation}\label{eq:eigenvalueodddn3}
\mu=\frac{\big(4-|K|\big)^{d-1}-|K|^{d-1}}{3^{d/2}(|K|(4-|K|))^{d/2-1}}.
\end{equation}
We have $\mu=0$ if and only if $|K|=2$, with three linearly independent eigenvectors
\begin{equation}\label{eq:zeroeigenvectorsodddn3}
\frac{\mathbf v_1+\mathbf v_2}{\sqrt{\frac{4}{3}}},\quad 
\frac{\mathbf v_1+\mathbf v_3}{\sqrt{\frac{4}{3}}},\quad 
\frac{\mathbf v_1+\mathbf v_4}{\sqrt{\frac{4}{3}}}.
\end{equation}
\item If $d\ge 4$ is even, $\mathbf v\in\mathds R^3$ with $\|\mathbf v\|=1$ is an eigenvector of $\mathcal T$ if and only if one of the following two conditions is met: Either there exists a nonempty subset $K\subset\{1,\ldots,4\}$ of cardinality at most $3$, such that $\mathbf v$ has the form \eqref{eq:eigenvectorodddn3}, with positive eigenvalue
\begin{equation}\label{eq:eigenvalueevend1n3}
\mu=\frac{\big(4-|K|\big)^{d-1}+|K|^{d-1}}{3^{d/2}(|K|(4-|K|))^{d/2-1}}.
\end{equation}
or there exist nonempty, disjoint subsets $K_1,K_2\subset\{1,\ldots,4\}$, such that $K_1\cup K_2$ has cardinality at most $3$, and with $s^*\in [\frac{1}{3},\frac{1}{2})$ from Lemma \ref{lemma:g}(v) and $0<s_{k_1}\le s^*<s_{k_2}\le 1$ with $|K_1|s_{k_1}+|K_2|s_{k_2}=1$ and $\frac{g(s_{k_1})}{s_{k_1}}=\frac{g(s_{k_2})}{s_{k_2}}$, we have
\begin{equation}\label{eq:eigenvectorevend2n3}
\mathbf v=\frac{s_{k_1}\sum\limits_{k\in K_1}\mathbf v_k+s_{k_2}\sum\limits_{k\in K_2}\mathbf v_k}{\Big\|s_{k_1}\sum\limits_{k\in K_1}\mathbf v_k+s_{k_2}\sum\limits_{k\in K_2}\mathbf v_k\Big\|}
=
\frac{s_{k_1}\sum\limits_{k\in K_1}\mathbf v_k+s_{k_2}\sum\limits_{k\in K_2}\mathbf v_k}{\sqrt{
\frac{4s_{k_1}^2|K_1|+4s_{k_2}^2|K_2|-1}{3}}}
\end{equation}
with positive eigenvalue
\begin{equation}\label{eq:eigenvalueevend2n3}
\mu=\frac{|K_1|(4s_{k_1}-1)^d+|K_2|(4s_{k_2}-1)^d+4-|K_1|-|K_2|}{3^{d/2}(4s_{k_1}^2|K_1|+4s_{k_2}^2|K_2|-1)^{d/2}}.
\end{equation}
\end{enumerate}
\end{theo}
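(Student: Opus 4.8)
The plan is to deduce Theorem~\ref{theo:eigenpairsn3} from the generic Theorem~\ref{theo:eigenpairs} by specializing to the local dimension $n=3$; almost all of the work has already been done there, and the only genuinely new ingredient is the explicit description of the null eigenvectors in part~(ii).

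Part~(i) is precisely Theorem~\ref{theo:eigenpairs}(i) with $n=3$, so that $\mathcal T=(1+\frac13)\mathbf I$ and every unit vector is an eigenvector with eigenvalue $\mu=\frac43$. For part~(iii) I would invoke Theorem~\ref{theo:eigenpairs}(iii): the interval $[\frac1n,\frac{2}{n+1})$ from Lemma~\ref{lemma:g}(v) becomes $[\frac13,\frac12)$, and inserting $n=3$ into \eqref{eq:eigenvectoroddd}, \eqref{eq:eigenvalueevend1}, \eqref{eq:eigenvectorevend2} and \eqref{eq:eigenvalueevend2} reproduces \eqref{eq:eigenvectorodddn3}, \eqref{eq:eigenvalueevend1n3}, \eqref{eq:eigenvectorevend2n3} and \eqref{eq:eigenvalueevend2n3} verbatim; in particular $\|\sum_{k\in K}\mathbf v_k\|^2=|K|(4-|K|)/3$ and $\|s_{k_1}\sum_{k\in K_1}\mathbf v_k+s_{k_2}\sum_{k\in K_2}\mathbf v_k\|^2=(4s_{k_1}^2|K_1|+4s_{k_2}^2|K_2|-1)/3$. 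No new configurations arise, since Proposition~\ref{propo:h} has already enumerated all zeros of $\mathbf h$ for every $n$.

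For part~(ii) with $d\ge3$ odd, Theorem~\ref{theo:eigenpairs}(ii) at $n=3$ directly gives the characterization \eqref{eq:eigenvectorodddn3} of the eigenvectors and the eigenvalue \eqref{eq:eigenvalueodddn3}. It remains to analyze the case $\mu=0$. The denominator $3^{d/2}(|K|(4-|K|))^{d/2-1}$ of \eqref{eq:eigenvalueodddn3} is positive for $1\le|K|\le3$, and since $d-1\ge2$ the map $t\mapsto t^{d-1}$ is injective on $[0,\infty)$; hence the numerator $(4-|K|)^{d-1}-|K|^{d-1}$ vanishes if and only if $4-|K|=|K|$, i.e. $|K|=2$. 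Because $\sum_{k=1}^4\mathbf v_k=\mathbf 0$ forces $\sum_{k\in K}\mathbf v_k=-\sum_{k\notin K}\mathbf v_k$, a two-element set $K$ and its complement induce the same normalized eigenvector up to sign, so the six two-element subsets collapse to the three eigenvectors obtained from $K\in\{\{1,2\},\{1,3\},\{1,4\}\}$, which are exactly those listed in \eqref{eq:zeroeigenvectorsodddn3}. Finally, using $\langle\mathbf v_k,\mathbf v_k\rangle=1$ and $\langle\mathbf v_k,\mathbf v_j\rangle=-\frac13$ for $k\ne j$ one computes that the Gram matrix of $\mathbf v_1+\mathbf v_2,\mathbf v_1+\mathbf v_3,\mathbf v_1+\mathbf v_4$ equals $\frac43\mathbf I$, so these three vectors are pairwise orthogonal and, in particular, linearly independent.

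The argument is essentially bookkeeping, and I do not expect a serious obstacle. The only points requiring some care are the collapse of complementary index sets to a single eigenvector, the Gram-matrix computation establishing (even orthogonality, hence) linear independence of the three null eigenvectors in part~(ii), and the routine verification that each explicit constant appearing in \eqref{eq:eigenvaluematrixcasen3}--\eqref{eq:eigenvalueevend2n3} is indeed the $n=3$ instance of the corresponding formula in Theorem~\ref{theo:eigenpairs}.
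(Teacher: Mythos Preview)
Your proposal is correct and follows essentially the same route as the paper: all three parts are obtained by specializing Theorem~\ref{theo:eigenpairs} to $n=3$, with the only additional work in part~(ii) being the identification of $|K|=2$ as the null-eigenvalue case and the collapse of complementary two-element subsets via $\sum_{k=1}^4\mathbf v_k=\mathbf 0$. Your Gram-matrix verification of linear independence is slightly more explicit than the paper's, which simply asserts the set \eqref{eq:zeroeigenvectorsodddn3} is linearly independent.
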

\begin{proof}
\begin{enumerate}[(i)]
\item This is part (i) of Theorem \ref{theo:eigenpairs}, and we have $\mathcal T=\frac{4}{3}\mathbf I$.
\item \eqref{eq:eigenvectorodddn3} and \eqref{eq:eigenvalueodddn3} directly follow from part (ii) of Theorem \ref{theo:eigenpairs}. We have $\mu=0$ if and only if $4-|K|=|K|$, i.e., $|K|=2$. This yields $\binom{4}{2}=6$ possibilities for $K$,
\begin{equation*}
K\in\big\{\{1,2\},\{1,3\},\{1,4\},\{2,3\},\{2,4\},\{3,4\}\big\},
\end{equation*}
but due to $\sum_{k=1}^4\mathbf v_k=\mathbf 0$, there are three collinear pairs
\begin{equation*}
(\mathbf v_1+\mathbf v_2,\mathbf v_3+\mathbf v_4),\quad
(\mathbf v_1+\mathbf v_3,\mathbf v_2+\mathbf v_4),\quad
(\mathbf v_1+\mathbf v_4,\mathbf v_2+\mathbf v_3)
\end{equation*}
of eigenvectors, from which we can deduce the linearly independent set \eqref{eq:zeroeigenvectorsodddn3} of normalized eigenvectors with zero eigenvalue.
\item This is part (iii) of Theorem \ref{theo:eigenpairs}.
\end{enumerate}
\end{proof}

\begin{ex}
If $n=3$ and $d=4$, Theorem \ref{theo:eigenpairsn3} parametrizes two families of normalized eigenpairs. On the one hand, the $2^4-2=14$ nonempty subsets
\begin{equation*}
\begin{split}
K\in\big\{&\{1\},\{2\},\{3\},\{4\},\{1,2\},\{1,3\},\{1,4\},\{2,3\},\{2,4\},\{3,4\},\\
&\{1,2,3\},\{1,2,4\},\{1,3,4\},\{2,3,4\}\big\}
\end{split}
\end{equation*}
of $\{1,\ldots,4\}$ with at most $3$ elements induce the normalized eigenvectors
\begin{equation*}
\begin{split}
\mathbf v\in\Bigg\{
&\mathbf v_1,\mathbf v_2,\mathbf v_3,\mathbf v_4,
\frac{\mathbf v_1+\mathbf v_2}{\sqrt{\frac{4}{3}}},
\frac{\mathbf v_1+\mathbf v_3}{\sqrt{\frac{4}{3}}},
\frac{\mathbf v_1+\mathbf v_4}{\sqrt{\frac{4}{3}}},
\frac{\mathbf v_2+\mathbf v_3}{\sqrt{\frac{4}{3}}},
\frac{\mathbf v_2+\mathbf v_4}{\sqrt{\frac{4}{3}}},
\frac{\mathbf v_3+\mathbf v_4}{\sqrt{\frac{4}{3}}},\\
&
\mathbf v_1+\mathbf v_2+\mathbf v_3,
\mathbf v_1+\mathbf v_2+\mathbf v_4,
\mathbf v_1+\mathbf v_3+\mathbf v_4,
\mathbf v_2+\mathbf v_3+\mathbf v_4
\Bigg\}
\end{split}
\end{equation*}
from \eqref{eq:eigenvectorodddn3}, which by means of $\sum_{k=1}^4\mathbf v_k=\mathbf 0$ could be grouped into 7 pairs of collinear eigenvectors. The respective eigenvalues read as
\begin{equation*}
\mu\in\Big\{
\frac{28}{27},
\frac{28}{27},
\frac{28}{27},
\frac{28}{27},
\frac{4}{9},
\frac{4}{9},
\frac{4}{9},
\frac{4}{9},
\frac{4}{9},
\frac{4}{9},
\frac{28}{27},
\frac{28}{27},
\frac{28}{27},
\frac{28}{27}
\Big\}.
\end{equation*}
On the other hand, a second family of normalized eigenpairs is given by pairs of nonempty, disjoint subsets $K_1,K_2\subset\{1,\ldots,4\}$ with $|K_1|+|K_2|\le 3$ and the constraints listed in Theorem \ref{theo:eigenpairsn3}(iii). There are $3$ possibilities for the cardinalities $|K_j|$ of $K_j$, namely
\begin{equation*}
\big(|K_1|,|K_2|\big)
\in
\big\{(1,1),(1,2),(2,1)\big\}.
\end{equation*}
By retracing Example \ref{ex:deven}, we see that only the case $|K_1|=2$ and $|K_2|=1$ and the corresponding zeros
\begin{equation*}
({\textstyle\frac{1}{4},\frac{1}{4}}),\quad
({\textstyle\frac{1}{4},\frac{1}{2}}),\quad
({\textstyle\frac{1}{2},\frac{1}{4}})
\end{equation*}
of $\mathbf h$ from \eqref{eq:h} induce further normalized eigenvectors, namely
\begin{equation*}
\begin{split}
\mathbf v\in\Bigg\{
&\frac{2\mathbf v_1+\mathbf v_2+\mathbf v_3}{\sqrt{\frac{8}{3}}},
\frac{2\mathbf v_1+\mathbf v_2+\mathbf v_4}{\sqrt{\frac{8}{3}}},
\frac{2\mathbf v_1+\mathbf v_3+\mathbf v_4}{\sqrt{\frac{8}{3}}},
\frac{2\mathbf v_2+\mathbf v_1+\mathbf v_3}{\sqrt{\frac{8}{3}}},\\
&\frac{2\mathbf v_2+\mathbf v_1+\mathbf v_4}{\sqrt{\frac{8}{3}}},
\frac{2\mathbf v_2+\mathbf v_3+\mathbf v_4}{\sqrt{\frac{8}{3}}},
\frac{2\mathbf v_3+\mathbf v_1+\mathbf v_2}{\sqrt{\frac{8}{3}}},
\frac{2\mathbf v_3+\mathbf v_1+\mathbf v_4}{\sqrt{\frac{8}{3}}},\\
&\frac{2\mathbf v_3+\mathbf v_2+\mathbf v_4}{\sqrt{\frac{8}{3}}},
\frac{2\mathbf v_4+\mathbf v_1+\mathbf v_2}{\sqrt{\frac{8}{3}}},
\frac{2\mathbf v_4+\mathbf v_1+\mathbf v_3}{\sqrt{\frac{8}{3}}},
\frac{2\mathbf v_4+\mathbf v_2+\mathbf v_3}{\sqrt{\frac{8}{3}}}
\Bigg\},
\end{split}
\end{equation*}
which can be grouped into 6 pairs of collinear eigenvectors, with eigenvalue
\begin{equation*}
\mu=\frac{8}{9}
\end{equation*}
each.
\end{ex}

\newpage

\section{Robustness analysis}\label{robustanalysis}
In this section we study the robustness of all normalized eigenvectors of a regular simplex tensor $\mathcal T=\sum_{k=1}^{n+1}\mathbf v_k^{\otimes d}$ with respect to the forward mapping of the tensor power teration 
\begin{equation}\label{eq:phi}
\varphi(\mathbf x):=\frac{\mathcal T \cdot \mathbf x^{d-1}}{\| \mathcal T \cdot \mathbf x^{d-1} \|},\quad \mathbf x\in\mathds R^n,\quad \mathcal T \cdot \mathbf x^{d-1} \ne 0,
\end{equation}
at least if $n=2$ and $n=3$.

In order to assess the attractivity of a given fixed point $\mathbf x\in\mathds R^n$ of the differentiable mapping $\varphi$, we will determine the spectral radius of the Jacobian of $\varphi$ at $\mathbf x$.
We will check whether the Jacobian of $\varphi$ at $\mathbf x$ has spectral radius strictly less than one, which indicates local contractivity of $\varphi$, or strictly greater than one, which indicates local expansivity at least in one direction.

It is well-known, see also \cite{MRU21}, that the Jacobian of $\varphi$ at an arbitrary vector $\mathbf x\in\mathds R^n$ is given by the symmetric matrix
\begin{equation}\label{eq:phideriv}
\varphi'(\mathbf x)
=
\frac{d-1}{\big\| \mathcal T \cdot \mathbf x^{d-1} \big\|}
\left(
I-\frac{(\mathcal T \cdot \mathbf x^{d-1}) (\mathcal T \cdot \mathbf x^{d-1})^\top}{\big\| \mathcal T \cdot \mathbf x^{d-1} \big\|^2}
\right) \mathcal T \cdot \mathbf x^{d-2},
\end{equation}
where
\begin{equation}\label{eq:Txdminus2}
\mathcal T\cdot\mathbf x^{d-2}:=
\Big(\sum_{i_1,\ldots,i_{d-2}=1}^n T_{i_1,\ldots,i_{d-2},j,k}x_{i_1}\cdots x_{i_{d-2}}x_jx_k\Big)_{1\le j,k\le n}
\end{equation}
is the $(d-2)$-fold partial contraction of $\mathcal T$ with $\mathbf x^{d-2}$. 
In the matrix case $d=2$ and $\mathcal T=\mathbf A$, \eqref{eq:phideriv} is to be understood as
\begin{equation}\label{eq:phideriv2}
\varphi'(\mathbf x)
=
\frac{1}{\|\mathbf A\mathbf x\|}
\left(
I-\frac{\mathbf A\mathbf x(\mathbf A\mathbf x)^\top}{\|\mathbf A\mathbf x\|^2}
\right)\mathbf A.
\end{equation}
If $\mathbf x\in\mathds R^n$ is a normalized eigenvector of $\mathcal T$ with eigenvalue $\mu$, we obtain that
\begin{equation}\label{eq:phideriveigenvector}
\varphi'(\mathbf x)
=
\frac{d-1}{|\mu|}(I-\mathbf x\mathbf x^\top) \mathcal T \cdot \mathbf x^{d-2} 
=
\frac{d-1}{|\mu|}\big( \mathcal T \cdot \mathbf x^{d-2} -\mu \mathbf x\mathbf x^\top\big),
\end{equation}
which for $d=2$ and $\mathcal T=\mathbf A$ reduces to
\begin{equation}\label{eq:phideriveigenvector2}
\varphi'(\mathbf x)
=
\frac{1}{|\mu|}(\mathbf I-\mathbf x\mathbf x^\top)\mathbf A.
\end{equation}

Before we start with the eigenvalue analysis of $\varphi'$ at a normalized tensor eigenvector $\mathbf x\in\mathds R^n$, we note that by the very structure of \eqref{eq:phideriveigenvector}, we have
\begin{equation}\label{eq:phideriveigenvectorkernel}
\varphi'(\mathbf x)\mathbf x= \mathbf{0}.
\end{equation}
Therefore, the spectral radius $\rho(\varphi'(\mathbf x))$ of the symmetric matrix $\varphi'(\mathbf x)$ is determined by the other eigenvectors of $\varphi'(\mathbf x)$ which are contained in the orthogonal complement $\spn\{\mathbf x\}^\perp$.

\subsection{Case $n=2$}
In case that the local dimension $n$ is equal to two, we will now explicitly compute the spectral radius of the Jacobian of $\varphi$ at each normalized eigenvector. 
\begin{theo}
Let $d\ge 2$, and let $\mathcal T=\sum_{k=1}^3 \mathbf v_k^{\otimes d}$.
\begin{enumerate}[(i)]
\item If $d\in\{2,4\}$, the spectral radius of $\varphi'$ at all normalized $\mathbf x\in\mathds R^2$ is equal to 1, i.e., there are no robust eigenvectors.
\item If $d\ge 6$ is even, the spectral radius of $\varphi'$ at the normalized eigenvectors $\pm \mathbf v_k$ is equal to $\frac{3(d-1)}{2^{d-1}+1}$, i.e., these normalized eigenvectors are robust. The spectral radius of $\varphi'$ at the normalized eigenvectors $(\mathbf v_k+2\mathbf v_j)/\sqrt{3}$, $k\ne j$ is equal to $\frac{d-1}{3}$, i.e., those normalized eigenvectors are non-robust. 
\item If $d\ge 3$ is odd, the spectral radius of $\varphi'$ at all normalized eigenvectors $\pm \mathbf v_k$ is equal to $\frac{3(d-1)}{2^{d-1}-1}$, i.e., all normalized eigenvectors are non-robust for $d=3$, and robust for $d\ge 5$.
\end{enumerate}
\end{theo}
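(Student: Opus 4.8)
The plan is to combine the closed form \eqref{eq:phideriveigenvector} for the Jacobian at a normalized eigenvector with the kernel identity \eqref{eq:phideriveigenvectorkernel}. Since $n=2$, the symmetric matrix $\varphi'(\mathbf x)$ has $\mathbf x$ in its kernel; picking a unit vector $\mathbf w\perp\mathbf x$, symmetry forces $\varphi'(\mathbf x)=\lambda\,\mathbf w\mathbf w^{\top}$ with $\lambda=\langle\mathbf w,\varphi'(\mathbf x)\mathbf w\rangle$, so $\rho(\varphi'(\mathbf x))=|\lambda|$. Using the rank-one expansion $\mathcal T\cdot\mathbf x^{d-2}=\sum_{k=1}^{3}\langle\mathbf v_k,\mathbf x\rangle^{d-2}\mathbf v_k\mathbf v_k^{\top}$ and cancelling the $\mathbf x\mathbf x^{\top}$ term of \eqref{eq:phideriveigenvector} against $\mathbf w$, this reduces to
\begin{equation*}
\rho\big(\varphi'(\mathbf x)\big)=\frac{d-1}{|\mu|}\,\Big|\sum_{k=1}^{3}\langle\mathbf v_k,\mathbf x\rangle^{d-2}\langle\mathbf v_k,\mathbf w\rangle^{2}\Big|,\qquad \langle\mathbf v_k,\mathbf w\rangle^{2}=1-\langle\mathbf v_k,\mathbf x\rangle^{2},\qquad |\mu|=\Big|\sum_{k=1}^{3}\langle\mathbf v_k,\mathbf x\rangle^{d}\Big|.
\end{equation*}
Because $\mathcal T$ is invariant under every permutation of $\{\mathbf v_1,\mathbf v_2,\mathbf v_3\}$ (cf.\ Lemma \ref{lemma:cyclic}) and because replacing $\mathbf x$ by $-\mathbf x$ affects neither $|\mu|$ nor the modulus above, it suffices to evaluate this expression on one representative of each eigenvector orbit listed in Theorem \ref{theo:eigenpairsn2}; all inner products $\langle\mathbf v_k,\mathbf x\rangle$ are read off from the Gramian \eqref{eq:simplexframegramian} (off-diagonal entries $-\tfrac12$) and from $\mathbf v_1+\mathbf v_2+\mathbf v_3=\mathbf 0$.

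For $d\ge 3$ there are at most two orbit types. With $\mathbf x=\mathbf v_1$ one gets $(\langle\mathbf v_k,\mathbf x\rangle)_{k=1,2,3}=(1,-\tfrac12,-\tfrac12)$, hence $(\langle\mathbf v_k,\mathbf w\rangle^{2})_{k}=(0,\tfrac34,\tfrac34)$, so the inner sum equals $\tfrac32(-\tfrac12)^{d-2}=\tfrac{3(-1)^{d}}{2^{d-1}}$ and $|\mu|=\big|1+(-1)^{d}2^{1-d}\big|$; this yields $\rho=\tfrac{3(d-1)}{2^{d-1}+1}$ when $d$ is even and $\rho=\tfrac{3(d-1)}{2^{d-1}-1}$ when $d$ is odd. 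With $\mathbf x=(\mathbf v_1+2\mathbf v_2)/\sqrt{3}$, which occurs only for even $d\ge 6$, one gets $(\langle\mathbf v_k,\mathbf x\rangle)_{k}=(0,\tfrac{\sqrt{3}}{2},-\tfrac{\sqrt{3}}{2})$, hence $(\langle\mathbf v_k,\mathbf w\rangle^{2})_{k}=(1,\tfrac14,\tfrac14)$; as $d>2$ the $0^{d-2}$ contribution vanishes, the inner sum equals $\tfrac12\big(\tfrac34\big)^{(d-2)/2}$, and $\mu=2\big(\tfrac34\big)^{d/2}=3^{d/2}2^{1-d}$, so $\rho=\tfrac{d-1}{3}$. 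This produces all spectral radii claimed in (ii) and (iii).

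The orders $d\in\{2,4\}$ in (i) need a separate argument, since then every unit vector is an eigenvector. By Theorem \ref{theo:eigenpairsn2}(i) we have $\mathcal T\cdot\mathbf x^{d-1}=c\,\|\mathbf x\|^{d-2}\mathbf x$ for all $\mathbf x\ne\mathbf 0$, with $c=\tfrac32$ if $d=2$ and $c=\tfrac98$ if $d=4$; hence $\varphi$ is the radial retraction $\mathbf x\mapsto\mathbf x/\|\mathbf x\|$ onto the unit sphere, so $\varphi'(\mathbf x)=\mathbf I-\mathbf x\mathbf x^{\top}$ at every unit $\mathbf x$, of spectral radius $1$. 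Since every unit vector is then a fixed point of $\varphi$, no open neighbourhood of a normalized eigenvector can converge to it alone under the iteration, so none is robust.

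Finally, to deduce the robustness statements I would use the criterion recalled in Section \ref{robustanalysis}: a normalized eigenvector with $\rho(\varphi'(\mathbf x))<1$ is robust, one with $\rho(\varphi'(\mathbf x))>1$ is not. It then remains to compare the closed forms with $1$: both $\tfrac{3(d-1)}{2^{d-1}+1}<1$ and $\tfrac{3(d-1)}{2^{d-1}-1}<1$ follow from the elementary bound $2^{d-1}>3d$, valid for all $d\ge 5$ by a one-line induction, so the $\pm\mathbf v_k$ are robust for even $d\ge 6$ and for odd $d\ge 5$, whereas $\tfrac{3\cdot2}{2^{2}-1}=2>1$ makes them non-robust for $d=3$; and $\tfrac{d-1}{3}\ge\tfrac53>1$ for $d\ge 6$ makes the $(\mathbf v_k+2\mathbf v_j)/\sqrt{3}$ non-robust. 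No single step is a genuine obstacle; the points demanding care are the bookkeeping of the inner-product data of each eigenvector orbit and the separate treatment of $d\in\{2,4\}$, which I handle through the explicit form of $\varphi$ rather than by enumeration.
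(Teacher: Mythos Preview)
Your argument is correct and follows the same strategy as the paper: in $\mathds R^2$ the symmetric Jacobian $\varphi'(\mathbf x)$ has $\mathbf x$ in its kernel, so the spectral radius is the single remaining eigenvalue in the direction $\mathbf w\perp\mathbf x$, which one then computes for each eigenvector family of Theorem~\ref{theo:eigenpairsn2}.

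The execution differs in two places worth noting. First, the paper computes the full $2\times 2$ matrix $\mathcal T\cdot\mathbf x^{d-2}$ via the frame identity $\sum_k\mathbf v_k\mathbf v_k^{\top}=\tfrac32\mathbf I$ and then reads off its eigendecomposition, whereas you go straight to the quadratic form $\langle\mathbf w,(\mathcal T\cdot\mathbf x^{d-2})\mathbf w\rangle$ using the Pythagorean identity $\langle\mathbf v_k,\mathbf w\rangle^{2}=1-\langle\mathbf v_k,\mathbf x\rangle^{2}$; this avoids ever naming $\mathbf w$ and makes the two orbit computations uniform. Second, for $d\in\{2,4\}$ the paper verifies $(\mathcal T\cdot\mathbf x^{d-2})\mathbf w=\tfrac{\mu}{d-1}\mathbf w$ by a coordinate calculation (explicitly for $d=4$), while you observe that Theorem~\ref{theo:eigenpairsn2}(i) forces $\varphi$ to be the radial retraction $\mathbf x\mapsto\mathbf x/\|\mathbf x\|$, so $\varphi'(\mathbf x)=\mathbf I-\mathbf x\mathbf x^{\top}$ immediately; this is cleaner and also makes the non-robustness conclusion transparent (every nearby unit vector is itself a fixed point). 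Both routes are short; yours is a bit more conceptual, the paper's a bit more explicit.
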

\begin{proof}
\begin{enumerate}[(i)]
\item If $d=2$, we have
\begin{equation*}
\mathcal T=\sum_{k=1}^3 \mathbf v_k^{\otimes 2}=\frac{3}{2}\mathbf I.
\end{equation*}
Therefore, each $\mathbf x\in\mathds R^2\setminus\{\mathbf 0\}$ with $\|\mathbf x\|=1$ is an eigenvector of $\mathcal T$ with eigenvalue $\mu=\frac{3}{2}$. An application of \eqref{eq:phideriv2} yields
\begin{equation*}
\varphi'(\mathbf x)=\frac{1}{3/2}(\mathbf I-\mathbf x\mathbf x^\top)\frac{3}{2}\mathbf I=\mathbf I-\mathbf x\mathbf x^\top,
\end{equation*}
which has eigenvalues $0$ and $1$, and thus spectral radius 1.

If $d=4$, Theorem \ref{theo:eigenpairsn2}(i) tells us that each $\mathbf x=(x_1,x_2)\in\mathds R^2\setminus\{0\}$ with $x_1^2+x_2^2=1$ is an eigenvector of $\mathcal T$, and hence of $\mathcal T\cdot\mathbf x^{d-2}\in\mathds R^{2\times 2}$, with eigenvalue $\mu=\frac{9}{8}$. By the structure of $\varphi'(\mathbf x)$, it follows that $\varphi'(\mathbf x)\mathbf x=\mathbf 0$. Setting $\mathbf w:=(x_2,-x_1)$, we compute that
\begin{align*}
(\mathcal T\cdot\mathbf x^2)\mathbf w
&=
\sum_{k=1}^3 \langle \mathbf x,\mathbf v_k\rangle^2\langle \mathbf w,\mathbf v_k\rangle \mathbf v_k\\
&=
x_1^2x_2
\begin{pmatrix}
1\\
0
\end{pmatrix}
+\Big({-}\frac{x_1}{2}+\frac{x_2\sqrt{3}}{2}\Big)^2\Big({-}\frac{x_2}{2}-\frac{x_1\sqrt{3}}{2}\Big)
\begin{pmatrix}
-\frac{1}{2}\\
\frac{\sqrt{3}}{2}
\end{pmatrix}\\
&\phantom{=}
+
\Big({-}\frac{x_1}{2}-\frac{x_2\sqrt{3}}{2}\Big)^2\Big({-}\frac{x_2}{2}+\frac{x_1\sqrt{3}}{2}\Big)
\begin{pmatrix}
-\frac{1}{2}\\
-\frac{\sqrt{3}}{2}
\end{pmatrix}\\
&=
\begin{pmatrix}
\frac{3}{8}x_1^2x_2+\frac{3}{8}x_2^3\\
{-}\frac{3}{8}x_1^3-\frac{3}{8}x_1x_2^3
\end{pmatrix}\\
&=
\frac{3}{8}\mathbf w,
\end{align*}
i.e., $\mathbf w$ is an eigenvector of $\mathcal T\cdot\mathbf x^2$ with eigenvalue $\frac{3}{8}$. By the orthogonality between $\mathbf x$ and $\mathbf w$, we obtain $\varphi'(\mathbf x)\mathbf w=\frac{8}{3}(\mathbf I-\mathbf x\mathbf x^\top)(\mathcal T\cdot\mathbf x^2)\mathbf w=\mathbf w$, so that $\varphi'(\mathbf x)$ has eigenvalues $0$ and $1$, and thus spectral radius 1.
\item If $d\ge 6$ is even, Theorem \ref{theo:eigenpairsn2}(i) tells us that there are two types of normalized eigenvectors, namely $\pm \mathbf v_k$ and $(\mathbf v_k+2\mathbf v_j)/\sqrt{3}$ for $1\le k\ne j\le 3$, with eigenvalues $\mu_1=1+2^{1-d}$ and $\mu_2=3^{d/2}2^{1-d}$, respectively. By using that $\sum_{k=1}^3 \mathbf v_k\mathbf v_k^\top=\frac{3}{2}\mathbf I$, we get
\begin{align*}
\mathcal T\cdot(\pm \mathbf v_k)^{d-2}
&=
\sum_{j=1}^3\langle \mathbf v_k,\mathbf v_j\rangle^{d-2}\mathbf v_j\mathbf v_j^\top\\
&=
\mathbf v_k\mathbf v_k^\top+\frac{1}{2^{d-2}}\sum_{j\ne k}\mathbf v_j\mathbf v_j^\top\\
&=
\Big(1-\frac{1}{2^{d-2}}\Big)\mathbf v_k\mathbf v_k^\top+\frac{3}{2^{d-1}}\mathbf I.
\end{align*}
Therefore, $\mathcal T\cdot(\pm \mathbf v_k)^{d-2}$ has the normalized eigenvectors $\mathbf v_k$ and $\mathbf w\in\spn\{\mathbf v_k\}^\perp$ with eigenvalues $\mu_1$ and $\frac{3}{2^{d-1}}$, respectively. Hence, $\mathcal T\cdot(\pm \mathbf v_k)^{d-2}$ and $\mathbf I-\mathbf v_k\mathbf v_k^\top$ having the same eigenvectors, $\varphi'(\pm \mathbf v_k)$ has the eigenvalues 
\begin{equation*}
0,\quad \frac{d-1}{|\mu_1|}\frac{3}{2^{d-1}}=\frac{3(d-1)}{2^{d-1}+1}.
\end{equation*}
The spectral radius of $\varphi'(\pm \mathbf v_k)$ is therefore given by
\begin{equation*}
\rho\big(\varphi'(\pm \mathbf v_k)\big)=\frac{3(d-1)}{2^{d-1}+1},
\end{equation*}
which is strictly less than $1$ if and only if $d\ge 6$.

As concerns the second eigenvector family, we proceed in an analogous way. If $1\le k\ne j\le 3$ are given, and if $r\in\{1,2,3\}$ is the index with $\{1,2,3\}\setminus\{k,j\}=\{r\}$, we have
\begin{equation*}
\langle \mathbf v_k+2\mathbf v_j,\mathbf v_l\rangle=\begin{cases}
0,&l=k,\\
\frac{3}{2},&l=j,\\
{-}\frac{3}{2},&l=r,
\end{cases}
\end{equation*}
which implies that
\begin{align*}
\mathcal T\cdot((\mathbf v_k+2\mathbf v_j)/\sqrt{3})^{d-2}\big\rangle
&=
3^{1-d/2}\sum_{l=1}^3\langle \mathbf v_k+2\mathbf v_j,\mathbf v_l\rangle^{d-2}\mathbf v_l\mathbf v_l^\top\\
&=
\frac{3^{d/2-1}}{2^{d-2}}(\mathbf v_j\mathbf v_j^\top+\mathbf v_r\mathbf v_r^\top)\\
&=
\frac{3^{d/2}}{2^{d-1}}\mathbf I-\frac{3^{d/2-1}}{2^{d-2}}\mathbf v_k\mathbf v_k^\top.
\end{align*}
Therefore, $\mathcal T\cdot((\mathbf v_k+2\mathbf v_j)/\sqrt{3})^{d-2}$ has the normalized eigenvectors $(\mathbf v_k+2\mathbf v_j)/\sqrt{3}\perp \mathbf v_k$ and $\mathbf v_k$ with eigenvalues $\mu_2$ and $\frac{3^{d/2-1}}{2^{d-1}}$, respectively. Hence, $\mathcal T\cdot((\mathbf v_k+2\mathbf v_j)/\sqrt{3})^{d-2}$ and $\mathbf I-(\mathbf v_k+2\mathbf v_j)(\mathbf v_k+2\mathbf v_j)^\top/3$ having the same eigenvectors, $\varphi'((\mathbf v_k+2\mathbf v_j)/\sqrt{3})$ has the eigenvalues
\begin{equation*}
0,\quad \frac{d-1}{|\mu_2|}\frac{3^{d/2-1}}{2^{d-1}}=\frac{d-1}{3}.
\end{equation*}
The spectral radius of $\varphi'((\mathbf v_k+2\mathbf v_j)/\sqrt{3})$ is therefore equal to $\frac{d-1}{3}>1$, so that the second family of normalized eigenvectors is not robust.
\item If $d\ge 3$ is odd, Theorem \ref{theo:eigenpairsn2}(ii) tells us that the only normalized eigenvectors of $\mathcal T$ are given by $\pm \mathbf v_k$, with eigenvalues $\mu_{\pm}=\pm(1-2^{1-d})$. By using that $\sum_{k=1}^3 \mathbf v_k\mathbf v_k^\top=\frac{3}{2}\mathbf I$, similar to the reasoning in (ii), we get
\begin{align*}
\mathcal T\cdot(\pm \mathbf v_k)^{d-2}
&=
\pm\sum_{j=1}^3\langle \mathbf v_k,\mathbf v_j\rangle^{d-2}\mathbf v_j\mathbf v_j^\top\\
&=
\pm\Big(\mathbf v_k\mathbf v_k^\top-\frac{1}{2^{d-2}}\sum_{j\ne k}\mathbf v_j\mathbf v_j^\top\Big)\\
&=
\pm\left(\Big(1+\frac{1}{2^{d-2}}\Big)\mathbf v_k\mathbf v_k^\top-\frac{3}{2^{d-1}}\mathbf I\right).
\end{align*}
Therefore, $\mathcal T\cdot (\pm \mathbf v_k)^{d-2}\rangle$ has the normalized eigenvectors $\mathbf v_k$ and $\mathbf w\in\spn\{\mathbf v_k\}^\perp$ with eigenvalues $\mu_{\pm}$ and $\mp\frac{3}{2^{d-1}}$, respectively. Hence, $\mathcal T\cdot(\pm \mathbf v_k)^{d-2}\rangle$ and $\mathbf I-\mathbf v_k\mathbf v_k^\top$ having the same eigenvectors, $\varphi'(\pm \mathbf v_k)$ has the eigenvalues
\begin{equation*}
0,\quad \mp\frac{d-1}{|\mu_\pm|}\frac{3}{2^{d-1}}=\mp\frac{3(d-1)}{2^{d-1}-1}.
\end{equation*}
The spectral radius of $\varphi'(\pm \mathbf v_k)$ is therefore given by
\begin{equation*}
\rho\big(\varphi'(\pm \mathbf v_k)\big)=\frac{3(d-1)}{2^{d-1}-1},
\end{equation*}
which is strictly less than $1$ if and only if $d\ge 5$.
\end{enumerate}
\end{proof}

\begin{ex}
In order to visualize the typical performance of the tensor power method in the two-dimensional case, let us proceed as follows. Each point on the unit sphere in $\mathbf R^2$ is uniquely determined by its angle (i.e., its argument), and we assign a color to each possible angle. For each starting point $\mathbf v^{(0)}$ on the unit sphere, we iterate the tensor power method \eqref{eq:tpm} until convergence, and then paint $\mathbf v^{(0)}$ with the color of its limit point. The resulting domains of attraction in the special case $d=7$ are visualized in Figure \ref{fig:attractionn2}, together with the spanning vectors $\mathbf{v}_{1}$, $\mathbf{v}_{2}$ and $\mathbf v_{3}$ of the regular simplex tensor $\mathcal T=\sum_{k=1}^3\mathbf v_k^{\otimes d}$.
\begin{figure}[h]
\centering
\includegraphics[width=.4\linewidth]{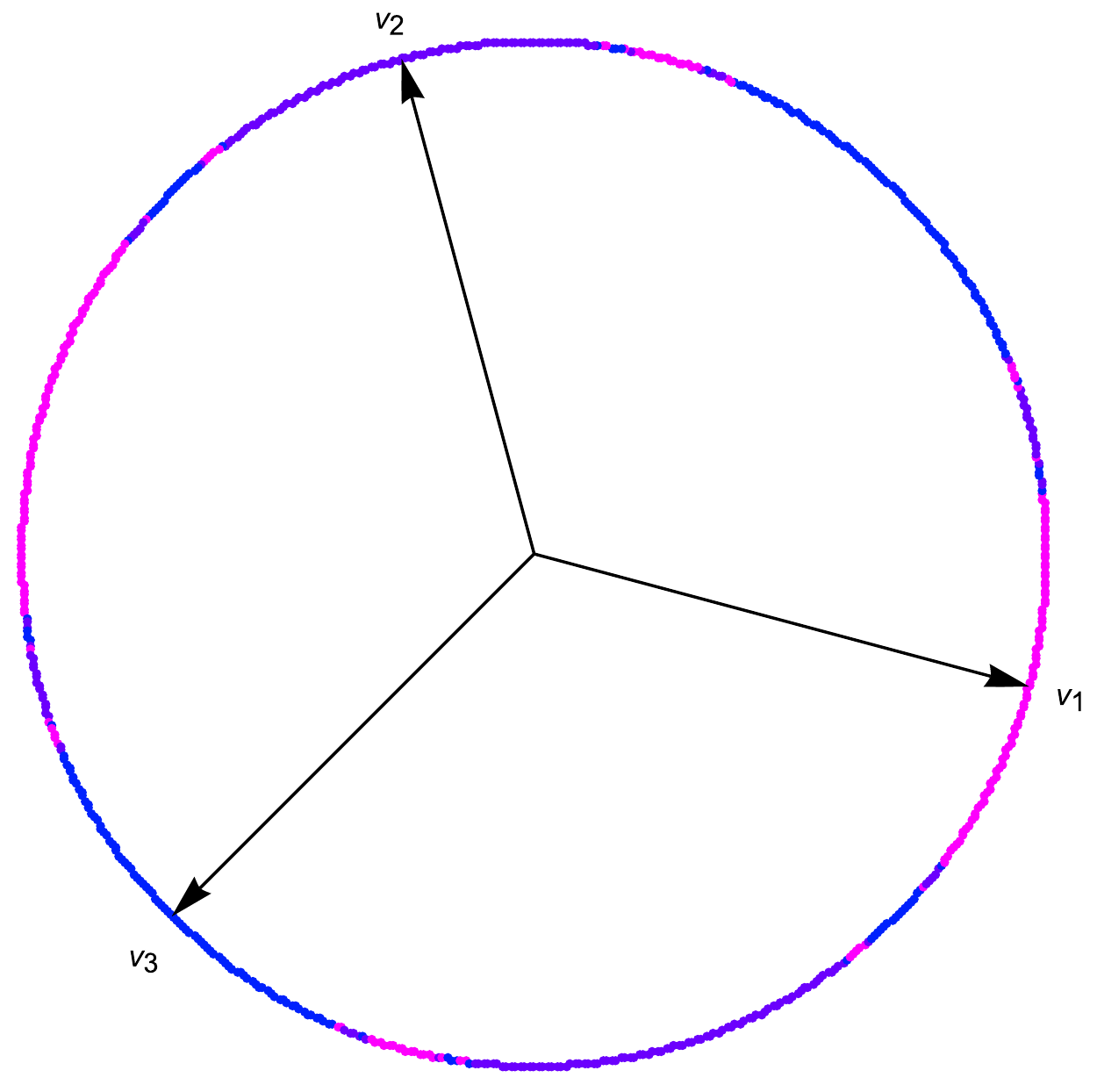}
\caption{Regions of attraction of the power tensor method if $n=2$ and $d=7$.}
\label{fig:attractionn2}
\end{figure}
It can be clearly seen that there are six regions of attraction which moreover contain the 
vectors $\pm\mathbf{v}_1, \pm\mathbf{v}_2, \pm\mathbf{v}_3$ as their respective centres. This observation is backed by Theorem \ref{theo:eigenpairsn2}(iii), which states that in this case $d=7$, all normalized eigenvectors of $\mathcal T$ are robust.
\end{ex}

\subsection{Case $n=3$}
In the special case $n=3$, we conduct two numerical experiments to assess the robustness of a normalized eigenvector.

The first experiment is set up in a similar way as in the two-dimensional case. We assign a unique color to each point on the unit sphere in $\mathds R^3$, depending on the respective spherical angles. In Figure \ref{fig:attractionn3}, we visualize the results of the tensor power iteration in the cases $d=6$ and $d=7$.
\begin{figure}[h]
\centering
\subfigure[$d=6$]{
  \includegraphics[width=.4\linewidth]{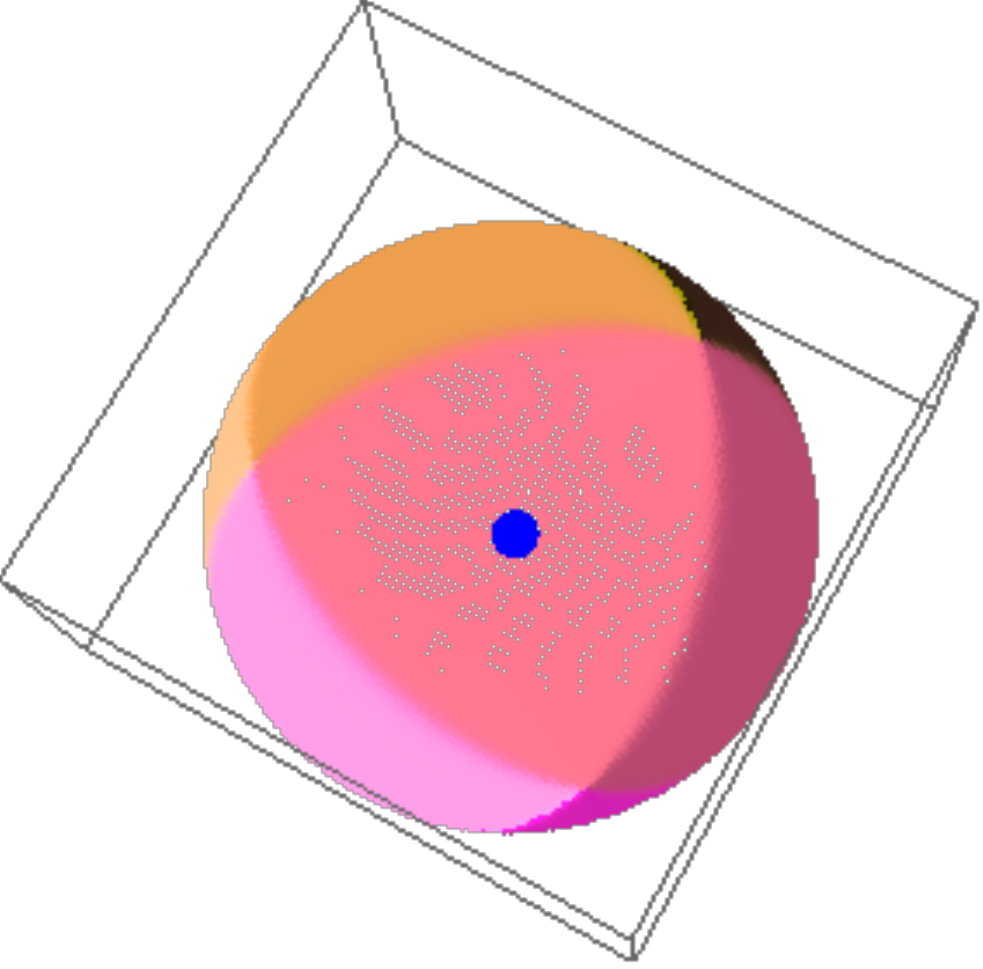}
}
\subfigure[$d=7$]{
  \includegraphics[width=.4\linewidth]{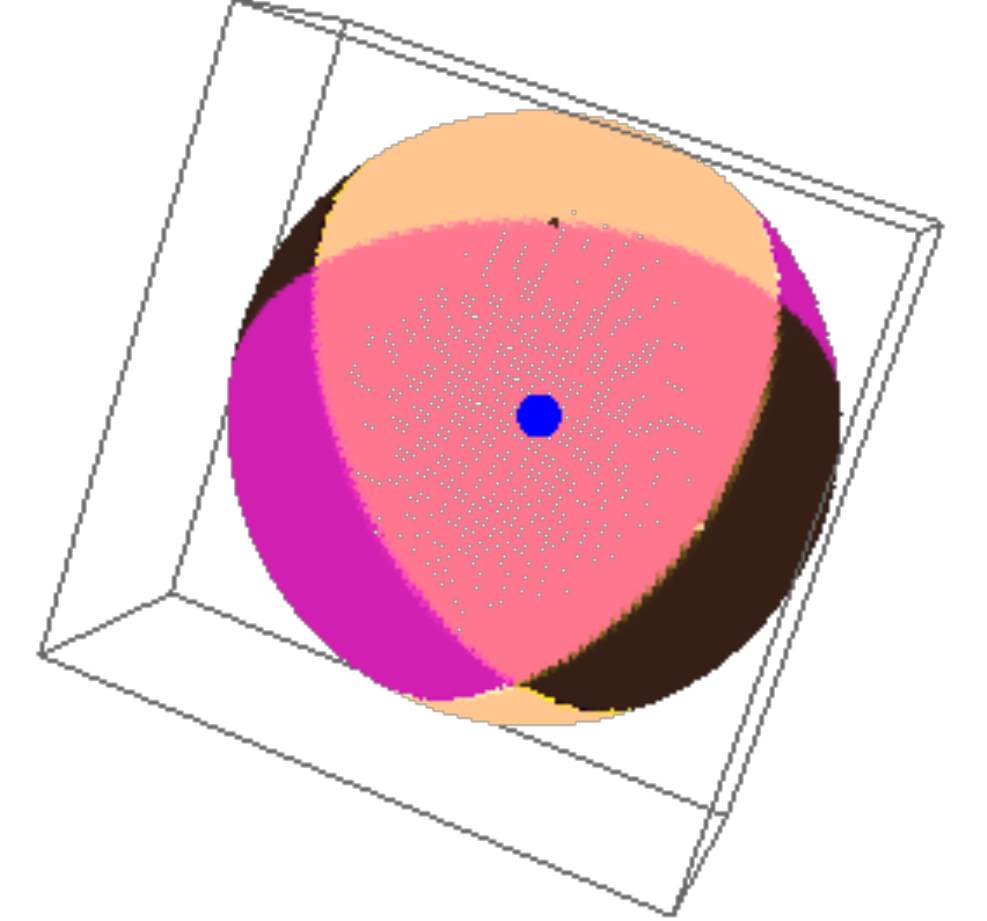}
}
\caption{Attraction points for $n=3$ and $d\in\{6,7\}$}
\label{fig:attractionn3}
\end{figure}
In each scenario, we can observe several regions of convergence which correspond to the robust eigenvectors. The blue dot in the raspberry-coloured region is the position of the eigenvector $\mathbf v_1$. Each eigenvector $\mathbf{v}_k$ corresponds to two regions, as ${-}\mathbf v_k$ is also an eigenvector.

As a second numerical experiment, we computed the spectral radii of the Jacobians $\varphi'(\mathbf x)$ at all normalized eigenvectors $\mathbf x$ of the regular simplex tensor $\mathcal T$ for a variety of mode parameters $d$. The results can be found in Table \ref{tab:experiments}. As we can see, there are non-robust eigenvectors even for small values of $d$. Moreover, we can observe that if $d$ is odd, there are always three eigenvectors with eigenvalue zero (cf. Theorem \ref{theo:eigenpairsn3}(ii)).

\begin{table}[h]
    \centering
    \begin{tabular}{|c|c|c|c|}
    \hline
         $d$ & $\mathbf{s} = (s_{1},s_{2})$ & $\rho( \varphi ' (\mathbf{s}))$ & $\text{comment}$ \\ \hline
         \rule{0pt}{0.5\normalbaselineskip}
         \multirow{7}{*}{$d=3$} 
         & $(0,0)$ & $2$ & not robust\\
         & $(1,0)$ & $2$ & not robust\\
         & $(0,1)$ & $2$ & not robust\\
         & $(\frac{1}{2},0)$ & $--$ & not defined as $\mu = 0$\\[0.5ex]
         & $(0, \frac{1}{2})$ & $--$ & not defined as $\mu = 0$\\[0.5ex]
         & $(\frac{1}{2},\frac{1}{2})$ & $--$ & not defined as $\mu = 0$\\[0.5ex]
         & $(\frac{1}{3},\frac{1}{3})$ & $2$ &  not robust\\[0.5ex]
         \hline
         \rule{0pt}{0.8\normalbaselineskip}
         \multirow{10}{*}{$d=4$} 
         & $(0,0)$ & $\frac{5}{7}$ & robust\\[0.5ex]
         & $(1,0)$ & $\frac{5}{7}$ & robust\\[0.5ex] 
         & $(0,1)$ & $\frac{5}{7}$ & robust\\[0.5ex] 
         & $(\frac{1}{2},0)$ & $5$ & not robust\\
         & $(0, \frac{1}{2})$ & $5$ & not robust\\
         & $(\frac{1}{2},\frac{1}{2})$ & $5$ & not robust\\[0.5ex]
         & $(\frac{1}{3},\frac{1}{3})$ & $\frac{5}{7}$ & robust\\[0.5ex]
         & $(\frac{1}{2},\frac{1}{4})$ & $\frac{5}{2}$ & not robust\\[0.5ex]
         & $(\frac{1}{4},\frac{1}{2})$ & $\frac{5}{2}$ & not robust\\[0.5ex]
         & $(\frac{1}{4},\frac{1}{4})$ & $\frac{5}{2}$ & not robust\\[0.5ex]
         \hline
         \rule{0pt}{0.8\normalbaselineskip}
         \multirow{7}{*}{$d=5$} 
         & $(0,0)$ & $\frac{3}{10}$ & robust\\[0.5ex]
         & $(1,0)$ & $\frac{3}{10}$ & robust\\[0.5ex] 
         & $(0,1)$ & $\frac{3}{10}$ & robust\\[0.5ex] 
         & $(\frac{1}{2},0)$ & $--$ & not defined as $\mu = 0$\\[0.5ex]
         & $(0, \frac{1}{2})$ & $--$ & not defined as $\mu = 0$\\[0.5ex]
         & $(\frac{1}{2},\frac{1}{2})$ & $--$ & not defined as $\mu = 0$\\[0.5ex]
         & $(\frac{1}{3},\frac{1}{3})$ & $\frac{3}{10}$ & robust\\[0.5ex]
         \hline 
         \rule{0pt}{0.8\normalbaselineskip}
         \multirow{10}{*}{$d=6$} 
         & $(0,0)$ & $\frac{7}{61}$ & robust\\[0.5ex]
         & $(1,0)$ & $\frac{7}{61}$ & robust\\[0.5ex] 
         & $(0,1)$ & $\frac{7}{61}$ & robust\\[0.5ex]
         & $(\frac{1}{2},0)$ & $7$ & not robust\\
         & $(0, \frac{1}{2})$ & $7$ & not robust\\
         & $(\frac{1}{2},\frac{1}{2})$ & $7$ & not robust\\
         & $(\frac{1}{3},\frac{1}{3})$ & $\frac{7}{61}$ & robust\\[0.5ex]
         & $(\frac{1}{2},\frac{1}{4})$ & $\frac{7}{2}$ & not robust\\[0.5ex]
         & $(\frac{1}{4},\frac{1}{2})$ & $\frac{7}{2}$ & not robust\\[0.5ex]
         & $(\frac{1}{4},\frac{1}{4})$ & $\frac{7}{2}$ & not robust\\[0.5ex]
         \hline
         \rule{0pt}{0.8\normalbaselineskip}
         \multirow{7}{*}{$d=7$} 
         & $(0,0)$ & $\frac{4}{91}$ & robust\\[0.5ex]
         & $(1,0)$ & $\frac{4}{91}$ & robust\\[0.5ex]
         & $(0,1)$ & $\frac{4}{91}$ & robust\\[0.5ex]
         & $(\frac{1}{2},0)$ & $--$ & not defined as $\mu = 0$\\[0.5ex]
         & $(0, \frac{1}{2})$ & $--$ & not defined as $\mu = 0$\\[0.5ex]
         & $(\frac{1}{2},\frac{1}{2})$ & $--$ & not defined as $\mu = 0$\\[0.5ex]
         & $(\frac{1}{3},\frac{1}{3})$ & $\frac{4}{91}$ & robust\\[0.5ex]
         \hline 
         \multicolumn{4}{@{}p{11cm}}{}
    \end{tabular}
    \caption{Numerical experiments for $n=3$ and mode numbers $d \in \{3,4,5,6,7\}$}
    \label{tab:experiments}
\end{table}

\section{Conclusion}
In this work we have been concerned with the eigenstructure analysis of real symmetric tensors. In the special case of regular simplex tensors, where all weights $\lambda_k$ in the symmetric decomposition \eqref{eq:symmdecompintro} are equal and the $\mathbf v_k$ are induced by $n+1$ equiangular vectors in $\mathds R^n$, we have seen that some normalized eigenpairs are attractive fixed points with respect to the tensor power iteration, whereas others are repelling. Therefore, as long as the tensor power iteration is used without modification, some eigenvectors may not be detectable numerically. These observations induce several directions of further research.

On the one hand, maintaining the viewpoint of the orthodox tensor power iteration, it would be a natural generalization to study symmetric tensors whose symmetric decomposition \eqref{eq:symmdecompintro} uses different weights $\lambda_k$ and/or is induced by a set of more than $n+1$ equiangular vectors $\mathbf v_k$ or even a generic tight frame. Here, one might aim at a complete characterization of those symmetric tensors which do have repelling eigenvectors, or whose normalized eigenvectors are given by the vectors $\mathbf v_k$ alone. Partial answers to the latter question have been given in \cite{MRU21}. Furthermore, the case of complex-valued tensors seems to be open and nontrivial, because even the existence and construction of equiangular tight frames are delicate tasks in large dimensions (cf. \cite{CRT08,SU07}).

On the other hand, algorithmic modifications could be employed to recover those eigenvectors that are non-robust under the orthodox tensor power iteration. Shifted tensor power iterations have been considered in \cite{KM11}, with good success, including a characterization of which normalized eigenpairs can and cannot be found numerically by such a scheme. But further modifications seem necessary to obtain a full numerical tensor eigenvalue solver, which, however, would exceed the scope of this paper.



\section*{Acknowledgments}
The authors would like to thank the University of Siegen for enabling our computations through the OMNI cluster. JS was supported by the Deutsche Forschungsgemeinschaft (DFG, German Research Foundation, project numbers 447948357 and 440958198), the Sino-German Center for Research Promotion (Project M-0294), the ERC (Consolidator Grant 683107/TempoQ) and the House of Young Talents of the University of Siegen.

\clearpage

\printbibliography
\end{document}